\documentclass{amsart}

\usepackage{amsmath, amsthm, amsfonts, amssymb}
\usepackage{color}
\usepackage{float}
\usepackage{graphicx}
\usepackage{verbatim}
\usepackage{wrapfig}
\usepackage[all]{xy}
\usepackage{hyperref} %links internos
\hypersetup{colorlinks=true, linkcolor=blue, citecolor=green, filecolor=red, urlcolor=green}
\usepackage{lipsum}
\usepackage{subcaption} % subfiguras

\usepackage{tikz-cd}
\usetikzlibrary{arrows}
\tikzset{>=latex}

                     % the natural numbers
\newcommand{\Z}{\mathbb{Z}}                     % the integer numbers
                     % the rational numbers
\newcommand{\R}{\mathbb{R}}                     % the real line

\newcommand{\C}{\mathbb{C}}

\renewcommand{\sl}{{\rm sl}}

\newcommand{\F}{\mathcal{F}}
\newcommand{\U}{\mathcal U}
\renewcommand{\S}{\mathcal S}

\newcommand{\J}{\mathcal{J}}
 
\newcounter{newcounter}[section]

\numberwithin{equation}{section}
\numberwithin{newcounter}{section}
\numberwithin{figure}{section}
\numberwithin{footnote}{section}

%\newcounter{newcounter}[section]

\numberwithin{equation}{section}
\numberwithin{newcounter}{section}
\numberwithin{figure}{section}
\numberwithin{footnote}{section}

\newtheorem{corollary}[newcounter]{Corollary}
\newtheorem{definition}[newcounter]{Definition}

\newtheorem{lemma}[newcounter]{Lemma}
\newtheorem{proposition}[newcounter]{Proposition}
\newtheorem{remark}[newcounter]{Remark}
\newtheorem{theorem}[newcounter]{Theorem}

%\newtheorem{theorem}{Theorem}[section]
%\theoremstyle{lemma}
%\newtheorem{lemma}[theorem]{Lemma}
%\theoremstyle{corollary}
%\newtheorem{corollary}[theorem]{Corollary}
%\theoremstyle{proposition}
%\newtheorem{proposition}[theorem]{Proposition}
%\theoremstyle{definition}
%\newtheorem{definition}[theorem]{Definition}
%\newtheorem{remark}[theorem]{Remark}
%\newtheorem{example}[theorem]{Example}
%\newtheorem{xca}[theorem]{Exercise}
%\theoremstyle{remark}
%\newtheorem{remark}[theorem]{Remark}
%\numberwithin{equation}{section}

\usepackage{tikz-cd}

\usetikzlibrary{arrows}
\tikzset{>=latex}
\usepackage{pgfplots}
\pgfplotsset{compat=1.7}

\usepgfplotslibrary{fillbetween}
\usetikzlibrary{patterns}

\usepackage{pstricks-add}
\usepackage{pst-func}

\usetikzlibrary{positioning}

\usepackage{amsfonts,amsthm,amssymb,latexsym,amsmath,setspace,amscd,verbatim}

\begin{document}
 
\title[Transverse foliations for mechanical systems]{Transverse foliations for two-degree-of-freedom mechanical systems}
%{Weakly convex foliations for energies slightly above critical values in mechanical systems}

\author[N. V. de Paulo]{Naiara V. de Paulo}
 \address{Naiara V. de Paulo, Universidade Federal de Santa Catarina, Departamento de Matem\'atica, Rua Marechal Rondon, 880, CEP 89065-200, Bairro Salto do Norte, Blumenau SC, Brazil}
 \email {naiara.vergian@ufsc.br}

\author[S. Kim]{Seongchan Kim}
  \address{Seongchan Kim, Department of Mathematics Education, Kongju National University, Gongju 32588, Republic of Korea}
  \email {seongchankim@kongju.ac.kr}

\author[Pedro A. S. Salom\~ao]{Pedro A. S. Salom\~ao}
\address{Pedro A. S. Salom\~ao, Shenzhen International Center for Mathematics, Southern University of Science and Technology, Shenzhen, China}
\email{psalomao@sustech.edu.cn}

\author[A. Schneider]{Alexsandro Schneider}
 \address{A. Schneider, Universidade Estadual do Centro-Oeste, Rua Camargo Varela de Sá, 03, Guarapuava PR, Brazil, 85040-080}
 \email {alexsandro@unicentro.br}

%\author[A. Vanderlinde]{Andr\'e Vanderlinde}
% \address{Van}
% \email {Van}

 \subjclass[2020]{Primary  37J55; Secondary  53D35}

\begin{abstract} 
We investigate the dynamics of a two-degree-of-freedom mechanical system for energies slightly above a critical value. The critical set of the potential function is assumed to contain a finite number of saddle points. As the energy increases across the critical value, a disk-like component of the Hill region gets connected to other components precisely at the saddles. Under certain convexity assumptions on the critical set, we show the existence of a weakly convex foliation in the region of the energy surface where the interesting dynamics takes place. The binding of the foliation is formed by the index-$2$ Lyapunov orbits in the neck region about the rest points and a particular index-$3$ orbit.  Among other dynamical implications, the transverse foliation forces the existence of periodic orbits, homoclinics, and heteroclinics to the Lyapunov orbits. We apply the results to the H\'enon-Heiles potential for energies slightly above  $1/6$. We also discuss the existence of transverse foliations for decoupled mechanical systems, including the frozen Hill's lunar problem with centrifugal force, the Stark problem, the Euler problem of two centers, and the potential of a chemical reaction.

\end{abstract}

\maketitle
\tableofcontents

\section{Introduction and main results}\label{sec:introd}

We study the dynamics of a mechanical system
\begin{equation}\label{mec_sys}
\ddot{x}= - \nabla V(x), \;\; x\in \R^2,
\end{equation}
where the potential $V$ is smooth.  Let $x(t)$ be a solution of \eqref{mec_sys}, and let $y(t):=\dot x(t)$. Then  $(x(t),y(t))\in \R^4$ solves Hamilton's equations for the Hamiltonian 
$H(x,y)= \frac{|y|^2}{2} + V(x)$, that is
\begin{equation*}
\label{eq_ham}
\begin{aligned}
    \dot{x} =  \dfrac{\partial H}{\partial y}(x,y), \quad \quad \dot{y} =-  \dfrac{ \partial H}{\partial x}(x,y).
\end{aligned}
\end{equation*}
We fix the energy $E\in \R$ and
 study the dynamics of the three-dimensional energy surface $H^{-1}(E)$. The projection of $H^{-1}(E)$  to the $x$-plane is the Hill region  
$\Omega_E=  \{V(x) \leq E\}\subset \R^2,$  and its boundary $\partial \Omega_E$ is the zero-velocity curve. 

Let $v\in V^{-1}(0)$ be a critical point of $V.$ Then $p=(v,0)\in H^{-1}(0)$ is an equilibrium point of the Hamiltonian flow of $H$.  If $v$ is a saddle point, then $p$  is a saddle-center equilibrium, i.e.,~the linearization $J_0 \mathcal{H}(p)$  admits a pair of real eigenvalues $\pm \alpha$ and a pair of purely imaginary eigenvalues $\pm i\omega$. Here, $J_0$ is the complex matrix $$J_0=\left(\begin{array}{cc}0 &  I\\ -I & 0 \end{array} \right),$$  and $\mathcal{H}$ is the Hessian  of $H$. For every  $E>0$ sufficiently small, $H^{-1}(E)$ has a unique index-$2$ hyperbolic orbit $P_{2,E}$ in the neck region about $p,$ called Lyapunov orbit. 
For the definition of the Conley-Zehnder index of a periodic orbit, see section \ref{def:CZ}.
The projection of $P_{2, E}$ to $\Omega_E$ is a simple arc in the neck region around $v$ connecting distinct points of the zero-velocity curve $\partial \Omega_E$, i.e., $P_{2,E}$ is a brake orbit.

We are particularly interested in the case where $V$ has precisely $l\geq 1$ saddle points $v_1,\ldots,v_l\in V^{-1}(0)$, which are vertices of a simple closed curve $\partial K_0 \subset V^{-1}(0)$ bounding a disk-like compact region $K_0\subset \{V\leq 0\}$. Except for the saddles, the points of $\partial K_0$ are regular, and $V<0$ in $K_0 \setminus \partial K_0.$ In that case, $K_0$ is the projection of a singular sphere-like subset  $S_0\subset H^{-1}(0)$, with precisely $l$ singularities of saddle-center type at $p_i=(v_i,0), i=1, \ldots, l.$  As the energy $E$ changes from negative to positive, a sphere-like subset  $S_E \subset H^{-1}(E)$, $E<0$, projecting to the interior of $K_0$, gets connected to other components at $p_i$'s for $E=0$. For $E>0$ small, the neck region around each $p_i$ contains a Lyapunov orbit $P_{2, E}^i$. This orbit bounds a pair of disks,  forming a two-sphere $\S_i\subset H^{-1}(E)$ that locally separates $H^{-1}(E)$. The flow is transverse to both disks in opposite directions.  See section \ref{sec_rescaling} for a more detailed description. The energy surface $H^{-1}(E), E>0,$ thus contains a compact subset,  bounded by  $\cup_i \S_i$ and also denoted by $S_E$, which is diffeomorphic to a three-sphere with $l$ disjoint open balls removed.
The left images in Figures \ref{fig:hh_proj}, \ref{fig:frozen}, and \ref{fig:stark} represent the projections of $S_E$ to their Hill regions in concrete examples.

In general, the dynamics of $S_E$ is rather complicated, combining trajectories that escape $S_E$ through some $\S_i$ forward and/or backward in time, with invariant subsets presenting rich dynamics. We aim to study the complex dynamics of $S_E$ via transverse foliations.  

\begin{definition}\label{def:weakly_convex_foliation} Let $S_E \subset H^{-1}(E)$ be the compact subset bounded by $\cup_i\S_i$  as above.   A  weakly convex foliation adapted to $S_E$ is a singular foliation $\mathcal{F}_E$ of $S_E$ satisfying the following properties:
    \begin{itemize}
 \item[(i)] The singular set $\mathcal{P}_E$  of $\mathcal{F}_E$ consists of the Lyapunov orbits $P_{2,E}^1, \ldots, P_{2,E}^l$ and an unknotted periodic orbit $P_{3,E}$ of index $ 3$. These $l+1$ periodic orbits form the binding of $\mathcal{F}_E$ and are called binding orbits.    
 
 \item[(ii)] The complement $S_E \setminus \cup_{P \in \mathcal{P}_E}P$ is foliated by properly embedded planes and cylinders, called regular leaves of $\F_E$. They consist of: a)  $l$ pairs of (rigid) planes $U_{1,E}^j,U_{2,E}^j\subset \S_j,j=1,\dots,l,$ each pair asymptotic to $P_{2,E}^j$; b) $l$ (rigid) cylinders $V_E^j,j=1,\ldots,l,$ each one asymptotic to $P_{3,E}$ and to $P_{2,E}^j$; c) $l$ families of planes $D_{\tau,E}^j,\tau \in (0,1), j=1,\ldots,l$, all asymptotic to $P_{3,E}$. The family $D_{\tau,E}^j$ breaks onto the closure of $U_{1,E}^j\cup V^j_E$ as $\tau \to 0^+$,  and onto the closure of $U_{2,E}^{j+1} \cup V_E^{j+1}$ as $\tau \to 1^-$. Here, we use the convention $l+1 = 1$.  

\item[(iii)] All regular leaves are transverse to the flow. 
\end{itemize}
See the cases $l=1$ and $l=2$ in Figure \ref{fig:weak}.
\end{definition}

  A natural question is under which conditions $S_E$ admits a weakly convex foliation. A motivation for this question comes from the works of Albers, Fish, Frauenfelder, Hofer, and van Koert on the restricted three-body problem \cite[section 2]{albers2012global}. A weakly convex foliation with the Lyapunov orbit near the first Lagrange point as a binding orbit is expected to exist for energies slightly above the first Lagrange value. Note that a weakly convex foliation implies the existence of homoclinics and/or heteroclinics to the Lyapunov orbits,  see \cite{hofer2003finite} and also \cite{Weakconvex,dePaulo_Salomao2}.

  \begin{figure}[!ht]
    \includegraphics[width=0.95\textwidth]{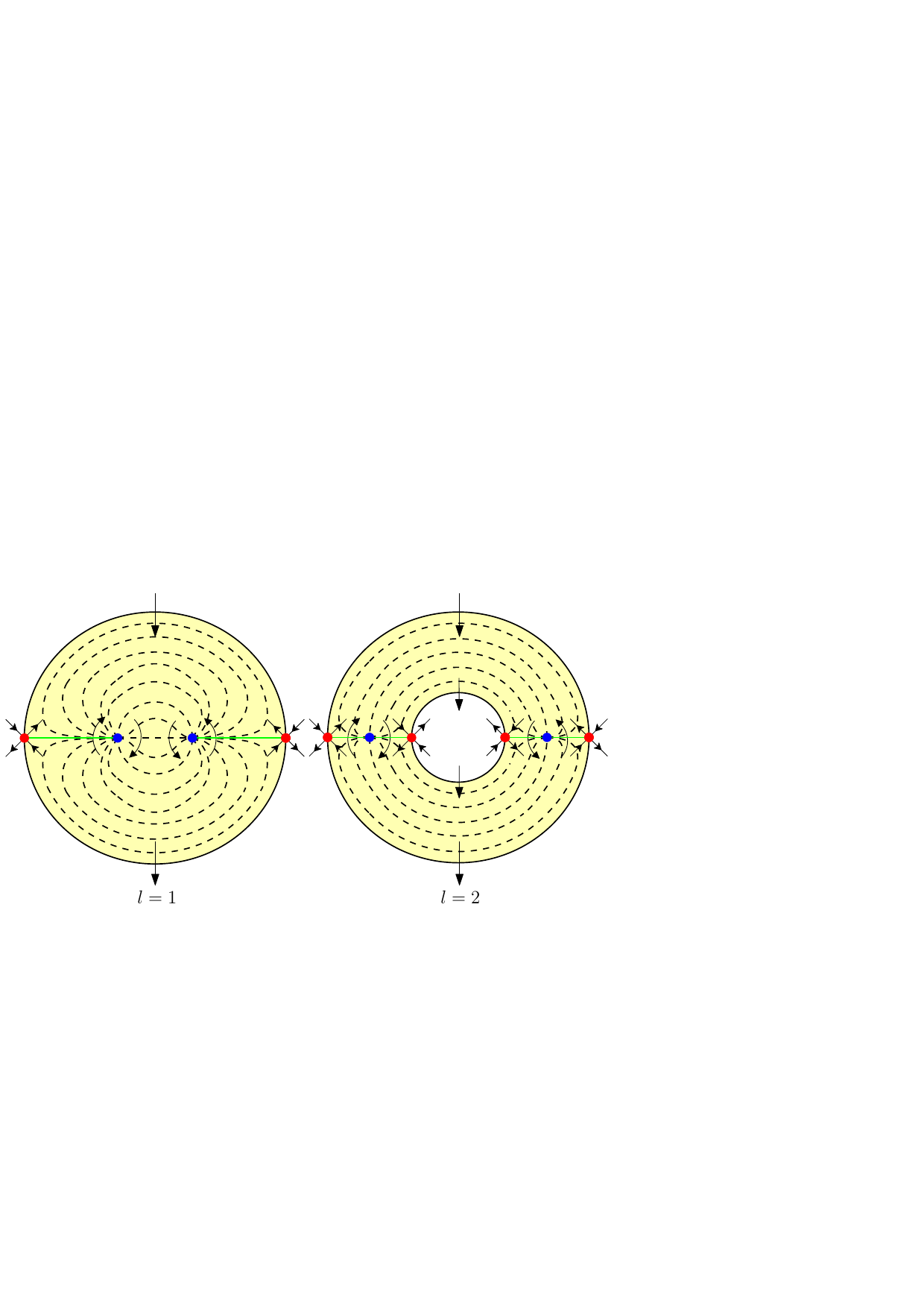}
    \caption{A cross section of a weakly convex foliation with $l=1$ (left) and $l=2$ (right). The red dots represent the Lyapunov orbits, and the blue dots represent index-$3$ orbits.  The black and green curves represent rigid planes and cylinders, respectively. The dashed curves represent the one-parameter families of planes asymptotic to the index-$3$ orbits.
    }
    \label{fig:weak}
\end{figure}
  
  Finding a weakly convex foliation is, in general, quite challenging.  Here, we point out some issues. First, it is not known for an arbitrary Hamiltonian $H$ whether the energy surface has contact type. Such a condition enables the use of $J$-holomorphic curves in symplectizations, as developed by Hofer, Wysocki, and Zehnder in \cite{Hoferinitial,hofer1996characterisation, properties_1,hhofer1999characterization,properties_3}. 
  Secondly, the compact subset $S_E\subset H^{-1}(E)$ may admit low index orbits other than the Lyapunov orbits, and these orbits may obstruct the existence of a weakly convex foliation. %A last motivation is that a weakly convex foliation, as in Definition \ref{def:weakly_convex_foliation}, implies the existence of homoclinics and/or heteroclinics to the Lyapunov orbits. 

  In general, results concerning the existence of transverse foliations for Reeb flows require the periodic orbits to be non-degenerate. This is the case, for instance, in the pioneering work by Hofer, Wysocki, and Zehnder \cite{hofer2003finite} where such foliations are obtained on the tight 3-sphere. Although the non-degenericity condition is $C^\infty$-generic, it is typically very difficult to  check it in concrete examples. Here we do not assume that the flow is non-degenerate. 
  
 In \cite{dePaulo_Salomao, dePaulo_Salomao2}, the authors found weakly convex foliations assuming that the critical subset $S_0\subset H^{-1}(0)$ is strictly convex and has a unique singularity ($l=1$). Such foliations are projections of the so-called finite energy foliations in the symplectization of the energy surface. Fish and Siefring \cite{FS} studied the connected sum of finite energy foliations and constructed weakly convex foliations. See also \cite{Spatial, kim2024transverse,Lemos}. Wendl \cite{wendl2008finite} constructed finite energy foliations for overtwisted contact three-manifolds. Finite energy foliations were first studied on star-shaped hypersurfaces of $\R^4$ by Hofer, Wysocki, and Zehnder \cite{hofer1998dynamics, hofer2003finite}.    Colin, Dehornoy, and Rechtman introduced in \cite{CDR} the broken book decompositions which generalize the transverse foliations obtained by projections of finite energy foliations.

  Many concrete examples in the literature -- including most of those arising in Celestial Mechanics -- do not satisfy the geometric condition of strict convexity assumed in  \cite{dePaulo_Salomao} and, moreover, several of them present more than one saddle-center equilibrium point in the critical level. Heading toward overcoming this difficulty, the present paper extends results in \cite{dePaulo_Salomao} for $l\geq2$ under weaker assumptions on the critical set $S_0\subset H^{-1}(0)$. To explain it,  observe that $S_0$ is invariant by the flow, and its rest points $p_i=(v_i,0),i=1,\ldots,l,$ are saddle-center equilibrium points.   
   Here we assume that:
   \begin{itemize}
   \item[H1.] {\em $S_0$ is dynamically convex, i.e.,~every (non-constant) periodic orbit in $S_0$ has index at least $3$.} 
   \item[H2.] {\em $S_0$  admits a positive frame, i.e.,~the transverse linearized flow on $S_0\setminus \{p_1,\ldots,p_l\}$ strictly rotates counterclockwise on a suitable symplectic frame.}
   \end{itemize}

   Conditions H1 and H2 hold if the singular set $S_0$ is strictly convex, as proved in \cite{dePaulo_Salomao,clodoaldo_e_pedro, hofer1998dynamics}, see Remark \ref{rmk:strictlyconvex} below. These conditions may hold for more general systems, such as the H\'enon-Heiles potential, even though $S_0$ is not strictly convex. 
   
   The main result of this paper states that if $S_0$ satisfies conditions H1 and H2, then a weakly convex foliation adapted to  $S_E \subset H^{-1}(E)$ exists for every $E>0$ sufficiently small. 

\begin{theorem}\label{main_theorem} 
Let  $V$ be a real-analytic potential on $\R^2$, and let $H=|y|^2/2+V(x)$. Assume that $V$ admits precisely $l\geq 1$ saddle points $v_1,\ldots,v_l\in \partial K_0 \subset \{V= 0\}$  as above, so that the singular sphere-like subset $S_0\subset H^{-1}(0)$ projecting to  $K_0\subset \{V\leq 0\}$ 
satisfies conditions H1 and H2.  Then, for every $E>0$ sufficiently small, $H^{-1}(E)$ contains a compact subset $S_E$ admitting a weakly convex foliation $\F_E$ as in  Definition \ref{def:weakly_convex_foliation}. The binding orbits are the Lyapunov orbits $P_{2,E}^1, \ldots, P_{2,E}^l\subset \partial S_E$ and an index-$3$ periodic orbit $P_{3,E}\subset S_E \setminus \partial S_E$. Moreover, the projection $K_E\subset \R^2$ of $S_E$ to the $x$-plane converges in the Hausdorff topology to $K_0$ as $E \to 0^+$. If the actions of the Lyapunov orbits in $S_E$ coincide, then $S_E$ carries infinitely many periodic orbits, and every Lyapunov orbit has infinitely many homoclinics or heteroclinics to another Lyapunov orbit.
\end{theorem}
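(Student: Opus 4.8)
The plan is to realize $\F_E$ as the projection of a finite energy foliation in the symplectization of a contact form carried by a compactification of $S_E$, and then to read off the dynamical conclusions from the transversality of the leaves to the Hamiltonian flow. I would organize the argument into a local analysis near the saddle-centers, a global construction of pseudoholomorphic curves, a compactness/breaking step, and a final return-map argument.

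First I would carry out the rescaling near the critical level anticipated in section \ref{sec_rescaling}. Working in a Birkhoff--Moser normal form for each saddle-center equilibrium $p_i$, I would exhibit, for every small $E>0$, the Lyapunov orbit $P_{2,E}^i$, check that its Conley--Zehnder index (section \ref{def:CZ}) equals $2$, and confirm that it is a brake orbit bounding the two disks inside $\S_i$ across which the flow is transverse in opposite directions. Computing the transverse linearized dynamics of these disks should produce the two rigid planes $U_{1,E}^i,U_{2,E}^i$ of Definition \ref{def:weakly_convex_foliation}(ii)(a) together with their asymptotics at $P_{2,E}^i$. The same rescaling yields the Hausdorff convergence $K_E\to K_0$, since the projection of $S_E$ is trapped between $K_0$ and a neighborhood collapsing onto $K_0$ as $E\to 0^+$.

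Next comes the contact geometry. The Liouville field $y\,\p_y$ makes $\lambda=y\,dx$ a contact form on $H^{-1}(E)$ away from the zero-velocity curve $\{y=0\}$, where it degenerates; I would therefore pass to the doubled/regularized picture, reflecting across $\p\Omega_E$, where the brake orbits become honest periodic orbits and $S_E$ is capped off to a tight three-sphere $\tilde S_E$ carrying a genuine contact form $\lambda_E$. Conditions H1 and H2 should translate into $\lambda_E$ being dynamically convex, with $\cz\ge 3$ for all closed orbits other than the Lyapunov orbits and with a global positive frame, while H3 keeps any spurious index-$2$ orbit from surviving; Remark \ref{rmk:strictlyconvex} and the results of \cite{dePaulo_Salomao, clodoaldo_e_pedro, hofer1998dynamics} are the inputs here. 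On $\tilde S_E$ I would then apply the finite energy foliation theory of Hofer, Wysocki and Zehnder \cite{hofer1998dynamics, hofer2003finite} to obtain an unknotted index-$3$ binding orbit $P_{3,E}$ and its family of rigid planes, and incorporate the $l$ necks through the connected-sum construction of Fish and Siefring \cite{FS}, so that each neck contributes the rigid cylinder $V_E^j$ from $P_{3,E}$ to $P_{2,E}^j$ and the HWZ planes become the families $D_{\tau,E}^j$. Positive transversality of the holomorphic curves to the Reeb flow gives property (iii) upon projection. I expect this step to be the main obstacle: one must prove that as $\tau\to 0^+$ or $\tau\to 1^-$ the planes $D_{\tau,E}^j$ cannot stay smooth but break, via Hofer/SFT compactness, into exactly the buildings whose pieces are the closures of $U_{1,E}^j\cup V_E^j$ and $U_{2,E}^{j+1}\cup V_E^{j+1}$, and one must exclude every other limit building (in particular any carrying an unexpected low-index orbit). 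This exclusion is precisely where H1--H3, intersection theory, and automatic transversality for the relevant holomorphic curves must be combined.

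Finally, for the dynamical conclusion I would use the leaves of $\F_E$ as global surfaces of section. The rigid cylinders $V_E^j$ and the broken families partition $S_E$ into invariant regions on which the flow crosses each regular leaf monotonically, reducing the dynamics to return maps whose action drift is governed by the differences of the Lyapunov orbit actions. Under the hypothesis that these actions coincide the net drift vanishes, so a Poincaré-type argument on the broken families forces the stable and unstable manifolds of the Lyapunov orbits to meet, producing a homoclinic or heteroclinic to another Lyapunov orbit; an intersection/shadowing argument in the spirit of \cite{hofer2003finite, Weakconvex, dePaulo_Salomao2} then manufactures infinitely many such connections and infinitely many periodic orbits, completing the proof.
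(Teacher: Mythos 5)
Your overall strategy (contact form, finite energy foliation in the symplectization, projection to the energy surface) is the right one, but there are concrete gaps. The most serious is your compactification step. Reflecting/doubling across the zero-velocity curve $\partial\Omega_E$ does not do what you need: the brake orbits, in particular the Lyapunov orbits, are already honest periodic orbits of the flow on $H^{-1}(E)$, and the obstruction to applying a three-sphere result is not the degeneration of $y\,dx$ at $\{y=0\}$ but the fact that $S_E$ is a sphere with $l$ open balls removed, open through the necks. Doubling such a manifold along its $l$ boundary spheres does not produce $S^3$ for $l\geq 2$. The paper instead caps the \emph{potential} beyond each saddle (Lemma \ref{lem:sl}, Proposition \ref{prop_capping}), creating $l$ small auxiliary wells so that $H^{-1}(E)$ acquires a genuine sphere-like component $W_E$; the degeneration of $y\,dx$ at $y=0$ is then handled separately by interpolating Liouville vector fields (Proposition \ref{prop:transverseLiouville}). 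This capping creates a further obligation you do not address: since the Hamiltonian is modified outside $K_0$, one must prove that the rigid planes asymptotic to the Lyapunov orbits project into the \emph{unmodified} region, so that the chamber they bound lies in the original $H^{-1}(E)$. That is the entire point of the explicit normal form for $\lambda_E$ near the saddle-centers and the explicit $J_E$-holomorphic planes of Proposition \ref{prop_rigid_planes}; your suggestion that "the transverse linearized dynamics of the disks" produces the rigid planes conflates the local separating disks with holomorphic leaves of a foliation for a specific $J$.

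Two further gaps. First, the existence result you need on the tight three-sphere (Theorem \ref{thm:weakconv}, from \cite{Weakconvex}) requires verifying that every index-$3$ orbit is unlinked with every Lyapunov orbit and that the Lyapunov orbits have minimal action. The unlinking is the hardest local input of the paper: one shows via the Moser normal form that any trajectory passing close to a saddle-center accumulates an arbitrarily large rotation of the transverse linearized flow (Lemma \ref{lem_deltatheta}, Proposition \ref{prop_indicealto}), so any orbit linked with a Lyapunov orbit must cross the local separating disks and hence has arbitrarily large index. Your proposal never verifies this hypothesis, nor the action minimality, nor does it supply the limiting argument by which H1 and H2 yield weak convexity of the capped surface (Proposition \ref{prop_weakconv}); "H1 and H2 should translate into dynamical convexity" is an assertion, not a proof. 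Second, you propose to re-derive the breaking of the families $D^j_{\tau,E}$ by SFT compactness and to exclude all other buildings yourself, correctly identifying this as the main obstacle but leaving it entirely open; the paper sidesteps this by quoting Theorem \ref{thm:weakconv} as a black box once its hypotheses are checked. As written, your argument would not close.
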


Conditions H1 and H2 imply the following weaker condition: 
   \begin{itemize}
       \item[H3.]  {\em For every $E>0$ sufficiently small, the energy surface $H^{-1}(E)$ carries no index-$2$ orbit near $S_0$ other than the Lyapunov orbits.}
    \end{itemize}

\begin{remark} The conclusions of Theorem \ref{main_theorem} also hold under the weaker conditions H1 and H3.   Nevertheless, with the aim of stating a result with broader applicability to concrete examples, we formulate the main theorem under assumptions H1 and H2.
\end{remark}

\begin{remark} The real-analyticity condition on the potential function $V$ in Theorem \ref{main_theorem} ensures the existence of suitable local coordinates near the saddle-center critical points of $H$, which are important for obtaining ideal index estimates in the neck regions of $S_E$ (as explored in Section \ref{sec:weak_conv}). In addition, the global real-analyticity of the Hamiltonian $H$ is relevant in \cite{Weakconvex, dePaulo_Salomao2} for deriving  dynamical consequences from the weakly convex foliation $\F_E$ when the actions of the Lyapunov orbits coincide.
\end{remark}

The proof of Theorem \ref{main_theorem} explores the fact that the energy surfaces of a mechanical system have contact type, and thus, the Hamiltonian flow is equivalent to a Reeb flow. Recall that a contact form on a three-manifold $M$ is a $1$-form $\lambda$ so that $\lambda \wedge d\lambda$ never vanishes. The Reeb vector field $R$ of $\lambda$ is the unique vector field on $M$  determined by $d\lambda(R,\cdot)=0$ and $\lambda(R)=1$. The tangent plane distribution $\xi=\ker \lambda$ is the contact structure, and the pair $(M,\xi)$ is a contact manifold.  In canonical coordinates $(x_1,x_2,y_1,y_2)\in \R^4$, the standard tight contact structure on $S^3=\{x_1^2+x_2^2+y_1^2+y_2^2=1\}$ is $\xi_0=\ker \lambda_0\subset TS^3$, where $\lambda_0$ is the contact form given by the restriction of the Liouville form $\frac{1}{2}\sum x_i dy_i - y_idx_i$ to $S^3$. Given $f\colon  S^3 \to (0,+\infty)$, the Reeb flow of $\lambda:=f\lambda_0$ is equivalent to the Hamiltonian flow on the star-shaped hypersurface  $\sqrt{f}S^3 \subset \R^4$.  We say that $\lambda$ is weakly convex if its periodic orbits have an index of at least $2$. A crucial ingredient in the proof of Theorem \ref{main_theorem} is the following key result from \cite{Weakconvex}.

\begin{theorem}[de Paulo, Hryniewicz, Kim, and  Salom\~ao \cite{Weakconvex}]\label{thm:weakconv}
Let $\lambda=f\lambda_0$ be a weakly convex contact form on the tight three-sphere $(S^3,\xi_0)$. Assume that $\lambda$ admits precisely $l\geq 1$ index-$2$ periodic orbits $P_{2,1},\ldots, P_{2,l}$, and they are all unknotted, mutually unlinked, hyperbolic, and have self-liking number $-1$. Assume that the actions of $P_{2, i}$ are smaller than that of any other periodic orbit and that every index-$3$ orbit is unlinked with any $P_{2, i}$.
Then the Reeb flow of $\lambda$ admits a weakly convex foliation $\F$ in the following sense: each $P_{2, i}$ is a binding orbit and bounds a pair of rigid planes $U_{1, i}, U_{2, i}$  whose closures form a regular embedded two-sphere $\mathcal{S}_i\subset S^3$. The remaining binding orbits are index-$3$ orbits $P_{3,j},j=1,\ldots,l+1,$ contained in distinct components $\mathcal{U}_j,j=1,\ldots,l+1,$ of $S^3 \setminus \cup_i \mathcal{S}_i$. The closure of $\U_j$ is called a chamber, and the foliation $\F$ restricted to a chamber is as in Definition \ref{def:weakly_convex_foliation}. 
\end{theorem}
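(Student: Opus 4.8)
The plan is to realize $\F$ as the projection to $S^3$ of a finite energy foliation, in the sense of Hofer, Wysocki, and Zehnder, living in the symplectization $(\R\times S^3, d(e^a\lambda))$. First I would fix a compatible almost complex structure $J\in\J(\lambda)$, chosen generically near the few configurations where transversality is not automatic; in the four-dimensional symplectization the embedded finite energy planes and cylinders relevant here carry the Fredholm index needed for automatic transversality, so the moduli spaces are smooth manifolds of the expected dimension. Throughout, I would control the curves via positivity of intersections and Siefring's asymptotic intersection theory, which in this dimension promote homological disjointness to geometric disjointness, hence to the foliation property.

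\emph{Rigid planes and separating spheres.} Each index-$2$ orbit $P_{2,i}$ is hyperbolic, unknotted, and has self-linking number $-1$. By the asymptotic analysis of Hofer, Wysocki, and Zehnder, a finite energy plane asymptotic to such an orbit has Fredholm index $1$ and is therefore rigid modulo $\R$-translation. I would show that $P_{2,i}$ bounds exactly two such rigid planes $U_{1,i},U_{2,i}$ whose projections close up along $P_{2,i}$ into a $C^1$-embedded two-sphere $\S_i\subset S^3$ that locally separates. Since the $P_{2,i}$ are mutually unlinked, the spheres $\S_i$ are pairwise disjoint, so that $S^3\setminus\cup_i\S_i$ has exactly $l+1$ components $\U_j$.

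\emph{Chambers, Bishop families, and the index-$3$ orbit.} To foliate a chamber $\overline{\U_j}$ I would produce, via a Bishop-type family of pseudoholomorphic disks together with a continuation/degree argument, a one-parameter family of embedded finite energy planes $D_{\tau}$, $\tau\in(0,1)$. Each such plane is asymptotic to a common periodic orbit which, by weak convexity (no orbit of index at most $1$) and the action ordering, must be an index-$3$ orbit $P_{3,j}$; a plane asymptotic to an index-$3$ orbit has Fredholm index $2$, consistent with a $1$-parameter family. Positivity of intersections makes the $D_\tau$ mutually disjoint, so they sweep out the interior of the chamber as in Definition \ref{def:weakly_convex_foliation}(ii). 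Letting $\tau\to0^+,1^-$ and applying SFT compactness, the planes degenerate into holomorphic buildings; the action hypothesis makes the $P_{2,i}$ the lowest-action orbits, and together with the unlinking of $P_{3,j}$ with the $P_{2,i}$ this forces each limiting building to be a rigid cylinder $V^j$ asymptotic to $P_{3,j}$ and to some $P_{2,i}$, glued to the corresponding rigid plane $U_{1,i}$ (resp.\ $U_{2,i+1}$). This realizes the breaking of $D_\tau$ onto $\overline{U_{1,i}\cup V^j}$ and $\overline{U_{2,i+1}\cup V^{j+1}}$. Finally I would verify that every regular leaf is transverse to the Reeb flow, since the Reeb vector field is positively transverse to the contact planes tangent to the projected curves, and that all leaves assemble into the singular foliation $\F$.

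\emph{Main obstacle.} The crux is the compactness and degeneration analysis: I must prove that the $\tau$-families break exactly into the advertised buildings and no others. This requires ruling out nodal degenerations, multiply covered limits, and the escape of an asymptotic limit across a separating sphere, each exclusion resting on the action ordering, on weak convexity, and on the unlinking of index-$3$ orbits with the $P_{2,i}$. Most delicate is controlling the combinatorics of the breaking --- that the same index-$3$ orbit $P_{3,j}$ occurs at both ends of the $\tau$-family and matches consistently across adjacent chambers under the convention $l+1=1$ --- which is where uniqueness of asymptotic limits and the global intersection theory carry the decisive weight.
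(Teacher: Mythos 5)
The paper itself contains no proof of this statement: Theorem \ref{thm:weakconv} is imported wholesale from \cite{Weakconvex}, and the only internal commentary is Remark \ref{rem_fef}, which records that the foliation is the projection of a finite energy foliation by embedded $J$-holomorphic curves for $J$ in a generic subset $\J_{\text{reg}}(\lambda)$. At the level of architecture your outline matches the cited proof exactly --- rigid index-$1$ planes closing up into separating spheres at the hyperbolic index-$2$ orbits, one-parameter families of index-$2$ planes asymptotic to an index-$3$ orbit in each chamber, and SFT breaking onto cylinder-plus-plane buildings controlled by the action ordering, unlinking hypotheses, and Siefring's intersection theory.

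Two of your steps are, however, named rather than supplied, and one is mis-sourced. A Bishop family requires a totally real boundary condition and belongs to the original Hofer--Wysocki--Zehnder characterization of the tight $S^3$; in a symplectization with a generic adapted $J$ there is no such boundary, and in the finite-energy-foliation framework the families $D_\tau$ and their index-$3$ asymptotic limits $P_{3,j}$ are outputs of the global construction (Fredholm theory and gluing starting from the rigid objects), not of a Bishop/degree argument --- indeed the existence of the $P_{3,j}$ is part of the conclusion, so you cannot presuppose an orbit for the family to limit on. Likewise, the existence of exactly two rigid planes per $P_{2,i}$ is nontrivial: positivity of intersections and $\sl=-1$ give uniqueness-type control but not existence; in this paper's application that existence is secured separately by the explicit construction of Proposition \ref{prop_rigid_planes} for special $J$ near the saddle-centers, and in \cite{Weakconvex} by the corresponding analysis there. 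Finally, the compactness and breaking combinatorics you flag as the ``main obstacle'' --- excluding other buildings, matching the same $P_{3,j}$ at both ends of the $\tau$-family, consistency across adjacent chambers --- is precisely the substance of \cite{Weakconvex}. So your text is a correct program aligned with the cited proof, but its decisive steps coincide with the theorem being cited rather than establishing it.
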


\begin{remark}\label{rem_fef} The transverse foliation given in Theorem \ref{thm:weakconv} is the projection to $S^3$ of a finite energy foliation $\tilde \F$ in the symplectization $(\R \times S^3, d(e^a\lambda)),$ where $a$ is the $\R$-coordinate. An $\R$-invariant almost complex structure $J$ on $\R \times S^3$ is adapted to $\lambda$ if $J\cdot \partial_a = R$ and $J \cdot \xi_0 =\xi_0$ is $d\lambda$-compatible. The set of such $J$'s, denoted by $\J(\lambda)$, is contractible in the $C^\infty$-topology. The leaves of $\tilde \F$ are the image of embedded finite energy $J$-holomorphic curves $\tilde u=(a,u)\colon  \C P^1 \setminus \Gamma \to \R \times S^3$, where $\Gamma \subset \C P^1$ is finite, see section \ref{sec:holocurves} for definitions.  An important fact in the proof of Theorem \ref{main_theorem} is that for special choices of $J$, there exist precisely two finite energy $J$-holomorphic planes asymptotic to each $P_{2, i}$ through opposite directions. Moreover, there exists a generic subset  $\J_{\text{reg}}(\lambda) \subset \J(\lambda)$ so that if $J\in \J_{\text{reg}}(\lambda)$, then $\R \times S^3$ admits a finite energy foliation by $J$-holomorphic curves whose projection to $S^3$ is a weakly convex   foliation as in Theorem \ref{thm:weakconv}, see \cite[Theorem 5.3]{Weakconvex}.    
\end{remark}

Let us sketch out in a few brief sentences the main ideas of the proof of Theorem \ref{main_theorem}.
The potential $V$ can be modified away from an arbitrarily small neighborhood of $K_0$ in such a way that the new critical set $\hat H^{-1}(0)$ projects to the union of $K_0$ and $l$ mutually disjoint disk-like compact domains $K_1,\ldots, K_l\subset \R^2$. 
 Each $K_i$ is
connected to $K_0$ through the saddle point $v_i$ which is the unique singularity of $K_i$. For $E>0$ sufficiently small, $\hat H^{-1}(E)$ contains a regular sphere-like hypersurface $W_E$ whose projection contains $K_0\bigcup  \cup_{i=1}^l K_i$ and whose flow is equivalent to the Reeb flow of a contact form $\lambda_E$ on the tight three-sphere. 
Conditions H1 and H2 imply that we can choose $\hat H$ so that for every $E>0$ sufficiently small, $\lambda_E$ is weakly convex and the $l$ Lyapunov orbits $P_{2, E}^1,\ldots, P_{2, E}^l$ are the only index-$2$ orbits in $W_E$. A thorough study of the linearized flow near the saddle-center equilibria, using the real-analiticity of $V$, shows that the longer a periodic orbit stays near a saddle-center, the larger its index is. In particular, for $E>0$ sufficiently small, any periodic orbit in $W_E$ linking with some Lyapunov orbit must stay an arbitrarily long time near the saddle-center and thus have an arbitrarily large index. In particular, index-$3$ orbits in $W_E$ are unlinked with all the Lyapunov orbits if $E>0$ is sufficiently small. Finally, the action of each Lyapunov orbit goes to zero as $E\to 0$, and they are the shortest periodic orbits in $W_E$.  Thus, Theorem \ref{thm:weakconv} gives a weakly convex foliation $\F$ for $\lambda_E$, which is the projection of a finite energy foliation in the symplectization of $W_E$. For some special choices of almost complex structure $J$, the foliation   $\F$ admits a chamber $S_E$ bounded by all the pairs of rigid planes asymptotic to the Lyapunov orbits, which is contained in the original energy surface $H^{-1}(E)$ and projects near $K_0$. The desired weakly convex foliation given in Theorem \ref{main_theorem} is the restriction of $\F$ to $S_E$.

We apply Theorem \ref{main_theorem} to many mechanical systems. The H\'enon-Heiles potential and the frozen Hill’s lunar problem with centrifugal force meet the assumptions on the critical level and admit weakly convex foliations for energies slightly above the critical value. We also consider decoupled mechanical systems and describe the weakly convex foliations obtained from certain gradient flow lines. They include the Stark problem, the Euler problem of two centers, and a chemical reaction model.

The paper is organized as follows. In section \ref{sec_basic}, we review the basic properties of Reeb dynamics on contact-type hypersurfaces in $\R^4$ and pseudo-holomorphic curves. In section \ref{sec_proof_main}, we establish the main steps in the proof of Theorem \ref{main_theorem} as follows: in section \ref{subsec:compact}, we modify the potential away from $K_0$ to obtain, for $E>0$ sufficiently small, a sphere-like energy surface $W_E$. The weak convexity of $W_E$ and the unlinking condition between index-$2$ and index-$3$ periodic orbits are proved in section \ref{sec:weak_conv}.  In section \ref{sec:liouville}, we establish the contact property of $W_E$ and construct a particular contact form $\lambda_E$ satisfying certain suitable normal form, used in section \ref{subsec:holocurves} to define an almost complex structure on $\R \times W_E$, which admits a pair of finite energy planes asymptotic to each Lyapunov orbit and whose projections to $W_E$ are contained in the unchanged region of the potential. We then gather all these results to prove Theorem \ref{main_theorem} in section \ref{subsec_proof}. The applications are left to section \ref{sec_applications}.

\section{Basics} \label{sec_basic}

Let us introduce some basic facts about the geometry of energy surfaces in $\R^4$. 

\subsection{The quaternion frame} Let $(x_1,x_2,y_1,y_2)$ be coordinates in $\R^4$ and let $\omega_0:=dy_1 \wedge dx_1 + dy_2 \wedge dx_2$ be the standard symplectic form on $\R^4$.  Let $j_0:=I,j_1,j_2,j_3 \colon \R^4 \to \R^4$ be the orthogonal maps defined by
\begin{equation}
\label{eq_quaternion}
\begin{aligned}
j_1 \cdot (a_1,a_2,b_1,b_2) &=   (b_2,-b_1,a_2,-a_1)\\ 
j_2 \cdot (a_1,a_2,b_1,b_2) & = (a_2,-a_1,-b_2,b_1)\\ 
j_3 \cdot (a_1,a_2,b_1,b_2) & = (b_1,b_2,-a_1,-a_2) 
  \end{aligned}
\end{equation}
Let $H\colon \R^4 \to \R$ be a Hamiltonian, and let $w\in S_E:=H^{-1}(E)$ be a regular point of $H$. Let $X_0(w) = \frac{\nabla H(w)}{|\nabla H(w)|}$ be the unit vector normal to $S_E$ at $w$. The vectors
 \begin{equation}\label{eq:generalvecx}
 X_i(w):= j_i \cdot X_0(w) , \quad i=1,2,3,
\end{equation}
span the tangent space $T_wS_E$ and $X_3(w)$ is parallel to the Hamiltonian vector field $X_H$, determined by $i_{X_H} \omega_0 = -dH$. Also, $\{X_1, X_2\}$ is a symplectic basis of a plane transverse to $X_H$.

Let $w(t)\in S_E$ be a nonconstant solution of the Hamiltonian flow, that is, $w(t)$ satisfies $\dot{w}(t)=j_3 \cdot \nabla H(w(t)), \forall t$, and let $u(t)=\alpha_1(t)X_1(w(t))+\alpha_2(t)X_2(w(t)) + \alpha_3(t)X_3(w(t))$  be a linearized solution along $w(t)$,  that is, $u(t)\in T_{w(t)}S_E$ satisfies $\dot{u}(t)=j_3\cdot \mathcal{H}(w(t))u(t), \forall t$. Here, $\mathcal{H}$ denotes the Hessian of $H$. Denoting by $\alpha=(\alpha_1,\alpha_2)^T$ the transverse linearized solution in the frame $\{X_1,X_2\}$, we have
\begin{equation}\label{eq:transflow} 
\dot{\alpha} =\begin{pmatrix} 0 &  -1 \\  1 & 0 \end{pmatrix} S(t) \alpha, \quad \quad S(t): = \begin{pmatrix}   \kappa_{11}  + \kappa_{33} &  \kappa_{12} \\ \kappa_{12} &  \kappa_{22}+ \kappa_{33}    \end{pmatrix} \bigg|_{w(t)},
\end{equation}
where  $\kappa_{ij}:= \left \langle \mathcal{H} \cdot X_i, X_j \right \rangle,  i,j = 1,2,3.
 $   See, for example, \cite[Proposition D.1]{dPSsurvey}.

Let $\theta(t)$ be a continuous argument of $\alpha_1(t) + i \alpha_2(t)\in \R^+e^{i\theta(t)}$. Then
\begin{equation}\label{eq:varangle}
\begin{aligned}
\dot\theta  & = \frac{ \alpha_1 \dot{\alpha}_2 - \alpha_2 \dot{\alpha}_1}{\lvert \alpha \rvert^2} = \frac{\alpha^T S(t) \alpha }{\lvert \alpha \rvert^2}=  (\cos \theta \ \sin \theta) S(t) (\cos \theta \ \sin \theta)^T\\
& = (\kappa_{11} + \kappa_{33})\cos^2 \theta +2\kappa_{12}\cos \theta \sin \theta + (\kappa_{22} + \kappa_{33})\sin^2 \theta.
\end{aligned}
\end{equation}

\begin{lemma}\cite[Theorem 5]{clodoaldo_e_pedro}\label{lema:convexpositive}
If the Hessian $\mathcal{H}$ of  $H$ restricted to $T_{w(t)}S_E$ is positive-definite along the non-constant trajectory $w(t)\in S_E$, then a continuous argument $\theta(t)$ of any non-trivial transverse linearized solution $0\neq \alpha(t)\equiv \alpha_1(t) + i\alpha_2(t) \in \R^+e^{i\theta(t)}$ of  \eqref{eq:transflow} satisfies $\dot \theta(t) >0,  \forall t$. In particular, the frame $\{X_1, X_2\}$ is positive along $w(t)$. 
\end{lemma}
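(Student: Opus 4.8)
The plan is to reduce the statement to the positive-definiteness of the $2\times 2$ symmetric matrix $S(t)$ appearing in \eqref{eq:transflow}, and then to deduce that from the hypothesis by an elementary computation with $2\times 2$ and $3\times 3$ principal minors. First I would observe that the second line of \eqref{eq:varangle} exhibits $\dot\theta$ as the quadratic form $\alpha^T S(t)\alpha/\lvert\alpha\rvert^2$ evaluated on the transverse linearized solution; writing $u=(\cos\theta,\sin\theta)^T$ we have $\dot\theta = u^T S(t)\, u$ with $\lvert u\rvert =1$. Since every unit vector of $\R^2$ arises in this way, the assertion ``$\dot\theta>0$ for all $t$ and all nontrivial $\alpha$'' is \emph{equivalent} to ``$S(t)$ is positive-definite for every $t$''. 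Thus the whole lemma collapses to a linear-algebra statement at each fixed time.

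Next I would identify the restricted Hessian. Because $\{X_1,X_2,X_3\}$ is the orthonormal frame of $T_{w}S_E$ obtained by applying the orthogonal quaternionic maps $j_1,j_2,j_3$ to the unit normal $X_0$, the restriction of $\mathcal H$ to $T_w S_E$ is represented in this frame by the symmetric $3\times 3$ matrix $K=(\kappa_{ij})_{i,j=1,2,3}$, where $\kappa_{ij}=\langle \mathcal H X_i,X_j\rangle$. The hypothesis is precisely that $K$ is positive-definite. By Sylvester's criterion this gives, in particular, $\kappa_{11},\kappa_{22},\kappa_{33}>0$ together with the leading $2\times 2$ minor $\kappa_{11}\kappa_{22}-\kappa_{12}^2>0$.

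Finally I would verify the two conditions for $S(t)$ to be positive-definite. Its trace is $\mathrm{tr}\,S=\kappa_{11}+\kappa_{22}+2\kappa_{33}>0$, and expanding the determinant yields
$$\det S=(\kappa_{11}+\kappa_{33})(\kappa_{22}+\kappa_{33})-\kappa_{12}^2=(\kappa_{11}\kappa_{22}-\kappa_{12}^2)+\kappa_{33}(\kappa_{11}+\kappa_{22}+\kappa_{33})>0,$$
each summand being positive by the previous step. Hence $S(t)$ is positive-definite for every $t$, so $\dot\theta>0$ follows from the reduction in the first paragraph. Since a strictly positive $\dot\theta$ means the transverse linearized flow rotates strictly counterclockwise in the frame $\{X_1,X_2\}$, this frame is positive along $w(t)$, completing the argument.

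As for the main obstacle, there is little of substance: the only point demanding a moment's care is that $S$ is \emph{not} a principal submatrix of $K$, but rather the leading $2\times 2$ block of $K$ with the positive quantity $\kappa_{33}$ added to each diagonal entry. Recognizing that this diagonal shift by $\kappa_{33}>0$ preserves — indeed strictly improves — positive-definiteness, exactly as the determinant computation makes explicit, is the crux; everything else is bookkeeping.
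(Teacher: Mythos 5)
Your proposal is correct and follows essentially the same route as the paper: both reduce the claim to the positive-definiteness of $S(t)$ and verify it via $\mathrm{tr}\,S>0$ and $\det S=(\kappa_{11}\kappa_{22}-\kappa_{12}^2)+\kappa_{33}(\kappa_{11}+\kappa_{22}+\kappa_{33})>0$. The only cosmetic difference is that you invoke Sylvester's criterion for the minor $\kappa_{11}\kappa_{22}-\kappa_{12}^2>0$, whereas the paper derives it directly from the positivity of $\langle\mathcal{H}(X_1-\lambda X_2),X_1-\lambda X_2\rangle$ for all $\lambda$.
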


\begin{proof}
    Since $\mathcal{H}$ restricted to $T_{w(t)}S_E$ is positive-definite  along $w(t)$, we have $\kappa_{ii}>0, \forall i=1,2,3.$ Hence $\text{tr} (S(t))=\kappa_{11}+\kappa_{22}+2\kappa_{33}>0, \forall t.$ Since 
    $$\left\langle \mathcal{H}\cdot (X_1-\lambda X_2),X_1-\lambda X_2 \right \rangle|_{w(t)} = \kappa_{22}\lambda^2-2\kappa_{12}\lambda  + \kappa_{11} >0 \quad \forall \lambda\in \R,$$  we obtain $\kappa_{11}\kappa_{22}-\kappa_{12}^2>0$. This implies $\det(S(t))=\kappa_{11}\kappa_{22}-\kappa_{12}^2+\kappa_{33}(\kappa_{11}+\kappa_{22}+\kappa_{33})>0, \forall t,$ proving that $S(t)$ is positive-definite and thus $\dot \theta(t)>0, \forall t,$ see \eqref{eq:varangle}. 
\end{proof}

\begin{remark}\label{rmk:strictlyconvex}
As a consequence of Lemma \ref{lema:convexpositive}, if the critical subset $S_0 \subset H^{-1}(0)$ is strictly convex, then it admits a positive frame in the sense of condition H2 in the introduction. We say that a sphere-like subset $S_0\subset H^{-1}(0)$ admitting a finite number of saddle-centers is strictly convex if it bounds a convex subset in $\R^4$ and the Hessian $\mathcal{H}$ restricted to the tangent space at any regular point of $S_0$ is positive definite. In this particular case, it also follows from \cite[Proposition 4.7]{dePaulo_Salomao} and \cite[Theorem 3.4]{hofer1998dynamics} that $S_0$ satisfies condition H1 concerning its dynamical convexity. It turns out that if $S_0$ is strictly convex, then it satisfies the assumptions of our main result  (Theorem \ref{main_theorem}).  
\end{remark}

For a mechanical Hamiltonian $H = \frac{1}{2}|y|^2 + V(x)$, let $x\in \R^2$ be the projection of a regular point  $w\in S_E = H^{-1}(E)$. Then
\begin{equation}\label{eq_quaternionic_mechanical}
 \begin{aligned}
 X_{0} & = g \cdot (V_{ x_{1}}, V_{ x_{2}}, y_{1}, y_{2}), \quad 
 & X_{1}  = g \cdot (y_{2}, -y_{1}, V_{ x_{2}}, -V_{ x_{1}}), \\
 X_{2} & = g \cdot (V_{x_{2}}, -V_{x_ {1}}, -y_{2},  y_{1}), \quad
 & X_{3}  = g \cdot (y_{1}, y_{2}, -V_{x_ {1}}, -V_{ x_{2}}),
 \end{aligned}
 \end{equation}
 where  $V_{x_i}=\partial_{x_i}V$ and
$g =(V_{x_1}^{2}+V_{x_2}^{2}+y_{1}^{2}+y_{2}^{2})^{-1/2}.$
Then 
\begin{equation}\label{eq:kappa}
\begin{aligned}
\kappa_{11} & = g^{2}\cdot (  V_{x_1x_1}y_{2}^{2}-2V_{x_1x_2}y_{1}y_{2}+V_{x_2x_2}y_{1}^{2}+V_{x_1}^{2}+V_{x_2}^{2}), \\
\kappa_{12} & = g^{2}\cdot (  V_{x_1x_1}y_{2}V_{x_2}-V_{x_1x_2}V_{x_1}y_{1}V_{x_2}-V_{x_1x_2}y_{2}V_{x_1}\\ & \;\; \; \;+  V_{x_2x_2}y_{1}V_{x_1}-V_{x_2}y_{2}-V_{x_1}y_{1}), \\
\kappa_{22}& = g^{2}\cdot ( V_{x_1x_1}V_{x_2}^{2}-2V_{x_1x_2}V_{x_1}V_{x_2}+V_{x_2x_2}V_{x_1}^{2}+y_{1}^{2}+y_{2}^{2}), \\
\kappa_{33} & = g^{2}\cdot ( V_{x_1x_1}y_{1}^{2}+2V_{x_1x_2}y_{1}y_{2}+V_{x_2x_2}y_{2}^{2}+V_{x_1}^{2}+V_{x_2}^{2}),
\end{aligned}
\end{equation}
where  $V_{x_ix_j}=\partial_{x_ix_j}V.$
If $y_1=y_2=0$, then
\begin{equation}\label{kappaii}
\kappa_{11}=\kappa_{33}=1, \; \kappa_{12}=0 \  \mbox{ and }   \ \kappa_{22}=\frac{V_{x_1x_1}V_{x_2}^2-2V_{x_1x_2}V_{x_1}V_{x_2} + V_{x_2x_2}V_{x_1}^2}{V_{x_1}^2+V_{x_2}^2}.
\end{equation}
Hence
$\text{tr}(S)=3 + \kappa_{22}$ and $\det(S) = 2 + 2\kappa_{22}.$
In particular, the following implications are valid   
\begin{equation}\label{kappa22}
\kappa_{22}>-1 \quad \Rightarrow \quad S \mbox{ is positive-definite } \quad \Rightarrow \quad \dot \theta >0,
\end{equation}
provided $y_1=y_2=0.$
The implications above will be useful in the H\'enon-Heiles potential to show that the singular sphere-like subset of the critical energy surface admits a positive frame even though it is not strictly convex. See Proposition \ref{prop_hh2}.

\subsection{The Conley-Zehnder index}\label{def:CZ}

We can use the quaternion frame to define the index of a periodic orbit  $P=(w, T)\subset H^{-1}(E) \subset \R^4$. Let $\theta(t)$ be a continuous argument of a  non-vanishing solution $\alpha(t)\equiv \alpha_1(t) + i \alpha_2(t)$  of   \eqref{eq:transflow}.  
 Let
\begin{equation}\label{eq:vaiinterval}
I:= \left\{ \frac{\theta(T) - \theta(0)}{2\pi} \mid  \alpha(0) \neq 0 \right\}. 
\end{equation}
Then, $I$ is a compact interval, and its length is less than $1/2$. Given $\varepsilon >0$ sufficiently small, let
$I_\varepsilon := I - \varepsilon.$
Then the index of  $P$
\begin{equation*}\label{eq:defCZ}
\mu(P):=\begin{cases} 2k+1 & \text{ if } I_\varepsilon  \subset (k, k+1), \\ 2k & \text{ if } k \in {\rm int}(I_\varepsilon), \end{cases}
\end{equation*}
 is independent of $\epsilon>0$ sufficiently small.
If $S_E$ has contact-type with contact form $\lambda_E$, then $\mu(P)$ coincides with the (generalized) Conley-Zehnder index  \cite{hofer1998dynamics}, see also Long's book \cite{long2012index}. Indeed, let  $\xi \subset T S_E$  be the contact structure. Every  $v \in \xi$  can be written in the quaternion frame as $v = a_1 X_1 + a_2 X_2 + a_3X_3$ for some $a_1, a_2 , a_3 \in \R$.  Recall that $X_3$ is parallel to  $X_H.$ 
We define the isomorphism
$$
\pi_\xi \colon  \xi  \to  {\rm span}\{X_1, X_2 \}, \quad v =  \alpha_1 X_1 + \alpha_2 X_2 + \alpha_3X_3 \mapsto \alpha_1 X_1 + \alpha_2 X_2.
$$
 Set $\tilde X_j := \pi_{\xi}^{-1}(X_j),$ $j=1,2,$ so that    $\xi = \text{span}\{\tilde X_1,\tilde X_2\}$. 
 Since $X_3\subset \ker \omega_0|_{TS_E}$, we compute
$\omega_0 ( \tilde X_1, \tilde X_2) = \omega_0 ( X_1, X_2)=1.$ Hence, the frame $\{\tilde X_1,\tilde X_2\}$ induces a symplectic trivialization  $\Psi \colon \xi \to S_E \times \C$
$$
\Psi: \alpha_1 \tilde X_1 + \alpha_2 \tilde X_2 \mapsto \alpha_1+i\alpha_2,
$$
and  the linearized flow on $\xi$  is determined by \eqref{eq:transflow}. 

Denoting by $P^k=(w,kT)$ the $k$-th iterate of $P=(w,T),$ the rotation number of $P$ is the well-defined limit
$$
\rho(P) = \lim_{k\to +\infty} \frac{\mu(P^k)}{2k}.
$$

\subsection{Self-linking number}  
Let $\gamma\colon \R/ \Z \to M$ be a null-homologous transverse knot on a contact three-manifold $(M, \xi=\ker \lambda)$, i.e., $T\gamma \cap \xi =0$. Choose a Seifert surface $i\colon\Sigma \hookrightarrow M$ for $\gamma$, i.e., $\Sigma$ is an embedded oriented surface bounded by $\gamma$, and take a non-vanishing section $X$ of  $i^*\xi$. Assume that $M$ is oriented by  $\lambda \wedge d \lambda >0$, $\gamma$ is oriented by $\lambda>0$, and $\Sigma$ is oriented by  $\gamma=\partial \Sigma$. 
Let $\text{exp}$ be an exponential map on $M$. If $X$ is sufficiently small, then   $\gamma(\R/\Z) \cap \gamma_X (\R/\Z) = \emptyset,$
where  $\gamma_X(t) := \exp_{\gamma(t)}X(t), \forall t \in \R/\Z$.  The self-linking number of $\gamma$, denoted  by $\sl(\gamma)$, is defined as the algebraic intersection number  $\gamma_X\cdot i$, where $\gamma_X$ inherits the orientation of $\gamma$. Notice that $\sl(\gamma)$ is independent of $X$ and $\text{exp}$. If the Euler class $e(\xi)$ vanishes on $H_2(M)$, then $\sl(\gamma)$ is  independent of $\Sigma$.

\subsection{Pseudo-holomorphic curves in symplectizations}\label{sec:holocurves}
 
Let $(M,\xi = \ker \lambda)$ be a contact three-manifold, and let $(\mathbb{R}\times M, d(e^{a}\lambda))$ be its symplectization, where $a$ is the $\R$-coordinate. Denote by $\pi_\xi\colon TM\to  \xi$ the projection along the Reeb vector field  $R$. As above, a periodic orbit of $\lambda$ is a pair $P=(x,T)$, where $x$ is a periodic trajectory of the Reeb flow of $\lambda$ and $T>0$ is a period of $x$. We identify periodic orbits with the same image and period. Let $\mathcal{J} (\lambda)$ be the space of $\R$-invariant $d\lambda$-compatible almost complex structures $J$ on $\R \times M$ satisfying $J \cdot \partial_a = R$ and $ {J}(\xi)=\xi.$
 Let $(S, j)$ be a closed Riemann surface and $\Gamma\subset S$ a finite set. A  map $\tilde{u}=(a,u)\colon S\setminus\Gamma\rightarrow\mathbb{R}\times M$ is called a finite energy $J$-holomorphic curve if 
\begin{equation}\label{eq_psedo_holom_1}
    \bar{\partial}_{J}(\tilde{u}):=\frac{1}{2}(d\tilde{u}+J(\tilde{u})\circ d\tilde{u}\circ j)=0,
\end{equation}
and its Hofer's energy is finite
\begin{equation*}
0<E(\tilde{u}):=\sup_{\phi\in\Lambda}\int_{S\setminus\Gamma}\tilde{u}^{*}d(\phi\lambda)<\infty.
\end{equation*}
Here, $\Lambda:=\{\phi\in C^{\infty}(\mathbb{R}, [0, 1]) \mid  \phi'\geq0\}$.
If $S=\C P^1$ and $\#\Gamma=1,$ then $\tilde u$ is called a finite energy plane. Near each point $z\in S$, one may consider holomorphic coordinates $s+it$ and rewrite  \eqref{eq_psedo_holom_1} as
\begin{equation}
\label{eq_psedo_holom_2}
\begin{cases}
    \pi_\xi u_s(s,t)+J(u(s,t))\pi_\xi u_t(s,t)=0,\\
\lambda(u_t(s,t))=a_s(s,t),\\
\lambda(u_s(s,t))=-a_t(s,t).
\end{cases}
\end{equation}

The elements in $\Gamma$ are called punctures. A puncture $z_0 \in \Gamma$ is called removable if $\tilde u$ can be smoothly extended over $z_0$.    Otherwise, we say that $z_0$ is non-removable.   In the following, we assume that every puncture is non-removable. 
Given $z_0 \in \Gamma, $ we choose a neighborhood $U_0 \subset S \setminus \Gamma$ of $z_0$ and a bi-holomorphism $\phi_{z_0} \colon ( \mathbb{D}, 0) \to (U_0,z_0)$ such that $\phi_{z_0}^* j = i$, where $\mathbb{D} \subset \C$ is the closed unit disk centered at $0$ and $i$ is the canonical complex structure on $\C$. In cylindrical coordinates $ [0, +\infty) \times \R /  \Z \ni (s,t) \mapsto \phi_{z_0}( e^{ -2\pi (s+it)})$ near $z_0$, we write 
$
\tilde u(s,t) = ( a(s,t), u(s,t)) = \tilde u \circ \phi_{z_0} ( e^{ -2\pi (s+it)}). $

The following result is due to Hofer establishing a deep connection between finite energy $J$-holomorphic curves and periodic Reeb orbits.

\begin{theorem}[Hofer \cite{Hoferinitial}]\label{thm_hofer}
Let $(s, t)\in [0,+\infty) \times \R / \Z$ be  cylindrical coordinates near a puncture $z_0\in \Gamma$ of a finite energy curve $\tilde u = (a,u) \colon S \setminus \Gamma \to \R \times M.$ Write $\tilde u(s,t)=(a(s,t),u(s,t))$ in cylindrical coordinates as above. Given a  sequence $s_{n}\rightarrow+\infty$, there exists a subsequence, still denoted by $s_n$, and a periodic orbit $P=(w, T)$ such that
$u(s_n, \cdot )\to w(\varepsilon  T\cdot)$ in $C^{\infty} (\R/ \Z,M)$ as $n \to +\infty.$
The sign $\varepsilon=\varepsilon(z_0) \in \{ \pm1  \}$ depends only on $z_0$.
\end{theorem}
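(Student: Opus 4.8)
The plan is to reduce the statement to the study of a finite energy half-cylinder and to run Hofer's bubbling-off analysis. Using the biholomorphism $\phi_{z_0}$ and the cylindrical coordinates $(s,t)\in[0,+\infty)\x\R/\Z$ I would regard $\tilde u=(a,u)$ as a map on the half-cylinder with the non-removable puncture pushed to $s=+\infty$, where the three equations \eqref{eq_psedo_holom_2} hold. First I would introduce the \emph{charge} of the puncture, $m:=\lim_{s\to+\infty}\int_{\{s\}\x\R/\Z}u^*\lambda$. Since the first line of \eqref{eq_psedo_holom_2} together with the $d\lambda$-compatibility of $J$ gives $u^*d\lambda\ge 0$ pointwise, Stokes' theorem shows that $s\mapsto\int_{\{s\}\x\R/\Z}u^*\lambda$ is nondecreasing, and finiteness of $E(\tilde u)$ bounds it; hence $m$ exists and the tail $d\lambda$-energy satisfies $\int_{[s_0,+\infty)\x\R/\Z}u^*d\lambda=m-\int_{\{s_0\}\x\R/\Z}u^*\lambda\to 0$ as $s_0\to+\infty$.

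The heart of the proof is a uniform gradient estimate: $\sup_{[s_0,+\infty)\x\R/\Z}|\nabla\tilde u|<\infty$ for some $s_0$, with the gradient taken in the $(s,t)$-coordinates and a fixed metric on $\R\x M$. I would argue by contradiction. If the gradient blows up along points $\zeta_k$ whose $s$-coordinate tends to $+\infty$, then Hofer's lemma (a soft argument in the complete metric space $[0,+\infty)\x\R/\Z$) lets me modify $\zeta_k$ and choose radii $R_k\to\infty$ so that the rescaled maps $\tilde v_k(z):=(a(\zeta_k+z/R_k)-a(\zeta_k),\,u(\zeta_k+z/R_k))$ have uniformly bounded gradient on balls exhausting $\C$. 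Elliptic bootstrapping for \eqref{eq_psedo_holom_1} together with Arzel\`a--Ascoli then produces a $C^\infty_{\mathrm{loc}}$-limit finite energy plane $\tilde v\colon\C\to\R\x M$ that is non-constant, since $|\nabla\tilde v(0)|=1$. But the $d\lambda$-energy is conformally invariant, so $\int_\C v^*d\lambda$ equals a limit of $d\lambda$-energies over small disks contained in the tail, which tend to $0$ by the previous paragraph; hence $\int_\C v^*d\lambda=0$. Then $\pi_\xi v_s\equiv\pi_\xi v_t\equiv 0$, and the remaining equations make $a$ a finite-Dirichlet-energy harmonic function on $\C$, forcing $\tilde v$ to be constant --- a contradiction. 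This yields the gradient bound and, in particular, shows that $a$ grows like $ms$ plus a bounded term.

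With the gradient bounded I would extract the asymptotic limit. For any $s_n\to+\infty$, apply Arzel\`a--Ascoli and elliptic estimates to the shifted maps $(s,t)\mapsto(a(s+s_n,t)-a(s_n,0),\,u(s+s_n,t))$ to obtain, along a subsequence, a $C^\infty_{\mathrm{loc}}$-limit $\tilde w=(b,w)$ solving \eqref{eq_psedo_holom_2} on $\R\x\R/\Z$. Because the tail $d\lambda$-energy vanishes, $\tilde w$ has zero $d\lambda$-energy, so $\pi_\xi w\equiv 0$; the image of each loop $t\mapsto w(s,t)$ therefore lies on a single Reeb trajectory, and \eqref{eq_psedo_holom_2} identifies $\tilde w$ with a trivial cylinder $(ms+b_0,\,x(mt+t_0))$ over a periodic Reeb orbit. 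Reading off $\int_{\R/\Z}w^*\lambda=m$ shows this orbit is $P=(w,T)$ with $T=|m|$ and that $u(s_n,\cdot)$ converges to a reparametrization $w(\varepsilon T\,\cdot)$ with $\varepsilon=\operatorname{sign}(m)$. Finally, $m\neq 0$: were $m=0$, the tail $d\lambda$-energy and $\int u^*\lambda$ would both vanish, the gradient estimate would upgrade to $|\nabla\tilde u|\to 0$, and the removable singularity theorem would extend $\tilde u$ smoothly over $z_0$, contradicting non-removability. Since $m$ is intrinsic to $z_0$, so is $\varepsilon$.

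The main obstacle I expect is precisely the uniform gradient bound near the puncture: a naive rescaling can lose energy, and Hofer's lemma is the essential device that produces an honest non-constant plane in the limit. Comparing the scale-invariant $d\lambda$-energy of this plane with the vanishing tail energy is what closes the argument, so the monotonicity of $s\mapsto\int_{\{s\}\x\R/\Z}u^*\lambda$ and the rigidity of zero-$d\lambda$-energy finite energy planes are the load-bearing facts.
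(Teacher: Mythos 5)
The paper offers no proof of this statement: it is quoted directly from Hofer's work \cite{Hoferinitial}, so there is nothing internal to compare against. Your sketch is a faithful reconstruction of Hofer's original argument --- the charge $m$ and the monotonicity of $s\mapsto\int_{\{s\}\times\R/\Z}u^*\lambda$, the vanishing of the tail $d\lambda$-energy, the uniform gradient bound via bubbling-off and Hofer's lemma, the rigidity of finite energy planes with zero $d\lambda$-energy, and the extraction of a trivial-cylinder limit from the shifted maps --- and the logical skeleton is correct. Two steps are stated more glibly than they can be carried out. First, in the rigidity step you assert that $a$ is a harmonic function on $\C$ with finite Dirichlet energy; but the Hofer energy only controls the weighted integrals $\int_\C\phi'(a)\lvert\nabla a\rvert^2$ for $\phi\in\Lambda$, which does not bound $\int_\C\lvert\nabla a\rvert^2$ when $a$ is unbounded. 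The actual argument passes to the entire function $\Phi=a+ib$, where $db=v^*\lambda$ is well defined because $\C$ is simply connected, and rules out non-constant $\Phi$ by combining the finiteness of $\sup_{\phi}\int_\C\Phi^*\bigl(\phi'(x)\,dx\wedge dy\bigr)$ with the area/degree formula and Picard's theorem. Second, in the case $m=0$ the conclusion $\lvert\nabla\tilde u\rvert\to0$ does not by itself bound $a$ near the puncture (the integral of a quantity tending to zero can diverge); removability requires the separate argument that the image of $\tilde u$ near $z_0$ is bounded, which Hofer obtains from the smallness of the $\omega_\phi$-area together with a monotonicity estimate. Both are standard repairs and do not affect the architecture of your proof.
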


A non-removable puncture $z_0 \in \Gamma$ is said to be positive or negative if $\varepsilon(z_0)=+1$ or $\varepsilon(z_0)=-1,$ respectively. Then $a(z)\to \pm \infty$ as $z\to z_0$ according to the sign $\epsilon(z_0) = \pm 1$. A periodic orbit $P=(w, T)$ as in Theorem \ref{thm_hofer} is an asymptotic limit of $\tilde u$ at $z_0$. The set of asymptotic limits of $\tilde u$ at $z_0$ is compact and connected; see \cite[Lemma 13.3.1]{FvKbook} and \cite{Si3}.

The following theorem of Hofer, Wysocki, and Zehnder shows that if an asymptotic limit of $\tilde{u}$ at $z_0$ is nondegenerate, then it is the unique asymptotic limit of $\tilde{u}$ at $z_0$. 

\begin{theorem} [Hofer-Wysocki-Zehnder \cite{properties_1}] 
Let  $z_0\in \Gamma$ be a puncture of a finite energy curve $\tilde u = (a, u) \colon S \setminus \Gamma \to \R \times M$ and let   $P=(w, T)$ be an asymptotic limit of $\tilde u$ at $z_0.$ If $P$ is non-degenerate, then  $P$ is the unique asymptotic limit at $z_0$.  
\end{theorem}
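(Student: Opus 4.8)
The plan is to reduce the uniqueness assertion to a connectedness argument, using the two facts already recorded in the excerpt: that the set of asymptotic limits of $\tilde u$ at $z_0$ is compact and connected, and that non-degeneracy of a periodic orbit makes it isolated. Write $\mathcal{M}(z_0)$ for the set of all asymptotic limits of $\tilde u$ at $z_0$, viewed as a subset of the space of periodic Reeb orbits or, equivalently, of the parametrized loops $t\mapsto w(\varepsilon T t)\in C^\infty(\R/\Z, M)$ appearing in Theorem \ref{thm_hofer}, equipped with the $C^\infty$-topology. By hypothesis $P\in \mathcal{M}(z_0)$, and $\mathcal{M}(z_0)$ is compact and connected; the goal is to show $\mathcal{M}(z_0)=\{P\}$.

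First I would check that all elements of $\mathcal{M}(z_0)$ share the common period $T$. In cylindrical coordinates $(s,t)\in[0,+\infty)\times\R/\Z$ near $z_0$, the first line of \eqref{eq_psedo_holom_2} gives $u^*d\lambda=|\pi_\xi u_s|^2\,ds\wedge dt\geq 0$, so by Stokes the function $s\mapsto \int_{\{s\}\times\R/\Z}u^*\lambda$ is monotone; it is bounded in terms of the Hofer energy $E(\tilde u)<\infty$, hence converges to some value $T_\infty$ as $s\to+\infty$. For any asymptotic limit $P'=(w',T')$ arising along a sequence $s_n\to+\infty$, the $C^\infty$-convergence $u(s_n,\cdot)\to w'(\varepsilon T'\cdot)$ lets me pass to the limit in $\int_{\{s_n\}\times\R/\Z}u^*\lambda$, and using $\lambda(R)=1$ this forces $T'=|T_\infty|$. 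Thus every orbit in $\mathcal{M}(z_0)$ has the same period as $P$.

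Next I would use non-degeneracy to show $\{P\}$ is open in $\mathcal{M}(z_0)$. Since $P=(w,T)$ is non-degenerate, the linearized return map of the Reeb flow along $w$ has no eigenvalue equal to $1$ on $\xi$; applying the implicit function theorem to the Poincaré return map on a transverse disk yields a neighborhood $U$ of the image $w(\R/\Z)$ in $M$ such that $P$ is the only periodic orbit of period $T$ whose image lies in $U$. If some sequence $P_n'\in\mathcal{M}(z_0)\setminus\{P\}$ converged to $P$, the associated loops would converge in $C^\infty$ to a reparametrization of $w$, so their images would eventually lie in $U$; since each $P_n'$ has period $T$ by the previous step, this contradicts the local uniqueness. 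Hence $P$ is isolated in $\mathcal{M}(z_0)$.

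Finally, $\{P\}$ is closed in the Hausdorff space $\mathcal{M}(z_0)$ and, by the previous step, also open, so it is a nonempty clopen subset of the connected set $\mathcal{M}(z_0)$; connectedness then forces $\mathcal{M}(z_0)=\{P\}$, which is exactly the claim. The main obstacle is the isolation step: one must make precise the sense in which $C^\infty$-convergence of the loops $u(s_n,\cdot)$ controls both the geometric image and the period of the limiting orbit, and match the non-degeneracy condition (a transverse spectral condition on the asymptotic operator associated to $P$, equivalently on the return map) with genuine isolation inside $\mathcal{M}(z_0)$. An alternative, following the original asymptotic analysis of Hofer–Wysocki–Zehnder, would be to establish the exponential decay of $\tilde u$ near $z_0$ governed by a nonzero eigenvalue of that asymptotic operator, which upgrades the subsequential convergence of Theorem \ref{thm_hofer} to genuine convergence and thereby yields uniqueness directly; but the connectedness argument above is shorter and relies only on facts already available in the excerpt.
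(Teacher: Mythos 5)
Your argument is correct, but note that the paper offers no proof of this statement at all: it is quoted verbatim from Hofer--Wysocki--Zehnder \cite{properties_1}, where uniqueness of the asymptotic limit is obtained as a corollary of a much stronger result, namely exponential convergence of $\tilde u$ near $z_0$ to the non-degenerate orbit, governed by a nonzero eigenvalue of the asymptotic operator. Your soft route --- monotonicity and boundedness of $s\mapsto\int_{\{s\}\times\R/\Z}u^*\lambda$ forcing all asymptotic limits to share the period $|T_\infty|=T$, isolation of the non-degenerate orbit $P$ among period-$T$ orbits via the return map, and then a clopen argument in the compact connected set of asymptotic limits --- is essentially the argument presented in \cite{FvKbook}, which this paper itself cites for the compactness and connectedness of the limit set; so it is a legitimate and considerably shorter proof of the uniqueness statement in isolation. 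What the hard approach of \cite{properties_1} buys in exchange is the exponential decay and asymptotic formula, which are indispensable elsewhere (Fredholm theory, intersection theory, and the automatic transversality used in section \ref{subsec:holocurves}), whereas your approach buys economy: it needs only Theorem \ref{thm_hofer} and the limit-set facts already recorded in the paper. One point deserves tightening: your parenthetical claim that working in the space of periodic orbits is ``equivalent'' to working with parametrized loops is not quite right, since in $C^\infty(\R/\Z,M)$ the parametrizations of $P$ form a whole circle $S_P=\{t\mapsto w(\varepsilon Tt+c)\}$ of time-shifts rather than a single point. The fix is harmless: either run the clopen argument in the (Hausdorff) quotient space of unparametrized orbits, or keep the loop space and replace $\{P\}$ by $S_P$, which is compact (hence closed) and, by your isolation step combined with the period constancy from your first step, also open in the limit set; connectedness then gives that the limit set equals $S_P$, i.e.\ $P$ is the unique asymptotic limit as an orbit, which is exactly the assertion since the paper identifies periodic orbits with the same image and period.
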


\subsection{Re-scaled coordinates near a saddle-center}\label{sec_rescaling} Throughout the paper we shall  re-scale coordinates near the saddle-center $p=(v,0)\in H^{-1}(0)$, where $v$ represents any saddle point $v_1, \ldots,v_l$ of $V.$ We may assume that $v=0$  and $V=ax_1^2/2 + bx_2^2/2 + R(x),$ where $a<0$, $b>0$ and $R(x)=O(|x|^3).$ Taking new coordinates $(x,y)=\sqrt{\epsilon}( \hat x,\hat y)$, with $\epsilon>0$ small, we consider the Hamiltonian 
$$
\hat H_\epsilon(\hat x,\hat y) = \frac{1}{\epsilon}H(x,y) =\frac{\hat y_1^2+\hat y_2^2}{2}+ \frac{a\hat x_1^2}{2} + \frac{b\hat x_2^2}{2}+ \frac{1}{\epsilon} \hat R_\epsilon(\hat x),
$$ 
where $\hat R_\epsilon(\hat x) = R(\sqrt{\epsilon}\hat x)$. The rescaled potential $V(\sqrt{\epsilon} \hat x)/\epsilon$ is denoted by $\hat V_\epsilon(\hat x)$. Notice that $H^{-1}(\epsilon E)$ corresponds to $\hat H_\epsilon^{-1}(E), \forall E\in \R$. We fix $E>0$. Since $\hat R_\epsilon(\hat x)/\epsilon$  converges in $C^\infty_{\text{loc}}(\R^2)$ to $0$ as $\epsilon \to 0^+,$
 $\hat H_\epsilon$ converges in $C^\infty_{\text{loc}}(\R^4)$ to 
$$
\hat H_0(\hat x,\hat y):=\frac{\hat y_1^2+\hat y_2^2}{2} + \frac{a\hat x_1^2}{2} + \frac{b\hat x_2^2}{2}
$$
as $\epsilon \to 0^+$. The potential of $\hat H_0$ is denoted by $\hat V_0(\hat x) = a\hat x_1^2/2+b\hat x_2^2/2$. The equilibrium point $(\hat x, \hat y)=(0,0)$ of $\hat H_0$ projects into the $\hat x_1 \hat y_1$-plane as a saddle and into the $\hat x_2 \hat y_2$-plane as a center.
The orbit $\hat P_{2,0,E}=\{\hat x_1=\hat y_1=0, b\hat x_2^2+\hat y_2^2=2E\}\subset \hat H_0^{-1}(E)$ is an index-$2$ hyperbolic orbit. It is the limit as $\epsilon \to 0^+$ of index-$2$ hyperbolic orbits  $\hat P_{2,\epsilon, E} \subset \hat H_\epsilon^{-1}(E)$ corresponding to the Lyapunov orbits  $P_{2,\epsilon E} \subset H^{-1}(\epsilon E)$ near $0$. Notice that both the disks
$$
\hat U_{1,0,E}:=\hat H_0^{-1}(E) \cap \{\hat x_1=0,\hat y_1>0\} \; \mbox{ and } \; \hat U_{2,0,E}:=\hat H_0^{-1}(E) \cap \{\hat x_1=0, \hat y_1<0\},$$
have $\hat P_{2,0,E}$ as boundary and are transverse to the flow on $\hat H_0^{-1}(E)$. See Figure \ref{fig:lyapunov}. The behavior of $\hat U_{1,0,E}$ and $\hat U_{2,0,E}$ near $\hat P_{2,0,E}$ and the $C^\infty_{\text{loc}}$-convergence of $\hat H_\epsilon$ to $\hat H_0$ imply that $\hat P_{2,\epsilon,E}$ bounds embedded disks $\hat U_{1,\epsilon,E}, \hat U_{2,\epsilon,E}\subset \hat H_\epsilon^{-1}(E)$ transverse to the flow and $C^\infty$-close to $\hat U_{1,0,E}$ and $\hat U_{2,0,E}$, respectively, for every $\epsilon>0$ sufficiently small. In particular, if $Q_{\epsilon E} \subset H^{-1}(\epsilon E)$ is a periodic orbit linked with $P_{2,\epsilon E}$, then $\hat Q_{\epsilon, E}:=Q_{\epsilon E}/ \sqrt{\epsilon} \subset \hat{H}^{-1}_{\epsilon}(E)$ is linked with $\hat P_{2,\epsilon,E}$ and therefore intersects $\hat U_{1,\epsilon,E}$ and $\hat U_{2,\epsilon,E}.$ 

We shall see later that such a linked orbit $Q_{\epsilon E}$ has an arbitrarily high index if $\epsilon, E>0$ are sufficiently small. Also, we shall choose the disks $\hat U_{1,\epsilon, E}, \hat U_{2,\epsilon, E}$ as projections of finite energy planes in the symplectization for suitable almost complex structures.

\begin{figure}[!ht]
    \includegraphics[width=0.7\textwidth]{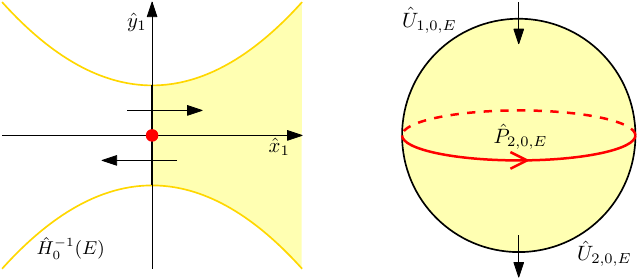}
    \caption{Projections of the disks $\hat U_{1,0,E}$ and $\hat U_{2,0,E}$ to the $\hat x_1\hat y_1$-plane represented in bold black lines. The flow goes across the disks in different directions (left). The hyperbolic orbit $\hat P_{2,0,E}$ is the equator of a two-sphere that locally separates the energy level and mediates the entry/exit of the flow into/from a chamber (right).}
    
    \label{fig:lyapunov}
\end{figure}

%\begin{figure}[!ht]
%    \includegraphics[width=0.7\textwidth]{fig_plano_lyapunov_8.pdf}
%    \caption{Projections of the planes $\hat U_{1,0,E}$ and $\hat U_{2,0,E}$ to the $x_1y_1$-plane (left). The flow goes across the planes in different directions (right).
%    }
%    \label{fig:lyapunov}
%\end{figure}

\section{Proof of Theorem \ref{main_theorem}}\label{sec_proof_main}

 The proof of Theorem \ref{main_theorem} is organized in several steps:
 \begin{itemize}
     \item We start by changing the Hamiltonian $H$ away from $S_0$ to cap the critical set near the saddle-centers. To do that, we change the potential $V$ away from $K_0$ so that the Hill region $\{V\leq 0\}$ becomes the union of $K_0$ and $l$ disjoint compact disks $K_1,\ldots, K_l$, touching $K_0$ at the saddles $v_1,\ldots,v_l$. The new Hamiltonian has the same saddle-centers $p_1,\ldots,p_l\in H^{-1}(0)$ and, for $E>0$ sufficiently small, $H^{-1}(E)$ is a sphere-like hypersurface $W_E\subset \R^4$ whose projection contains $K_0\bigcup \cup_{i=1}^l K_i$. This is the content of section \ref{subsec:compact}.
     \item In section \ref{sec:weak_conv}, we show that $W_E$ is weakly convex, and the Lyapunov orbits are the only index-$2$ orbits for every $E>0$ sufficiently small. Moreover, we study the linearized flow near a saddle-center and show that index-$3$ orbits in $W_E$ cannot be linked with any Lyapunov orbit if $E>0$ is sufficiently small. 
     \item A special contact form $\lambda_E$ on $W_E$ is constructed in section \ref{sec:liouville}. This contact form is crucial in the location of rigid planes asymptotic to the Lyapunov orbits. 
     \item In section \ref{subsec:holocurves}, we choose a special almost complex structure $J_E$ in the symplectization of $W_E$, which is adapted to $\lambda_E$ and admits a pair of rigid planes asymptotic to each Lyapunov orbit. The projections of the rigid planes to $W_E$ lie in the unchanged domain of the Hamiltonian $H$ for every $E>0$ sufficiently small.  
     \item The weakly convex foliation in the domain of $H^{-1}(E)$ determined by the rigid planes then follows from Theorem \ref{thm:weakconv} and the previous steps. See Remark \ref{rem_fef}.
     
 \end{itemize}

\subsection{Capping the energy surface}\label{subsec:compact} 
 Recall that $K_0\subset \{V\leq 0\}$ is the compact subset given by the projection to the $x$-plane of the singular sphere-like subset $S_0\subset H^{-1}(0)$ containing $l$  saddle-centers of $H$ which project to $l$ saddle points of $V$ in the boundary of $K_0$.  Since the energy surface $H^{-1}(E), E>0$ small, may not be compact at first, it will be convenient to change the potential $V$ away from $K_0$ so that $H^{-1}(E)$ becomes a sphere-like regular component projecting near $K_0$, for every $E>0$ small.

We may assume that $0\in \partial K_0$ is one of the saddles of $V$ and $V(x) = ax_1^2/2 + bx_2^2/2 + R(x)$ near $0$, where $a<0$, $b>0$ and $R(x)=O(|x|^3)$. We can also assume that $K_0$ is locally contained in $\{x_1 \leq 0\}$. We  locally change $V$ on $\{x_1>0\}$. 
For each $\epsilon>0$ small, consider the re-scaled Hamiltonian $\hat H_\epsilon$ and re-scaled potential $\hat V_\epsilon$ as in section \ref{sec_rescaling}. Choose a smooth function $f\colon(-\infty,2) \to [0,+\infty)$ so that 
\begin{equation}\label{eq_cutfunction}
f\equiv 0 \mbox{ on }(-\infty,1], \ \ f'''>0 \mbox{ on }(1,2), \mbox{ and }\displaystyle\lim_{t\to 2^-} f(t) =+\infty,
\end{equation}
and let
\begin{equation}\label{Vhat_epsilon}
\tilde V_\epsilon(\hat x):= \hat V_\epsilon(\hat x) + f(\hat x_1), \quad \forall \hat x = (\hat x_1,\hat x_2) \; \; \mbox{with} \; \; \hat x_1 < 2.
\end{equation}
Note that $\tilde V_\epsilon$ coincides with $\hat V_\epsilon$ on $\{\hat x_1 \leq 1\}$ and thus $0\in\R^2$ is a saddle point of $\tilde V_\epsilon$.

\begin{lemma}\label{lem:sl}
    For every $\epsilon>0$ sufficiently small, $\{\tilde V_\epsilon \leq 0\}$ contains a disk-like compact set $\tilde K_{\epsilon} \subset \{0\leq \hat x_1 < 2\}$ with a unique singularity at $0$. Except for the origin, every point in  $\partial \tilde K_\epsilon$ is a regular point of $\tilde V_\epsilon,$ and $\tilde V_\epsilon <0$ in $\tilde K_\epsilon \setminus \partial \tilde K_\epsilon.$
\end{lemma}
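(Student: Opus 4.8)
The plan is to reduce this two-dimensional problem to the study of a one-dimensional profile obtained by minimizing $\tilde V_\epsilon$ in the $\hat x_2$-direction, and then to read off the disk $\tilde K_\epsilon$ from the graph of that profile. First I would exploit convexity in $\hat x_2$. Since $f$ does not depend on $\hat x_2$ and, by section \ref{sec_rescaling}, $\hat R_\epsilon(\hat x)/\epsilon = R(\sqrt\epsilon\hat x)/\epsilon \to 0$ in $C^\infty_{\mathrm{loc}}(\R^2)$, we get $\partial^2_{\hat x_2\hat x_2}\tilde V_\epsilon = b + \partial^2_{\hat x_2\hat x_2}(\hat R_\epsilon/\epsilon) \to b>0$ uniformly on compact subsets of $\{\hat x_1<2\}$. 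Fixing a large compact interval $J\ni 0$ in $\hat x_2$, the map $\hat x_2\mapsto \tilde V_\epsilon(\hat x_1,\hat x_2)$ is then strictly convex for all small $\epsilon$, so every slice $\{\hat x_2\in J: \tilde V_\epsilon(\hat x_1,\cdot)\le 0\}$ is a (possibly empty or degenerate) interval $[\hat x_2^-(\hat x_1),\hat x_2^+(\hat x_1)]$. The minimizer $\eta_\epsilon(\hat x_1)$ is unique and smooth in $\hat x_1$ by the implicit function theorem applied to $\partial_{\hat x_2}\tilde V_\epsilon=0$, and the slice is nonempty exactly when the profile $h_\epsilon(\hat x_1):=\tilde V_\epsilon(\hat x_1,\eta_\epsilon(\hat x_1))$ is $\le 0$. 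Since $R(0)=0$ and $\nabla R(0)=0$, one checks $\eta_\epsilon(0)=0$ and $h_\epsilon(0)=0$.

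Next I would analyze $h_\epsilon$ through its limit. As $\epsilon\to0^+$ we have $\tilde V_\epsilon\to\tilde V_0:=\tfrac a2\hat x_1^2+\tfrac b2\hat x_2^2+f(\hat x_1)$ in $C^\infty_{\mathrm{loc}}(\{\hat x_1<2\})$, so $\eta_0\equiv0$ and $h_\epsilon\to g$ in $C^2_{\mathrm{loc}}$, where $g(\hat x_1):=\tfrac a2\hat x_1^2+f(\hat x_1)$ with $a<0$. The conditions \eqref{eq_cutfunction} on $f$ give $f,f',f''>0$ on $(1,2)$ with $f',f''\to+\infty$ as $\hat x_1\to2^-$; since $g'''=f'''>0$, the function $g''=a+f''$ increases from $a<0$ to $+\infty$, so $g'=a\hat x_1+f'$ has a single minimum and a single zero $d\in(1,2)$. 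Hence $g$ strictly decreases on $(0,d)$ and strictly increases on $(d,2)$, and as $g(0)=0$, $g(d)<0$, $g\to+\infty$, it has a unique zero $c\in(d,2)$ with $g'(c)>0$, being negative on $(0,c)$ and positive on $(c,2)$. By $C^2$-convergence, for all small $\epsilon$ the profile $h_\epsilon$ vanishes at $\hat x_1=0$, is negative on $(0,c_\epsilon)$, and has a single further zero $c_\epsilon\to c$ that is transverse in the sense $h_\epsilon'(c_\epsilon)>0$ (positivity of $h_\epsilon$ near $\hat x_1=2$ follows since $f\to+\infty$ dominates the bounded term $\hat V_\epsilon$ on $J$).

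Finally I would assemble $\tilde K_\epsilon:=\{(\hat x_1,\hat x_2): 0\le \hat x_1\le c_\epsilon,\ \hat x_2^-(\hat x_1)\le \hat x_2\le \hat x_2^+(\hat x_1)\}$, the band between the two graphs $\hat x_2^\pm$, which agree only at the pinch points $\hat x_1=0$ and $\hat x_1=c_\epsilon$. This is a closed topological disk contained in $\{0\le\hat x_1<2\}$, and $\tilde V_\epsilon<0$ on its interior by strict convexity together with $h_\epsilon<0$ there. On $\partial\tilde K_\epsilon$ away from the two pinch points the endpoints lie off the minimizer, so $\partial_{\hat x_2}\tilde V_\epsilon\ne0$ and they are regular; at the cap $(c_\epsilon,\eta_\epsilon(c_\epsilon))$ one has $\partial_{\hat x_2}\tilde V_\epsilon=0$ but $\partial_{\hat x_1}\tilde V_\epsilon=h_\epsilon'(c_\epsilon)>0$, so it is regular too; and at the origin $f\equiv0$ gives $\tilde V_\epsilon=\hat V_\epsilon$, whose Hessian is $\mathrm{diag}(a,b)$ with $a<0<b$, so $0$ is a nondegenerate saddle and the unique singularity of $\partial\tilde K_\epsilon$. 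This yields all the assertions.

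The step I expect to be the main obstacle is controlling the boundary at the two pinch points, where the band between $\hat x_2^\pm$ collapses and the graph description of $\partial\tilde K_\epsilon$ over $\hat x_1$ degenerates. Concretely, one must rule out an extra or tangential crossing of the profile and show the cap at $c_\epsilon$ is a genuine transverse fold, which is exactly where the engineered conditions \eqref{eq_cutfunction} enter, guaranteeing $g'(c)>0$ and hence a transverse zero that survives the $O(\sqrt\epsilon)$ perturbation; simultaneously one must ensure the origin remains the single non-regular boundary point, which follows from the robustness of the nondegenerate saddle of $\hat V_\epsilon$ under this perturbation.
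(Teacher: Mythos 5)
Your proof is correct, and it reaches the conclusion by a more constructive route than the paper. Both arguments hinge on the same kernel: the $C^\infty_{\mathrm{loc}}$-limit $\tilde V_0=\tfrac a2\hat x_1^2+\tfrac b2\hat x_2^2+f(\hat x_1)$ and the fact that \eqref{eq_cutfunction} forces $g(\hat x_1)=\tfrac a2\hat x_1^2+f(\hat x_1)$ to have a unique critical point in $(0,2)$, a nondegenerate minimum with negative critical value. From there the paper argues softly: the sublevel sets $\{\tilde V_0\le E\}$ for $E$ between the minimum value and $0$ are regular disks around the minimum, they develop a singularity at the saddle as $E\to 0^-$, and the picture persists for $\tilde V_\epsilon$ because both critical points (the saddle at $0$ and the interior minimum) are nondegenerate. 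You instead work with $\tilde V_\epsilon$ itself throughout: fiberwise strict convexity in $\hat x_2$ (which survives the $O(\sqrt{\epsilon})$ perturbation) reduces everything to the one-dimensional profile $h_\epsilon$, whose zeros and signs you control via $C^2$-convergence to $g$, and the disk is then assembled explicitly as the band between the two graphs $\hat x_2^{\pm}$ pinching at $\hat x_1=0$ and $\hat x_1=c_\epsilon$. What your version buys is an explicit description of $\partial\tilde K_\epsilon$ and a pointwise verification of regularity (including the transverse cap at $c_\epsilon$, where $\partial_{\hat x_1}\tilde V_\epsilon=h_\epsilon'(c_\epsilon)>0$ by the envelope identity), which makes the perturbation step airtight where the paper's ``similar conclusions hold'' is left implicit; the cost is that you must separately justify the behavior of $h_\epsilon$ at the degenerate endpoint $\hat x_1=0$, where $h_\epsilon(0)=h_\epsilon'(0)=0$ and negativity on $(0,\delta)$ requires the second-order estimate $h_\epsilon''\to a-(\partial^2_{\hat x_1\hat x_2}\tilde V_\epsilon)^2/\partial^2_{\hat x_2\hat x_2}\tilde V_\epsilon<0$ near $0$ — a point you gesture at via the robustness of the nondegenerate saddle but would want to spell out in a final write-up.
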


\begin{proof}Notice that $\tilde V_\epsilon$ converges in $C^\infty_{\text{loc}}(\{\hat x_1<2\})$ to $\tilde V_0(\hat x_1,\hat x_2):=\hat V_0(\hat x_1,\hat x_2) +f(\hat x_1) = a\hat x_1^2/2+b\hat x_2^2/2 + f(\hat x_1)$ as $\epsilon\to 0^+$. 
By \eqref{eq_cutfunction},  $g(\hat x_1):=a \hat x_1^2/2+f(\hat x_1)$ has a unique critical point $\hat q_1$ in the interval $(0,2),$ which is a nondegenerate minimum.  
Hence $\tilde V_0$ has a unique nondegenerate minimum  $\tilde q =(\hat q_1,0)$ in $\{0< \hat x_1<2\}$ with $\tilde V_0(\tilde q )=g(\hat q_1)<0$. For every  $E\in (\tilde V_0(\tilde q ),0)$, $\tilde V_0^{-1}(E)$ contains a regular circle-like level on $\{0<\hat x_1<2\}$. Such a family of circles develops a singularity at $0\in \R^2$ as $E \to 0^-$, and thus $\tilde V_0^{-1}(0)$ contains a singular circle-like subset $\partial \tilde K_0$ with a unique singularity at $0$, bounding a disk-like compact subset $\tilde K_0 \subset \{\tilde V_0 \leq 0\}\cap \{0\leq \hat x_1 <2\}$. 

Since both singularities at $0$ and $\tilde q $ are nondegenerate critical points of $\tilde V_0$, similar conclusions hold for $\tilde V_\epsilon$, for every $\epsilon>0$ sufficiently small, that is $\tilde V_\epsilon^{-1}(0)$ contains a singular circle-like subset $\partial \tilde K_\epsilon$ with a unique singularity at $0$, bounding a disk-like compact subset $\tilde K_\epsilon \subset \{\tilde V_\epsilon \leq 0\}$ and $\tilde V_\epsilon|_{\tilde K_\epsilon \setminus \partial \tilde K_\epsilon}<0$.  This finishes the proof.
\end{proof}

In the original coordinates $x=(x_1,x_2)$, the compact set $\tilde K_\epsilon$ corresponds to $K_\epsilon:=\sqrt{\epsilon} \tilde K_\epsilon$ and can be made arbitrarily close to the saddle point. Performing this construction near each saddle $v_1,\ldots,v_l$ of $V$ we end up with $l$ disk-like compact sets $K_{i,\epsilon}:=\sqrt{\epsilon} \tilde K_{i,\epsilon}, i=1,\ldots,l$, each one admitting a unique singularity at the corresponding $v_i$. 
The following proposition follows directly from Lemma \ref{lem:sl}.

\begin{proposition}\label{prop_capping} Let $\U_0 \subset \R^2$ be an open neighborhood of the disk-like compact set $K_0\subset \R^2.$ For every $\epsilon>0$ sufficiently small, there exists a potential $V_\epsilon\colon\R^2 \to \R$ coinciding with $V$ near $K_0$ so that
\begin{itemize}  
     \item[(i)] The Hill region $\{V_\epsilon\leq 0\}$ contains a subset formed by $K_0$ and the union of  disjoint  disk-like compact sets $K_{1,\epsilon},\ldots,K_{l,\epsilon}\subset \U_0,$ touching $K_0$ precisely at the saddles $v_1,\ldots,v_l$.  The saddles $v_1,\ldots,v_l$ are the unique critical points of $V$ in $V^{-1}(0)$.
     
    \item[(ii)] For every $E>0$ sufficiently small, the Hamiltonian $H_\epsilon(x,y):=|y|^2/2 +  V_\epsilon(x)$ admits a regular sphere-like component $W_{\epsilon,E}=  H_\epsilon^{-1}(E)$ projecting to a disk-like compact set $K_{\epsilon,E} \supset K_0 \bigcup \cup_{i=1}^l K_{i,\epsilon}$.  See Figure \ref{fig:compact}.

    \item[(iii)] In re-scaled coordinates $(\hat x,\hat y)=(x/\sqrt{\epsilon},y/\sqrt{\epsilon})$ near the saddle-centers, the Hamiltonian $ H_\epsilon(\sqrt{\epsilon}\hat x,\sqrt{\epsilon}\hat y)/\epsilon$ converges in $C^\infty_{\text{loc}}(\{\hat x_1 <2\})$ to 
    $$
    \tilde H_0(\hat x,\hat y):=\frac{|\hat y|^2}{2} + \frac{a\hat x_1^2}{2} + \frac{b\hat x_2^2}{2} + f(\hat x_1), 
    $$ 
    as $\epsilon\to 0^+$, where $a<0, b>0$, and $f$ satisfies \eqref{eq_cutfunction}.
\end{itemize}
 
\end{proposition}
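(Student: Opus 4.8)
The plan is to build the global potential $V_\epsilon$ by patching the local caps supplied by Lemma \ref{lem:sl} onto $V$, and then to read off (i)--(iii) from elementary Morse theory together with the standard fact that a mechanical energy surface over a disk-like Hill region is diffeomorphic to $S^3$. The delicate local analysis---the emergence of the singular disk-like cap as the level crosses the saddle value---has already been carried out in Lemma \ref{lem:sl}, so what remains is essentially bookkeeping and a topological assembly.

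First I would fix, near each saddle $v_i$, the adapted rescaled coordinates of section \ref{sec_rescaling}, oriented so that $K_0$ sits locally in $\{\hat x_1\le 0\}$ while the modification is carried out on $\{\hat x_1>0\}$. In the original coordinates this amounts to setting $V_\epsilon(x)=V(x)+\epsilon f(x_1^{(i)}/\sqrt\epsilon)$ near $v_i$, where $x_1^{(i)}$ is the local coordinate; since $f\equiv 0$ on $(-\infty,1]$, this equals $V$ on a neighborhood of $K_0$, and since $f(t)\to+\infty$ as $t\to 2^-$ it erects a barrier at $x_1^{(i)}=2\sqrt\epsilon$. Away from $K_0$ and the cap regions I would further modify $V_\epsilon$ (for instance by adding a term growing to $+\infty$ outside a large ball) so that all sublevel sets $\{V_\epsilon\le E\}$ are compact; this can be done without disturbing $V_\epsilon=V$ near $K_0$. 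Because $K_{i,\epsilon}=\sqrt\epsilon\,\tilde K_{i,\epsilon}$ shrinks to $v_i\in K_0$ as $\epsilon\to 0^+$, every cap lies in $\U_0$ once $\epsilon$ is small, giving (i): by Lemma \ref{lem:sl} the zero sublevel $\{V_\epsilon\le 0\}$ contains $K_0$ together with the $l$ disk-like caps $K_{i,\epsilon}$, each touching $K_0$ only at $v_i$ (the caps lie on the opposite side $\{\hat x_1\ge 0\}$), with unique singularity at $v_i$ and otherwise regular boundary. The saddles are the only critical points on the zero level, since the only other critical point created by the cap is the nondegenerate minimum $\tilde q_i$ at the strictly negative value $g(\hat q_{1})$, and the far-field modification is arranged to carry no critical value equal to $0$.

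For (ii), raising the level from $0$ to a small $E>0$ is the standard Morse move across the nondegenerate saddle value at each $v_i$: the two disks $K_0$ and $K_{i,\epsilon}$ meeting at $v_i$ merge, so $\{V_\epsilon\le E\}$ becomes a single disk-like compact set $K_{\epsilon,E}\supset K_0\cup\bigcup_i K_{i,\epsilon}$, kept within $\{\hat x_1<2\}$ near each saddle by the barrier $f$ and compact globally by the far-field modification. Since $K_{\epsilon,E}$ is a topological disk, the energy surface $W_{\epsilon,E}=H_\epsilon^{-1}(E)$---the velocity-circle bundle over the interior collapsed to $\partial K_{\epsilon,E}$ along the zero-velocity curve---is diffeomorphic to $S^3$, hence sphere-like; and $E$ is a regular value because $\nabla H_\epsilon(x,y)=(\nabla V_\epsilon(x),y)$ vanishes only over critical points of $V_\epsilon$, whose relevant critical values ($0$ at the saddles, $g(\hat q_{1})<0$ at the cap minima) avoid $E$. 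The Hausdorff convergence $K_{\epsilon,E}\to K_0\cup\bigcup_i K_{i,\epsilon}$ as $E\to 0^+$ then follows from the continuity of $V_\epsilon$. Finally, (iii) is immediate from the construction: in rescaled coordinates $\epsilon^{-1}H_\epsilon(\sqrt\epsilon\,\hat x,\sqrt\epsilon\,\hat y)=\hat H_\epsilon(\hat x,\hat y)+f(\hat x_1)$, and as $\hat H_\epsilon\to\hat H_0$ in $C^\infty_{\mathrm{loc}}(\{\hat x_1<2\})$ while $f$ is fixed, the sum converges to $\tilde H_0$.

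The only step requiring care is the topological assembly in (ii)---checking that passing the common saddle value genuinely merges the touching disks into one smooth disk and that the resulting mechanical energy surface is $S^3$---but this is routine once the local bifurcation is provided by Lemma \ref{lem:sl}, which is why the proposition follows directly from it.
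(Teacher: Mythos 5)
Your proposal is correct and follows essentially the same route as the paper: defining $V_\epsilon$ near each saddle by rescaling the capped potential of Lemma \ref{lem:sl} (equivalently, adding $\epsilon f(x_1^{(i)}/\sqrt{\epsilon})$), arranging $V_\epsilon>0$ away from $K_0\cup\bigcup_i K_{i,\epsilon}$, and reading off (i)--(iii) from that lemma. The paper's own proof is terser and leaves the Morse-theoretic merging of the disks and the $S^3$ identification in (ii) implicit, so your extra detail there is a faithful elaboration rather than a different argument.
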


\begin{proof} 
 The new potential $V_\epsilon$ near each saddle $v_1,\ldots,v_l$ is obtained in local coordinates as in Lemma \ref{lem:sl} by defining $V_\epsilon(x) := \epsilon \tilde V_\epsilon(x/\sqrt{\epsilon})$. Locally, each  compact subset of $\{ V_\epsilon \leq 0\}$  is given by   $K_{i,\epsilon}=\sqrt{\epsilon} \tilde K_{i,\epsilon}$. Hence $K_{i,\epsilon}$ lies in $\U_0$ if 
 $\epsilon>0$ is taken sufficiently small. We may assume that $V_\epsilon >0$ on $\R^2 \setminus (K_0\bigcup \cup_{i=1}^l K_{i,\epsilon})$. For every $\epsilon>0$ fixed sufficiently small, $V_\epsilon$ coincides with $V$ near $K_0$ and the projection $K_{\epsilon,E}$ of the sphere-like component $W_{\epsilon,E}:= H_\epsilon^{-1}(E)$ is contained in $\U_0$ for every $E>0$  sufficiently small.
\end{proof}

 \begin{figure}[!ht]
    \includegraphics[width=0.8\textwidth]{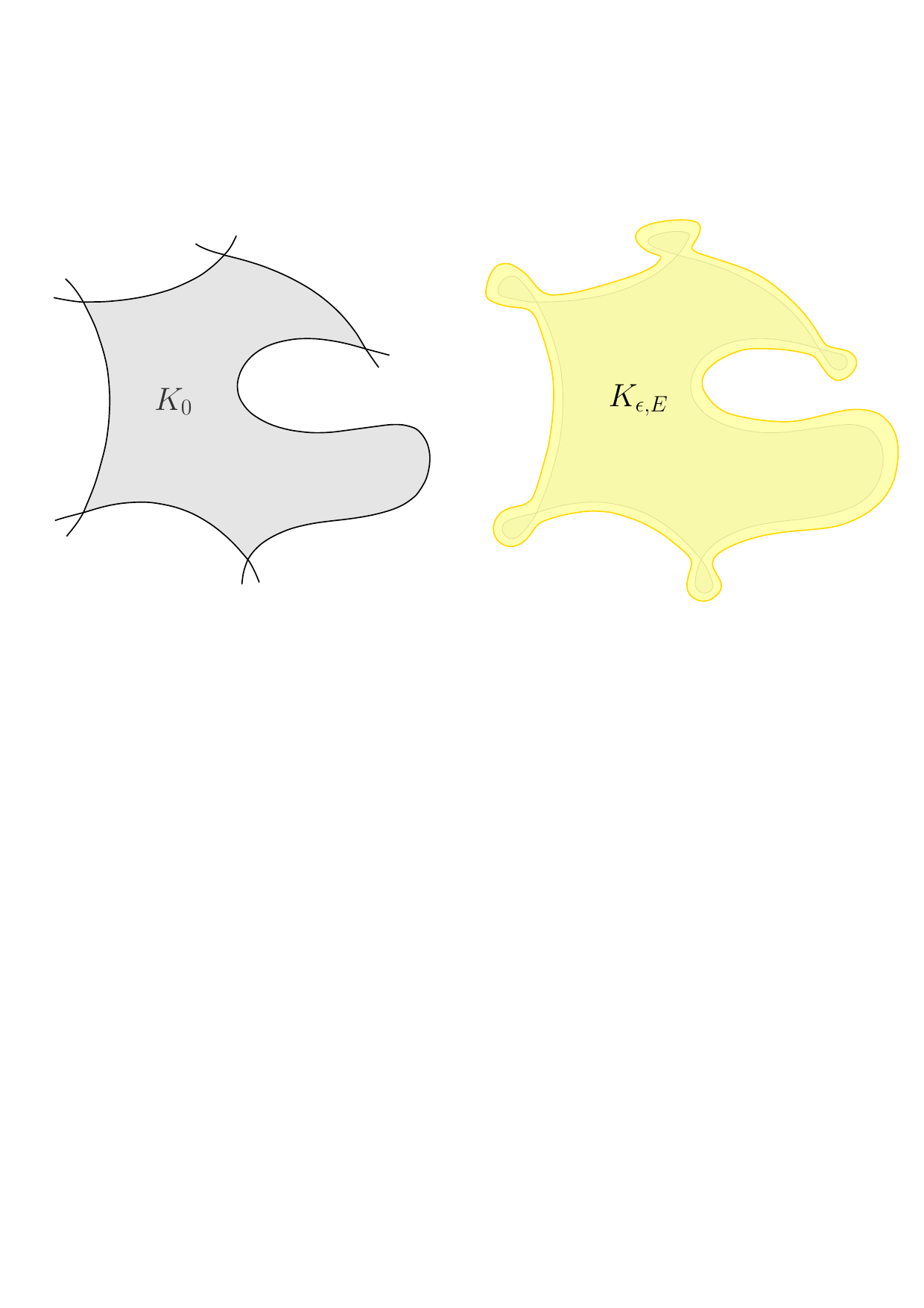}
    \caption{The compact subset $K_0\subset \{V\leq 0\}$ before changing the potential (left). The Hill region $K_{\epsilon,E}=\{V_\epsilon \leq E\}$ after capping the critical set (right), with $\epsilon,E>0$  small.}
    \label{fig:compact}
\end{figure}

\subsection{Weak convexity}\label{sec:weak_conv}
Consider the Hamiltonian $H_\epsilon=|y|^2/2+V_\epsilon(x)$ as in Proposition \ref{prop_capping}, where $\epsilon>0$ is sufficiently small. 
Let $W_{\epsilon, E}= H_\epsilon^{-1}(E),$ $E>0$ small,  be the regular sphere-like hypersurface whose projection to the $x$-plane contains $K_0 \bigcup \cup_{i=1}^l K_{i,\epsilon}$. Our goal in this section is to prove the following proposition.

\begin{proposition}\label{prop_weakconv}
 Assume that the Hamiltonian $H$ is real-analytic and the singular sphere-like subset $S_0$ satisfies conditions H1 and H2 as stated in the introduction.
 For every $\epsilon>0$ sufficiently small, the following holds: if $E>0$ is sufficiently small, then
\begin{itemize}
    \item[(i)]  The Hamiltonian flow  of $H_\epsilon$  on $W_{\epsilon,E}$ is weakly convex.

    \item[(ii)] The Lyapunov orbits in $W_{\epsilon,E}$ around the saddle-centers $p_1,\ldots,p_l$ are the only index-$2$ periodic orbits in $W_{\epsilon,E}$.
    
    \item[(iii)] If $P\subset W_{\epsilon,E}$ is an index-$3$ orbit, then $P$ is not linked with any Lyapunov orbit. 

    \item[(iv)] The Lyapunov orbits have the smallest actions among the actions of all periodic orbits in $W_{\epsilon, E}$.
    \end{itemize}
\end{proposition}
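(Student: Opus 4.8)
The plan is to control the Conley--Zehnder index $\mu$ (defined in section~\ref{def:CZ}) of every periodic orbit through the total rotation of the argument $\theta(t)$ of the transverse linearized flow~\eqref{eq:transflow}, splitting each orbit according to the time it spends in fixed neighborhoods of the saddle-centers $p_1,\dots,p_l$ (the \emph{neck regions}) and in the complementary \emph{regular region}, where $W_{\epsilon,E}$ is $C^\infty_{\mathrm{loc}}$-close to $S_0$ as $E\to 0^+$. The first ingredient is positivity on the regular region. Condition H2 asserts that the frame $\{X_1,X_2\}$ is positive along every trajectory of $S_0\setminus\{p_1,\dots,p_l\}$, i.e.~$S(t)$ in~\eqref{eq:transflow} is positive-definite there, whence $\dot\theta>0$ by~\eqref{eq:varangle} (as in Lemma~\ref{lema:convexpositive}). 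On any compact piece of the regular part of $S_0$ this holds uniformly, and since $H_\epsilon\equiv H$ near $K_0$ and $W_{\epsilon,E}\to S_0$ in $C^\infty_{\mathrm{loc}}$ off the $p_i$ (Proposition~\ref{prop_capping}), the bound $\dot\theta\ge c>0$ persists on the regular region of $W_{\epsilon,E}$ for $\epsilon,E$ small. Thus the regular region only \emph{adds} to the winding of $\theta$ and can never cancel winding produced in the necks; this monotonicity is precisely where H2 is indispensable.

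The second ingredient is the neck analysis via the rescaling of section~\ref{sec_rescaling}. In the coordinates $(\hat x,\hat y)$ the flow converges to the model $\tilde H_0$ of Proposition~\ref{prop_capping}(iii), which decouples into a hyperbolic block $(\hat x_1,\hat y_1)$ and an elliptic block $(\hat x_2,\hat y_2)$ of frequency $\sqrt b$. A direct computation of~\eqref{eq:transflow} for this model yields two facts. First, the Lyapunov orbit is hyperbolic with $\mu=2$, and it is the only periodic orbit contained in a neck, since the hyperbolic direction forbids recurrence there. Second, for an orbit that \emph{transits} a neck along the hyperbolic directions, the transverse plane carries the rotating elliptic mode, so the increment of $\theta$ over the passage equals $\sqrt b$ times the sojourn time near the saddle-center, up to a bounded error; hence the winding gained in a neck grows linearly with the time spent there.

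These ingredients give (i) and (ii). Any periodic orbit contained in a neck is a Lyapunov orbit, of index $2$; every other orbit visits the regular region. If its sojourn time in each neck stays bounded as $E\to 0^+$, then the orbit converges to a genuine periodic orbit of $S_0$ (here real-analyticity is used to keep the limit a bona fide orbit and to let the winding pass to the limit); since $\dot\theta>0$ throughout and, by dynamical convexity H1, the limit winds strictly more than one full turn ($\mu\ge 3$), the orbit also winds more than one full turn and $\mu\ge 3$ by the definition in section~\ref{def:CZ}. If instead the orbit spends an unbounded time in some neck, the neck estimate makes its winding, and hence $\mu$, arbitrarily large. This proves weak convexity and identifies the Lyapunov orbits as the only index-$2$ orbits.

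For (iii), suppose $P$ links a Lyapunov orbit $P_{2,E}^i$. By section~\ref{sec_rescaling}, $P$ meets both transverse disks $\hat U_{1,\epsilon,E}$ and $\hat U_{2,\epsilon,E}$; since $P_{2,E}^i$ and the disk it bounds shrink to the hyperbolic fixed point $p_i$ as $E\to 0^+$, the orbit $P$ must pass within $O(\sqrt E)$ of $p_i$, forcing a sojourn time $\gtrsim \log(1/E)\to\infty$. By the neck estimate $\mu(P)$ is then arbitrarily large, in particular $\mu(P)>3$, so no index-$3$ orbit can be linked with a Lyapunov orbit. Finally, (iv) follows from an action count: the action of $P_{2,E}^i$ is $O(E)\to 0$, whereas every non-Lyapunov orbit visits the regular region and therefore has action bounded below by the positive systole of $S_0$ up to an $o(1)$ error; so the Lyapunov orbits are strictly the shortest once $E$ is small. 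I expect the main obstacle to be the quantitative neck estimate itself---identifying the rotation of $\theta$ with the sojourn time near the saddle-center uniformly in $\epsilon,E$, together with the claim that linking forces an unboundedly long sojourn.
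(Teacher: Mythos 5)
Your overall strategy --- splitting each orbit into neck passages and excursions through the region where $W_{\epsilon,E}$ is close to $S_0$, and trading sojourn time near a saddle-center for winding of $\theta$ --- is the same as the paper's, and your treatment of (iii) and of the linking-forces-long-sojourn step matches the paper's corollary to Proposition~\ref{prop_indicealto}. However, there is a genuine gap: your case analysis omits the \emph{capped regions}. The proposition is about all of $W_{\epsilon,E}$, whose projection contains not only $K_0$ but also the artificial disks $K_{1,\epsilon},\dots,K_{l,\epsilon}$ created by modifying the potential. A periodic orbit can lie entirely over such a cap (e.g.\ near the artificial minimum of $\tilde V_\epsilon$), in which case it neither enters a small neighborhood of a saddle-center nor visits the region where $W_{\epsilon,E}$ converges to $S_0$; your claim that ``every other orbit visits the regular region'' is therefore false, and H1/H2 give no information about these orbits (they are hypotheses on $S_0$ only). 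The paper spends a substantial part of the proof on exactly this: it rules out low-index orbits in $\{0<\hat x_1<2\}$ by a contradiction argument that passes to the decoupled limit $\tilde H_0=\tilde H_1(\hat x_1,\hat y_1)+\tilde H_2(\hat x_2,\hat y_2)$ as $\epsilon\to 0^+$ and invokes Proposition~\ref{prop_decoupled1}. The same omission affects your proof of (iv), since an orbit confined to a cap need not have action bounded below by the systole of $S_0$ (the paper instead argues via convergence to non-constant trajectories of $\lambda_0$ on all of $\dot W_0$, caps included).

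A second, partly acknowledged, gap is the quantitative neck estimate. You derive ``$\Delta\theta \approx \sqrt{b}\times(\text{sojourn time})$'' from the quadratic model $\tilde H_0$, but the convergence of the rescaled Hamiltonian to the model is only $C^\infty_{\mathrm{loc}}$, hence controls the linearized flow over \emph{bounded} time intervals, whereas the whole point is that the sojourn time tends to infinity. This cannot be patched by a soft limiting argument: the paper instead puts the Hamiltonian into the Moser--R\"ussmann normal form $K=-\alpha I_1+\omega I_2+R(I_1,I_2)$ (this is precisely where real-analyticity is used), integrates the linearized equations explicitly, and carries out a delicate two-case estimate on $\dot\eta-\bar\omega$ to obtain $\eta(T)-\eta(-T)>\omega T-2\pi$ uniformly in the escaping trajectory. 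Relatedly, your mechanism for ensuring that winding gained in a neck is not cancelled elsewhere relies solely on H2, which again does not cover the caps; the paper uses instead a monotonicity property valid for \emph{any} mechanical Hamiltonian (in the frame of vertical/horizontal lifts the linearized flow never rotates backward by more than $\pi$), proved in Proposition~\ref{prop_indicealto}.
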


Before proving Proposition \ref{prop_weakconv}, we study the linearized flow of a general Hamiltonian $H:\R^4 \to \R$, not necessarily mechanical, near a saddle-center. We follow \cite{dePaulo_Salomao} and assume that $H$ is real-analytic near the saddle-center. 

\begin{lemma}\label{lem_deltatheta}
    Let $0\in \R^4$ be a saddle-center of a real-analytic Hamiltonian $H=H(x,y)$. Let $U_0 \subset \R^4$ be a small open neighborhood of $0$, and let $\{X_1, X_2\}$ be the transverse frame induced by the quaternions, see \eqref{eq:generalvecx}, defined on $\text{closure}(U_0)\setminus \{0\}$. Given $M>0$, there exist compact neighborhoods $U_M \subset U_1$ of $0$ contained in the interior of $U_0$ with the following significance. Let $0<\lambda\leq 1$, and let $\gamma(t), t\in [-T,T],$ be a trajectory of $H$ satisfying $\gamma(t) \in (\lambda U_1)\setminus \partial (\lambda U_1), \forall t\in (-T,T),$ $\gamma(\pm T) \in  \partial (\lambda U_1),$ and $\gamma([-T,T]) \cap (\lambda U_M) \neq \emptyset.$ Let $\theta (t)$ be a continuous argument of a non-trivial linearized solution along $\gamma$ in the frame  $\{X_1,X_2\}$, see \eqref{eq:transflow}. Then $\theta(T) - \theta(-T) > M.$
\end{lemma}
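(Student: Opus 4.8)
The plan is to exploit the exact scale-invariance of the transverse linearized rotation to reduce the statement to a uniform estimate for a compact family of Hamiltonians degenerating to the quadratic saddle-center model, and then to show that any trajectory reaching the small neighborhood $\lambda U_M$ is forced to spend a long time near the equilibrium, during which the elliptic (center) directions wind a large number of times.

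First I would record a scaling identity for $H_\lambda(\hat x,\hat y):=\lambda^{-2}H(\lambda\hat x,\lambda\hat y)$. A direct computation gives $X_0^{H_\lambda}(\hat w)=X_0^{H}(\lambda\hat w)$ and $\mathcal H_{H_\lambda}(\hat w)=\mathcal H_{H}(\lambda\hat w)$, so the quaternion frame $\{X_1,X_2\}$ and the matrix $S$ of \eqref{eq:transflow} are invariant, $S^{H_\lambda}(\hat w)=S^{H}(\lambda\hat w)$; moreover $X_H(\lambda\hat w)=\lambda X_{H_\lambda}(\hat w)$, so $\hat w(t)\mapsto\lambda\hat w(t)$ sends $H_\lambda$-trajectories to $H$-trajectories with the same time. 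Hence along $\gamma(t)=\lambda\hat\gamma(t)$ the equation \eqref{eq:transflow} for $H$ agrees with that for $H_\lambda$ along $\hat\gamma$, and the continuous arguments coincide, $\theta_H(T)-\theta_H(-T)=\theta_{H_\lambda}(T)-\theta_{H_\lambda}(-T)$. An $H$-trajectory in $\lambda U_1$ through $\lambda U_M$ corresponds to an $H_\lambda$-trajectory in $U_1$ through $U_M$. Since the remainder $\lambda^{-2}R(\lambda\hat x)\to 0$ in $C^\infty_{\mathrm{loc}}$, the family $\{H_\lambda\}_{\lambda\in(0,1]}$ extends continuously to $\lambda=0$, with $H_0=\tfrac{1}{2}|\hat y|^2+\tfrac{a}{2}\hat x_1^2+\tfrac{b}{2}\hat x_2^2$ the quadratic saddle-center. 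It therefore suffices to prove the estimate uniformly over the compact family $\{H_\lambda\}_{\lambda\in[0,1]}$, for trajectories in a fixed small $U_1$ meeting a fixed small $U_M$.

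Next I would isolate two quantitative mechanisms with constants uniform over $\lambda\in[0,1]$, which is legitimate by $C^\infty_{\mathrm{loc}}$-continuity of the family and persistence of the saddle-center at $0$ with its splitting into a hyperbolic and an elliptic symplectic plane of frequency $\omega>0$. For the \emph{transit-time blow-up}, I would note that for $U_1$ small the product of the two hyperbolic eigencoordinates is an (approximate) integral, so a trajectory entering $\partial U_1$, reaching a ball $U_M$ of radius $r_M$, and returning to $\partial U_1$ must shadow the stable and unstable manifolds; the logarithmic law of passage near a hyperbolic fixed point then gives $T\geq\tau(r_M)$ with $\tau(r_M)\to+\infty$ as $r_M\to 0^+$, uniformly in $\lambda$. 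For the \emph{rotation lower bound}, I would use that for $U_1$ small the linearized flow along any segment in $U_1$ is $C^0$-close to the constant flow $e^{tA}$ with $A=J_0\mathcal H_{H_\lambda}(0)$: the elliptic block rotates the transverse argument at a rate bounded below by a positive constant $\omega'\approx\omega$, contributing at least $\omega'T$ to $\theta(T)-\theta(-T)$, while the hyperbolic block is a planar hyperbolic flow and turns any line by a total amount less than $\pi$, contributing only a bounded correction. This yields $\theta(T)-\theta(-T)\geq\omega'T-C$ with $\omega'>0$ and $C$ uniform over the family.

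Combining the two mechanisms, I would choose $U_M$ so small that $\omega'\tau(r_M)-C>M$; then every admissible trajectory satisfies $\theta(T)-\theta(-T)>M$, as required. I expect the rotation lower bound to be the main obstacle: the argument $\theta$ is measured in the moving quaternion frame and is the argument of the reduced transverse solution of \eqref{eq:transflow}, not of the ambient $e^{tA}$, so one must carefully compare the quaternion-frame winding with the elliptic/hyperbolic decomposition — accounting for the flow and normal directions $X_3,X_0$ and for the variation of the frame along $\gamma$ — and then make the constants $\omega',C$ uniform across the compact family $\{H_\lambda\}$. It is precisely the closeness of $H_\lambda$ to the integrable quadratic model, together with the boundedness of planar hyperbolic rotation, that makes this comparison tractable.
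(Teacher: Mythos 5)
Your overall skeleton matches the paper's: show that reaching $\lambda U_M$ forces a long transit time $T$, show that the transverse winding grows like $\omega T$ up to a bounded correction, and compare frames. The transit-time part and the scaling reduction are fine. But the central quantitative step --- the lower bound $\theta(T)-\theta(-T)\geq \omega' T - C$ with $C$ independent of $T$ --- is asserted rather than proved, and the route you sketch for it does not work as stated. First, the transverse linearized dynamics in the frame $\{X_1,X_2\}$ is \emph{not} the direct sum of the elliptic rotation and a planar hyperbolic flow: the plane $\mathrm{span}\{X_1,X_2\}$ is the reduction of $\R^4$ modulo the flow and gradient directions $X_3,X_0$, and along an escaping orbit these directions have components in both the hyperbolic and elliptic factors, so the reduced coefficient matrix $S(t)$ couples the two blocks. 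The paper's proof of exactly this estimate (the Claim $\eta(T)-\eta(-T)>\omega T-2\pi$) is the technical heart of the lemma: it passes to the Moser--R\"ussmann real-analytic normal form $K=-\alpha I_1+\omega I_2+R(I_1,I_2)$ (this is where real-analyticity is used, and your proposal never invokes it), computes the $\kappa_{ij}$ explicitly along the exact integrable solutions, and bounds $\dot\eta-\bar\omega$ from below by the integrable function $-4\bar\alpha/\cosh(2\bar\alpha t)$ in one regime and shows $\dot\eta>\bar\omega$ at the relevant crossing angles in the other; the bounded deficit $2\pi$ comes from $\int\bar\alpha/\cosh(2\bar\alpha t)\,dt=\pi/2$, not from a soft ``hyperbolic flows turn by less than $\pi$'' principle.

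Second, your perturbative bridge from $H_\lambda$ to the quadratic model is not robust over the unbounded time interval $[-T,T]$. The statement that the linearized flow is $C^0$-close to $e^{tA}$ fails for large $T$ (exponential divergence in the hyperbolic directions); what you can control is the coefficient matrix $S(t)$ pointwise, which perturbs $\dot\theta$ by $O(\delta)$ and is indeed dominated by $\omega T$. But the model lower bound you then need must hold for $S^{H_0}$ evaluated along the \emph{actual} $H_\lambda$-trajectory, whereas the explicit estimates rely on the trajectory being an exact solution of the integrable model (conservation of $q_1p_1$ and $I_2$, the $\cosh(2\bar\alpha t)$ profile, the phase $2\eta-2\bar\omega t$). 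Closing this would require a long-time shadowing statement near the saddle that you do not supply. The paper sidesteps both problems at once by using the analytic normal form, which makes the full Hamiltonian (not just its quadratic part) integrable near the saddle-center, and then handles the frame discrepancy between the normal-form coordinates and the original quaternion frame by a uniformly bounded transition angle. As written, your proposal has a genuine gap precisely at the step you yourself flag as ``the main obstacle.''
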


\begin{proof}
Since $H$ is real-analytic near $0$,  there exist real-analytic symplectic coordinates $(q,p)=\varphi(x,y)$ near $0\in \R^4$ so that $H$ (or $-H$) takes  the form
\begin{equation}\label{eq:KMoser}
K=-\alpha I_1 + \omega I_2 + R(I_1,I_2),
\end{equation}
where $\alpha,\omega>0,$ $I_1=q_1p_1$, $I_2=(q_2^2+p_2^2)/2$ and $R(I_1,I_2)=O(I_1^2+I_2^2)$.
These coordinates are due to J. Moser \cite{moser_coordinates} and H. R\"ussmann \cite{russmann_refinement}. 
The symplectic form in the new coordinates $(q,p)$ is $\sum_i dp_i \wedge dq_i$ and the equations of motion are
$$
\left\{
\begin{aligned}
    \dot q_1 &  = -\bar \alpha q_1 \\
    \dot p_1 &  = \bar \alpha p_1
\end{aligned}\right. \hspace{1.5cm}
\left\{
\begin{aligned}
    \dot q_2 &  =\bar \omega p_2 \\ 
   \dot p_2 & = -\bar \omega q_2
\end{aligned} \right.
$$
where $\bar \alpha:= \alpha - \partial_{I_1}R$ and $\bar \omega: = \omega + \partial_{I_2}R$. Since $I_1$ and $I_2$ are preserved by the flow, $\bar \alpha$ and $\bar \omega$ do not depend on time, and the solutions are
\begin{equation}\label{solutions}
\begin{aligned}
    q_1(t)  = q_1(0)e^{-\bar \alpha t}, \quad p_1(t)  = p_1(0)e^{\bar \alpha t},\\
    q_2(t) + i p_2(t) = e^{-i \bar \omega t} (q_2(0)+ip_2(0)).
\end{aligned}
\end{equation}

%The Lyapunov orbit with energy $E>0$ is the periodic orbit 
%\begin{equation}\label{Lyapunov}
%P_{2,E} = \{q_1=p_1=0, I_2=I_2(E)>0\}\subset K^{-1}(E), \quad E>0 \mbox{ small},
%\end{equation}
%where $I_2(E)=\frac{E}{\omega} + O(E^2)>0$ solves   $E = \omega I_2 + R(0,I_2)$. The local stable and unstable manifolds of $P_{2,E}$ are  
%$W^s_{\rm loc}(P_{2,E}) = \{p_1=0, I_2=I_2(E)\}$ and $W^u_{\rm loc}(P_{2,E}) = \{q_1=0, I_2=I_2(E)\}$, respectively.

 Let $\delta>0$ be small so that in coordinates $(q,p)$ we have $B_\delta(0) \subset \varphi(U_0)$. Let $0<\lambda \leq 1$. Consider a solution $\gamma(t) = (q_1(t),q_2(t),p_1(t),p_2(t))\in  B_{\lambda \delta}(0)$ satisfying 
\begin{equation}\label{signb}
\begin{aligned}
 \quad q_1(0)=p_1(0) = \lambda b>0 \quad &\mbox{ or } \quad q_1(0) = -p_1(0) = \lambda b>0,\\
  \mbox{and }q_2(0) &+ ip_2(0) = 0 + i\lambda r,
\end{aligned}
\end{equation}
for some $b>0$ and $r\geq 0,$ so that $r^2 + 2b^2 < \delta^2.$ The case $b<0$ is analogous. Such trajectories escape $B_{\lambda \delta}(0)$ both forward and backward in time, so they are called escaping trajectories.

Consider the symplectic frame $\{Y_1,Y_2\}$  induced by the quaternions in coordinates $(q,p)$. Notice that it may not coincide with the initial frame $\{X_1, X_2\}$. %However, the frames $\{Y_1,Y_2\}$ and $\{X_1,X_2\}$ differ by a  linear map $D\varphi(0)$ as one gets closer to $0$. 
Let $\eta(t)$ be a continuous argument of a non-trivial linearized solution along $\gamma$ in the frame $\{Y_1, Y_2\}$. 
Observe that 
\begin{equation}\label{cosh}
q_1(t)^2 +p_1(t)^2=2\lambda ^2 b^2 \cosh (2\bar \alpha t)   <\lambda^2\delta^2
\end{equation}
for every $t$ satisfying $\gamma(t) \in B_{\lambda \delta}(0)$.  Let $T>0$ be such that $\gamma(\pm T) \in \partial B_{\lambda \delta}(0).$

\vspace{0.3cm}
\noindent
{\bf Claim.}  If $\delta>0$ is sufficiently small, then
$
\eta(T) - \eta(-T) > \omega T - 2\pi.
$
%where $\omega$ is given as in \eqref{eq:KMoser}.
\vspace{0.2cm}

We follow the computation in \cite{dePaulo_Salomao} to prove the claim. Since
 \begin{align*}  
\kappa_{11} &= \frac{1}{\lvert \nabla K \rvert^2} \left( \bar \alpha ^2 \bar \omega (q_1^2 + p_1^2) - 2 \bar \alpha \bar \omega^2 q_2p_2 + \bar r(q_1 q_2+ p_1 p_2 )^2    \right),\\
\kappa_{12} &=   \frac{1}{\lvert \nabla K \rvert^2} \left( \bar \alpha   \bar \omega^2 (p_2^2 - q_2^2) + \bar r(  q_2 p_1 - q_1 p_2)(q_1q_2 + p_1 p_2)    \right)             ,\\ 
\kappa_{22} &=    \frac{1}{\lvert \nabla K \rvert^2} \left(  \bar \alpha ^2 \bar \omega  (q_1^2 +p_1^2) +  2  \bar   \alpha \bar \omega^2 q_2p_2 + \bar r(q_2 p_1 - q_1p_2)^2    \right)              ,\\
\kappa_{33} &=    \frac{1}{\lvert \nabla K \rvert^2} \left( 2 \bar \alpha^3 q_1p_1 + \bar \omega^3 (q_2^2 + p_2^2)     \right)   ,
\end{align*}
where the functions $\bar r:=r_{11}\bar \omega ^2 +2r_{12}\bar
\alpha \bar \omega +r_{22}\bar \alpha^2$, $r_{ij}:=\partial^2_{I_iI_j}
R %(\epsilon \lambda^2b^2,\lambda^2r^2/2)
, \; i,j=1,2,$ are constant along the trajectories,
we use \eqref{eq:varangle}, \eqref{solutions}, and \eqref{signb}
to find
$$
\begin{aligned}
\dot \eta =  & \bar \omega  +\frac{1}{2\bar \alpha^2 b^2\cosh (2\bar \alpha
t) + \bar \omega^2 r^2}\{\epsilon 2\bar \alpha^3 b^2+( \bar \alpha \bar
\omega^2r^2 -\epsilon \lambda^2b^2r^2 \bar r)\sin (2\eta(t)-2\bar \omega t) +\\
& \lambda^2b^2r^2\bar r[\cosh (2\bar \alpha t) + \sinh (2 \bar \alpha t) \cos (2\eta(t)-2\bar \omega t)]\}.
\end{aligned}
$$
Notice that $\bar r$ is uniformly bounded on $B_\delta(0)$ by some constant $c_1>0$ that does not depend on $\delta$. The sign $\epsilon\in \{+1,-1\}$ in the expression for $\dot \eta$ depends on the initial conditions in \eqref{signb}. Since $\bar \alpha>0$ for $\delta>0$ small, we may assume that $\epsilon = -1.$ The case $\epsilon=+1$ is simpler due to the positivity of the term $\epsilon2\bar \alpha^3b^2$ and was treated in \cite{dePaulo_Salomao}. First we consider the case $\bar \omega^2 r^2 \leq 4\bar \alpha^2 b^2.$
Using \eqref{cosh}, we obtain for every $\delta>0$  sufficiently small
$$
\begin{aligned}
\dot \eta - \bar \omega & >\frac{-\bar \alpha (2\bar \alpha^2b^2 + \bar \omega^2 r^2) -2c_1r^2\delta^2} {2\bar \alpha^2 b^2 \cosh(2\bar \alpha t) +\bar \omega^2 r^2} \\
& \geq  \frac{-\bar \alpha(2\bar \alpha^2b^2 +  4\bar \alpha^2 b^2 )-2c_1\delta^2 4\bar \alpha^2 b^2/\bar \omega^2}{2\bar\alpha^2 b^2\cosh (2\bar \alpha t)+\bar \omega^2 r^2}\\
& > \frac{-4\bar \alpha}{\cosh (2\bar \alpha t)}.
\end{aligned}
$$
Since 
$$
0<\int_{-T}^{T} \frac{\bar \alpha}{\cosh(2\bar \alpha t)} \, dt < \int_{-\infty}^{+\infty} \frac{\bar \alpha}{\cosh(2\bar \alpha t)} \, dt = \frac{\pi}{2},
$$
we obtain 
$
\eta(T) - \eta(-T) > \bar \omega 2 T -  2\pi> \omega T - 2\pi,
$
proving the claim in the case $\bar \omega^2 r^2 \leq 4\bar \alpha^2 b^2$.

 Now assume that 
$\bar \omega^2 r^2  > 4\bar \alpha^2 b^2.$
If $\delta>0$ is sufficiently small and $\eta(t_*) = \bar \omega t_* + \frac{\pi}{4}+ k_0 \pi$ for some $k_0\in \Z,$ then 
$$   
\begin{aligned}
    \dot \eta(t_*) & = \bar \omega +\frac{ \bar \alpha (\bar
\omega^2r^2- 2\bar \alpha^2 b^2)+\lambda^2 b^2r^2 \bar r  +
\lambda^2 b^2r^2\bar r\cosh (2\bar \alpha t_*)}{2\bar \alpha^2 b^2\cosh (2\bar \alpha
t_*) + \bar \omega^2 r^2}\\
& = \bar \omega + \frac{ \bar \alpha( \bar \omega^2 r^2 - 2\bar \alpha^2b^2) + r^2O(\delta^2)}{2\bar \alpha^2 b^2\cosh (2\bar \alpha
t_*) + \bar \omega^2 r^2}\\
& > \bar \omega + \frac{ \bar \alpha\bar \omega^2r^2/2 +r^2O(\delta^2)}{2\bar \alpha^2 b^2\cosh (2\bar \alpha
t_*) + \bar \omega^2 r^2}\\
& > \bar \omega.
\end{aligned}
$$
This forces $\eta(T)- \eta(-T) > \bar \omega 2T - \pi > \omega T - \pi,$
and the claim is proved.

We see from \eqref{solutions} that given $M_0>0$, there exists $0<\delta_{M_0}\ll \delta$ so that if $\gamma([-T,T])\cap B_{\lambda \delta_{M_0}}(0)\neq \emptyset$, then $T> (M_0+2\pi)/\omega$ and thus in view of the claim above we obtain $\eta(T) - \eta(-T) > M_0$.

Now consider the frame $\{X_1, X_2\}$ as in the statement, defined in local coordinates  $(x,y)$.  Up to a projection along the Hamiltonian vector field, we may assume that $\{X_1, X_2\}$ and $\{Y_1, Y_2\}$ span the same plane field transverse to the flow. Hence we may write $Y_1 = a  X_1 + b X_2,$ where $a,b\colon\text{closure}(B_\delta(0)) \setminus \{0\} \to \R$ are smooth. Since $\text{closure}(B_\delta(0)) \setminus \{0\}$ is simply connected, $Y_1\equiv a+ib$ admits a well-defined continuous argument, that is $a+ib \in \R^+ e^{i\hat \zeta}$ for some smooth function $\hat \zeta\colon\text{closure}(B_\delta(0)) \setminus \{0\} \to \R$. 
Let  $C_\lambda:=\sup_{z_1,z_2 \in \partial B_{\lambda \delta}(0)} |\hat \zeta(z_1)-\hat \zeta(z_2)|<+\infty.$ Since the frames $\{X_1,X_2\}$ and $\{Y_1,Y_2\}$ have a $C^\infty_{\text{loc}}$-limit under the re-scaling $K(\lambda q,\lambda p)/\lambda^2\to -\alpha I_1 + \omega I_2$ as $\lambda \to 0^+$,  we may assume that $\sup_{0<\lambda \leq 1} C_\lambda < C<+\infty.$ Let $\gamma(t), t\in [-T,T],$ be a escaping trajectory as above so that $\gamma(0)\in B_{\lambda \delta_{M_0}}(0)$ and $\gamma(\pm T)\in \partial B_{\lambda \delta}(0)$. Consider a non-trivial solution to the linearized flow along $\gamma(t)$ in the frame $\{X_1, X_2\}$, let $\theta(t),\eta(t)$ be continuous arguments of this solution in the frames $\{X_1,X_2\}$ and $\{Y_1,Y_2\}$, respectively. Then $\theta(T) - \theta(-T) > \eta(T) - \eta(-T) - 2C-2\pi.$ Hence, choosing $M_0>0$ as above so that $M_0 > 2C+\pi + M,$ we obtain that $\theta(T) - \theta(-T)> M,$ as desired. 

Finally, taking $\delta_M>0$ even smaller if necessary, we obtain the desired sets in local coordinates by defining $U_1:=\text{closure}(B_\delta(0))$ and $U_M := \text{closure}(B_{\delta_M}(0))$.
\end{proof}

Now, we use Lemma \ref{lem_deltatheta} and some properties of mechanical systems to show that every periodic orbit sufficiently close to a saddle-center has an arbitrarily high index.

\begin{proposition}\label{prop_indicealto} Let $0\in \R^4$ be a saddle-center of a real-analytic mechanical Hamiltonian $H(x,y)=|y|^2/2 + V(x)$ and let $M>0$ be given. Then  there exists a small compact neighborhood $V_M\subset \R^4$ of $0$ such that if $\gamma\subset H^{-1}(E), \gamma \neq P_{2,E},$ is a periodic orbit  intersecting $V_M$, then $\mu(\gamma)>M.$ Here, $P_{2,E}\subset H^{-1}(E)$ is the Lyapunov orbit near $0$. 
\end{proposition}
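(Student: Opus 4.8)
The plan is to reduce the statement to the local winding estimate of Lemma \ref{lem_deltatheta} and then convert the winding accumulated over a single passage near the saddle-center into a lower bound for the Conley–Zehnder index of the whole orbit. The first step is to record the local dynamical picture in Moser's normal form \eqref{eq:KMoser}. There the action $I_1 = q_1p_1$ is constant along trajectories (see \eqref{solutions}), and the only trajectories that remain in a small neighborhood of $0$ for all time lie on the center manifold $\{q_1=p_1=0\}$, which at energy $E$ is exactly the Lyapunov orbit $P_{2,E}$. Consequently, a periodic orbit $\gamma\neq P_{2,E}$ meeting a sufficiently small neighborhood of $0$ cannot be asymptotic to the center nor confined to it, so near $0$ it has $I_1\neq 0$ and crosses the neighborhood as an escaping trajectory: it enters $U_1$, reaches the inner neighborhood, and exits $U_1$ both forward and backward in time. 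This is precisely the hypothesis of Lemma \ref{lem_deltatheta}.

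Given the target $M$, I would then apply Lemma \ref{lem_deltatheta} with a much larger value $M'\gg M$ to produce nested neighborhoods $U_{M'}\subset U_1$ of $0$, and set $V_M\subset U_{M'}$. For a periodic orbit $\gamma\neq P_{2,E}$ meeting $V_M$, each maximal time interval $[t_1,t_2]$ during which $\gamma$ stays in $U_1$ while passing through $U_{M'}$ contributes winding $\theta(t_2)-\theta(t_1)>M'$ for \emph{every} nontrivial transverse linearized solution \eqref{eq:transflow}. Writing the winding interval $I$ of \eqref{eq:vaiinterval} for the full period, the definition of $\mu$ gives $\mu(\gamma)\geq 2\lfloor \min I\rfloor-1$, so it suffices to show $\min I$ is large; and $\min I$ is at least the saddle contribution $M'$ plus the winding accumulated over the complementary arcs (where $\gamma$ lies outside $U_1$). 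Thus the whole problem reduces to bounding the \emph{exterior} winding from below by a constant independent of $\gamma$ and $E$: once such a bound $-C$ is available, taking $M'>2\pi M+C+2\pi$ forces $\mu(\gamma)>M$.

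The main obstacle is exactly this control of the exterior winding, because the transverse Hessian $S(t)$ in \eqref{eq:transflow} need not be positive semidefinite away from the brake points, so $\dot\theta$ can genuinely be negative in the bulk and a naive pointwise estimate fails. I would handle it by a contradiction-and-compactness argument. Suppose the proposition failed for some $M$; then there would be periodic orbits $\gamma_n\neq P_{2,E_n}$ with $\mu(\gamma_n)\leq M$ penetrating the $1/n$-neighborhoods of $0$. By the previous step, applied with deeper neighborhoods $U_{M_k}$ and the scaling freedom $\lambda\in(0,1]$ in Lemma \ref{lem_deltatheta}, the winding accumulated during their saddle passages tends to $+\infty$. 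On the other hand, outside a fixed neighborhood $U_1$ the orbits range in a fixed compact set, and after passing to a subsequence their exterior arcs converge to a limiting trajectory (or broken chain), whose winding is finite; hence the exterior winding of $\gamma_n$ stays bounded below. The total winding over a period would then diverge, giving $\mu(\gamma_n)\to+\infty$ and contradicting $\mu(\gamma_n)\leq M$.

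The delicate point in carrying this out is ensuring the exterior arcs themselves do not accumulate unbounded negative winding in the limit, e.g.\ through an increasing number of near-saddle returns. I expect this to follow from the fact that each saddle passage contributes definite positive winding once $M'$ exceeds the bounded per-arc exterior loss, so a bounded index $\mu(\gamma_n)\leq M$ forces a bounded number of passages, together with the $C^\infty_{\mathrm{loc}}$-convergence of the rescaled Hamiltonians to the quadratic model described in section \ref{sec_rescaling}, which makes the limiting object explicit and its winding manifestly finite. Finally, shrinking $U_{M'}$ and $V_M$ as needed produces the desired compact neighborhood in the original coordinates.
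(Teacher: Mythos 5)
Your reduction of the problem is the right one, and it matches the paper's strategy up to the decisive step: the saddle passage contributes winding $>M'$ by Lemma \ref{lem_deltatheta}, and everything hinges on bounding the winding of the complementary arcs from below by a constant independent of $\gamma$, $E$, and the length of those arcs. However, the compactness-and-contradiction argument you propose for that bound does not close. The exterior arcs of a periodic orbit meeting $V_M$ have no a priori bound on their time length or on the number of returns to the saddle region: they can shadow arbitrarily long chains of homoclinic/heteroclinic connections, so a subsequential limit is at best a broken trajectory with pieces of infinite length, and there is no reason its winding is finite without further structure. Your attempt to rule this out is circular: you bound the number of saddle passages by assuming "$M'$ exceeds the bounded per-arc exterior loss," but the existence of a per-arc bound (uniform over arcs of unbounded length) is precisely the statement in question. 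For a general (non-mechanical) Hamiltonian with a saddle-center, the transverse linearized flow can rotate clockwise for an arbitrarily long time in the bulk, and no such bound holds; so any correct proof must use the mechanical form of $H$, which your argument never does.

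The paper supplies the missing ingredient with a pointwise monotonicity property special to $H=|y|^2/2+V(x)$. On $\{y\neq 0\}$ it introduces the transverse frame $\{\hat X_1,\hat X_2\}$ given by the vertical and horizontal lifts of $(-y_2,y_1)$ and shows that a transverse linearized solution $\alpha_1\hat X_1+\alpha_2\hat X_2$ satisfies $\dot\alpha_2>0$ whenever $\alpha_2=0$ and $\alpha_1>0$: the ray $\{\alpha_2=0,\ \alpha_1>0\}$ can only be crossed in one direction, so the continuous argument can never decrease by more than $\pi$ over \emph{any} time interval, regardless of its length. Since $\hat X_2$ is parallel to $X_1$, the two frames do not wind relative to each other, and the finitely many brake points (where $y=0$) contribute a bounded correction; this yields the uniform lower bound $-C$ on the exterior winding with $C$ independent of the orbit, after which choosing $M'>M+C$ finishes the proof exactly as you outline. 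Without this (or an equivalent use of the mechanical structure), your argument has a genuine gap at its central step.
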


\begin{proof} 
Consider the transverse frame $\{X_1, X_2\}$ induced by the quaternions and defined on the regular points of $H$.  Consider also the transverse frame on $TH^{-1}(E) \setminus \{y_1=y_2=0\}$ induced by the vertical and horizontal lifts of $(y_1,y_2)^\perp = (-y_2,y_1)$ under the projection $(x,y)\mapsto x$. Indeed,  there exist non-vanishing independent vector fields $\hat X_1,\hat X_2$ tangent to $H^{-1}(E) \setminus \{y_1=y_2=0\}$ given by
$$
\begin{aligned}
 \hat X_1 & = (0,0,-y_2,y_1)\\
& = - (y_1V_{x_1}+y_2V_{x_2})gX_1 + 2(E-V(x))gX_2 + (y_2V_{x_1}-y_1V_{x_2})X_3,\\
 \hat X_2 & = (2(E-V(x)))^{-1}(-y_2,y_1,-V_{x_2},V_{x_1})\\
& =-(2(E-V(x))g)^{-1}X_1,\\
\end{aligned}
$$
where $V_{x_i} = \partial_{x_i}V$, $i=1,2,$ $g =(V_{x_1}^{2}+V_{x_2}^{2}+y_{1}^{2}+y_{2}^{2})^{-1/2},$  so that  ${\rm span}\{\hat X_1,\hat X_2\}$ is transverse to the Hamiltonian vector field $X_H=(y_1,y_2,-V_{x_1},-V_{x_2})$. A linearized solution $\eta = \alpha_1 \hat X_1 + \alpha_2 \hat X_2 +\alpha_3X_H,$ with $\alpha_1^2+\alpha_2^2 \neq 0,$ along a trajectory $\gamma$ satisfies the following condition: if
$\alpha_2 =0$ and $\alpha_1>0$, then  $\dot \alpha_2 >0.$
Indeed, from $\dot \eta = DX_H(\gamma) \eta$ and $DX_H \cdot X_H = \frac{d}{dt}X_H \circ \gamma$, we obtain
$$
\alpha_1 \langle DX_H\cdot \hat X_1, \hat X_2\rangle = \alpha_1 \langle\hat X_1',\hat X_2\rangle + \dot \alpha_2 \langle\hat X_2,\hat X_2\rangle,
$$
where $\hat X_1' = \frac{d}{dt}\hat X_1 \circ \gamma$. Since  
$$
\langle DX_H \cdot \hat X_1, \hat X_2\rangle = \langle(-y_2,y_1,0,0),\hat X_2 \rangle > 0,
$$
and
$$
\langle\hat X_1',\hat X_2\rangle=\langle (0,0,V_{x_2},-V_{x_1}),\hat X_2\rangle \leq 0,
$$
we conclude that $\dot \alpha_2>0$ if $\alpha_2=0$ and $\alpha_1>0$. In particular, the linearized flow in the frame $\{\hat X_1,\hat X_2\}$ does not rotate backward more than $\pi$. This means that if $\eta(t)$ is a continuous argument of $\alpha_1(t) + i \alpha_2(t)\neq 0$ and $\eta(t) = \eta_*,$ then $\eta(s) > \eta_* - \pi$ for every $s>t$.  Since $\hat X_2$ is parallel to $X_1$, the frame induced by $\{\hat X_1,\hat X_2\}$ and $\{X_1,X_2\}$ do not wind with respect to each other. Hence, the argument $\theta(t)$ of a linearized solution in the frame $\{X_1, X_2\}$ satisfies a similar property.  As a final remark, if a simple periodic orbit touches the boundary of the Hill region, then the contribution to the variation of the argument $\theta(t)$ is bounded since that happens at most twice along the minimal period. 

Given $M'>0$, we know from Lemma \ref{lem_deltatheta} that there exist small neighborhoods $U_{M'}\subset U_1\subset \R^4$ of $0$ so that if $\gamma\neq P_{2,E}\subset H^{-1}(E), E>0$ small, is a simple periodic trajectory intersecting $U_{M'}$, and not contained in $U_1$, then the variation in the argument $\eta(t)$ of a non-trivial transverse linearized solution inside $U_1$ in the frame $\{X_1,X_2\}$ is greater than $M'$. From the reasoning above, the total variation of $\eta(t)$ along the whole period of $\gamma$  is greater than $ M' - C$, for some fixed $C>0$ that does not depend on $\gamma$ or $M'$. If $M'>0$ is taken sufficiently large, then $\mu(\gamma) > M$, as desired. \end{proof}

\begin{corollary} Let $0\in \R^4$ be a saddle-center of a real-analytic mechanical Hamiltonian $H(x,y) =|y|^2/2 + V(x)$.  Given $M>0$, there exists $E_M>0$ so that if $0<E<E_M$ and $\gamma\subset H^{-1}(E)$ is a simple periodic orbit, linked with the Lyapunov orbit $P_{2, E}\subset H^{-1}(E)$ near $0$, then $\mu(\gamma) > M.$ Iterates of $\gamma$ also have index greater than $M$.
\end{corollary}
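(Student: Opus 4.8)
The plan is to deduce the corollary from Proposition~\ref{prop_indicealto} by showing that, once the energy is small enough, any orbit linked with $P_{2,E}$ is forced to enter the fixed neighborhood of the saddle-center on which the high-index estimate already holds. First I would invoke Proposition~\ref{prop_indicealto} with the given $M$ to fix, once and for all, a compact neighborhood $V_M\subset\R^4$ of $0$ such that every periodic orbit in $H^{-1}(E)$ distinct from $P_{2,E}$ and meeting $V_M$ has index greater than $M$. Since $V_M$ is chosen independently of $E$, the remaining task is purely geometric: to exhibit a disk transverse to the flow, bounded by the Lyapunov orbit $P_{2,E}$, that sits inside $V_M$ for all sufficiently small $E$.

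Here I would use the re-scaling of section~\ref{sec_rescaling}, normalizing the re-scaled energy to $1$, so that the actual energy is the small parameter $\epsilon$ and $H^{-1}(\epsilon)$ corresponds to $\hat H_\epsilon^{-1}(1)$. In these coordinates the Lyapunov orbit corresponds to $\hat P_{2,\epsilon,1}$, which bounds the embedded disks $\hat U_{1,\epsilon,1},\hat U_{2,\epsilon,1}$ transverse to the flow and $C^\infty$-close to the explicit limiting disks $\hat U_{1,0,1},\hat U_{2,0,1}\subset\hat H_0^{-1}(1)$; the latter lie in a fixed bounded region of the re-scaled space. Undoing the re-scaling multiplies everything by $\sqrt{\epsilon}$, so the disks bounded by $P_{2,\epsilon}$ in $H^{-1}(\epsilon)$ are contained in a ball of radius $O(\sqrt{\epsilon})$ about $0$. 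Hence there is $E_M>0$ with $U_{1,\epsilon}\cup U_{2,\epsilon}\subset V_M$ whenever $0<\epsilon<E_M$; writing $E=\epsilon$ for the actual energy, this reads $U_{1,E}\cup U_{2,E}\subset V_M$ for $0<E<E_M$. Now if $\gamma\subset H^{-1}(E)$ is simple and linked with $P_{2,E}$, the linking argument recalled at the end of section~\ref{sec_rescaling} shows that $\gamma$ meets $U_{1,E}$ (indeed both disks), hence meets $V_M$; since $\gamma\neq P_{2,E}$, Proposition~\ref{prop_indicealto} yields $\mu(\gamma)>M$.

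Finally, any iterate $\gamma^k=(w,kT)$ has the same image as $\gamma$, so it again meets $V_M$ and is distinct from every iterate of $P_{2,E}$; Proposition~\ref{prop_indicealto} therefore applies verbatim and gives $\mu(\gamma^k)>M$. I expect the only delicate point to be the bookkeeping in the middle paragraph: matching the corollary's energy $E$ with the product of re-scaled energy and $\epsilon$ from section~\ref{sec_rescaling}, and checking that the transverse disks $\hat U_{i,\epsilon,1}$ remain in a bounded region uniformly as $\epsilon\to0^+$, so that their $\sqrt{\epsilon}$-scalings genuinely shrink into $V_M$. Everything else is a direct combination of the two facts already established, namely the high-index estimate near the saddle-center and the fact that a linked orbit must pierce the transverse disks bounded by the Lyapunov orbit.
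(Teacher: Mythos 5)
Your proposal is correct and follows essentially the same route as the paper: fix $V_M$ from Proposition~\ref{prop_indicealto}, show that the two transverse disks bounded by the Lyapunov orbit shrink into $V_M$ as the energy tends to zero, and conclude because a linked orbit must pierce those disks. The only (immaterial) difference is that you locate the disks via the re-scaled coordinates of section~\ref{sec_rescaling}, whereas the paper exhibits them explicitly in the Moser normal form coordinates of Lemma~\ref{lem_deltatheta}, where $I_1(E),I_2(E)\to 0$ makes the shrinking immediate.
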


\begin{proof}
    Take $V_M$ as in Proposition \ref{prop_indicealto}. Consider the local coordinates $(q,p)$ as in the proof of Lemma \ref{lem_deltatheta}, so that the Hamiltonian has the form $K=-\alpha I_1+ \omega I_2 + R(I_1,I_2)$, where $I_1=q_1p_1$ and $I_2 = (q_2^2+p_2^2)/2$. Recall that in these coordinates, the Lyapunov orbit is given by $P_{2,E} = \{q_1=p_1 =0\} \cap K^{-1}(E),$ where $E>0$ is small. Notice that $P_{2,E}$ is the boundary of the embedded disks
$$
\begin{aligned}
U_{1,E} & := \{ p_1=-q_1\geq 0, I_1(E)\leq q_1p_1 \leq 0, I_2 \leq I_2(E)\} \cap K^{-1}(E),\\
U_{2,E} & := \{ p_1=-q_1\leq 0, I_1(E)\leq q_1p_1 \leq 0, I_2 \leq I_2(E)\} \cap K^{-1}(E),
\end{aligned}
$$
where $I_1(E)=-\frac{E}{\alpha} + O(E^2)<0$ solves $E = -\alpha I_1 +R(I_1,0)$, and $I_2(E)>0$ solves $E= \omega I_2 +R(0,I_2)$, for every $E>0$ small.  Observe that the interior of such disks is transverse to the flow, and $I_1(E), I_2(E) \to 0$ as $E \to 0$. Hence we find $E_M>0$ sufficiently small such that $U_{1,E},U_{2,E}\subset V_M$ for every $0<E<E_M$. In particular,
 any periodic orbit $\gamma\subset H^{-1}(E),$ with $0<E<E_M,$ that is linked with $P_{2,E}$, must intersect $U_{1,E}$ and $U_{2,E}$ and thus intersects $V_M$. This implies $\mu(\gamma)>M$.  
\end{proof}

\begin{proof}[Proof of Proposition \ref{prop_weakconv}]
Recall from the proof of Lemma \ref{lem:sl} that in re-scaled coordinates $\hat x=x/\sqrt{\epsilon}$ near the saddle $v_i$, the re-scaled potential $\tilde V_\epsilon(\hat x) $ converges in $C^\infty_{\rm loc}$ to $\tilde V_0(\hat x): = a \hat x_1^2/2 + b \hat x_2^2/2 + f(\hat x_1)$ as $\epsilon \to 0^+$, where $a<0$, $b>0$, and $f$ satisfies \eqref{eq_cutfunction}.  Denote $\tilde H_\epsilon(\hat x,\hat y):=|\hat y|^2/2+\tilde V_\epsilon(\hat x).$ Recall that $\{\tilde V_\epsilon\leq 0\}$ contains a disk-like region $\tilde K_{i,\epsilon} \subset \{0\leq \hat x_1<2\}$ with a singularity at $0\in \R^2$, corresponding to the capping of $K_0$ near  $v_i$.

For $M>0$ large, consider the sets $U_M \subset U_1$ as in Lemma \ref{lem_deltatheta}. In re-scaled coordinates $(\hat x,\hat y)$, they become $\tilde U_M:=\epsilon^{-1}U_M \subset \tilde U_1:=\epsilon^{-1} U_1$. There exists $\delta_1>0$ such that for every $\epsilon >0$ sufficiently small, we find $0<\lambda\leq 1$ satisfying $B_{\delta_1}(0) \subset \lambda \tilde U_M$ and $\lambda \tilde U_1 \subset \{\hat x_1 <1\}$. This implies that for $M>0$   large,   any periodic orbit  $P \subset  \tilde H^{-1}_\epsilon(E), P\neq P_{2,E},$ intersecting $B_{\delta_1}(0)$, has index greater than~$3$. 

We shall find  $\epsilon>0$ small enough so that $\tilde H_\epsilon^{-1}(E)$ is dynamically convex on $0<\hat x_1< 2$ for every $E>0$ sufficiently small. Indeed, assume by contradiction the existence, for every $\epsilon>0$ small and $E>0$ arbitrarily close to $0$, of an orbit $\tilde Q_{2,\epsilon, E}$ contained in the capped region $\tilde H_\epsilon^{-1}(E)\cap \{0<\hat x_1< 2\}$ so that its index is $\leq 2$. From the considerations above, we find $\epsilon_1>0$ small so that $\tilde Q_{2,\epsilon,E} \subset \{\epsilon_1<\hat x_1<2\}$ for every $\epsilon>0$ sufficiently small and $E>0$ arbitrarily small. The limiting dynamics as $\epsilon\to 0^+$ is determined by the decoupled Hamiltonian $\tilde H_0=\tilde H_1(\hat x_1,\hat y_1) + \tilde H_2(\hat x_2,\hat y_2)$, where  $\tilde H_1=\hat y_1^2/2 +a \hat x_1^2/2 + f(\hat x_1)$ and $\tilde H_2=\hat y_2^2/2 + b \hat x_2^2/2$.  
Notice that the orbit $\tilde Q_{2,\epsilon, E}\subset \tilde H^{-1}_\epsilon(E), E>0$ small, cannot get arbitrarily close to $0$ in the $(\hat x_2,\hat y_2)$-plane as $E \to 0^+$, otherwise it would intersect $\hat x_1 = \epsilon_1$, a contradiction. Hence, if $\epsilon>0$ is fixed sufficiently small, the action of $\tilde Q_{2,\epsilon, E}$ is uniformly bounded. Otherwise, its index would be $>2$ due to the contribution to the index of the linearized flow on the $(\hat x_2,\hat y_2)$-plane. Taking the limit $E \to 0^+,$ and then $\epsilon\to 0^+$, we obtain a periodic orbit $\tilde Q\subset \tilde H_0^{-1}(0)\cap \{\epsilon_1 \leq \hat x_1 <2\}$ with index $\leq 2$. This is a contradiction with Proposition \ref{prop_decoupled1} below; in fact, it follows from \eqref{eq_cutfunction} and Proposition \ref{prop_decoupled1} that, except for the index-2 orbit $\hat P_{2,0,E}=\{\hat x_1=\hat y_1=0, b\hat x_2^2+\hat y_2^2=2E\}$, all periodic orbits in $\tilde H_0^{-1}(E)$ have index $\geq 3$.  We conclude that $\tilde H_\epsilon^{-1}(E)$ is dynamically convex in the capped region $0<\hat x_1<2$ for every  $\epsilon>0$ fixed sufficiently small and $E>0$ sufficiently small. 

For the original capped Hamiltonian $H_\epsilon$, with $\epsilon>0$ sufficiently small as above, we see that since $S_0$ is dynamically convex and admits a positive frame, $W_{\epsilon, E}$ is weakly convex and the Lyapunov orbits near $p_i$ are the only index-$2$ orbit for $E>0$ sufficiently small. Indeed, if another orbit $Q_{2,\epsilon, E}\subset W_{\epsilon, E}$ exists, then the argument above shows that it cannot be contained in the capped region and stays away from all $p_i$ as $E \to 0^+$. Due to the positive frame for $S_0$, its period is uniformly bounded in $E>0$, and thus converges up to a subsequence to a periodic orbit $Q_{2,\epsilon,0}\subset S_0 \setminus \{p_1,\ldots,p_l\}$. Its index is $\leq 2$,  contradicting the dynamical convexity of $S_0$.

Finally, let us check that the Lyapunov orbits have the smallest actions among all periodic orbits. We shall see in Section \ref{sec:liouville} below that $H^{-1}_\epsilon(E)$ admits a  contact form $\lambda_E$ which converges in $C^\infty_{\text{loc}}$ to $\lambda_0$ as $E \to 0^+$, where $\lambda_0$ is a contact form on $H_\epsilon^{-1}(0)\setminus \{p_1,\ldots,p_l\}$. 
The action of a periodic orbit $Q$ of $\lambda_E$ is defined by $\int_{Q} \lambda_E$. Fix $\epsilon>0$ sufficiently small as before and consider re-scaled coordinates $(\tilde x,\tilde y) = (x/\sqrt{E},y/\sqrt{E})$ near some $p_i$. Let  $\tilde H_E (\tilde x, \tilde y) := H_\epsilon(x,y)/E$. Then $H_\epsilon^{-1}(E)$ corresponds to $\tilde H_E^{-1}(1)$. As $E \to 0$, the Lyapunov orbit $P_{2,E}\subset H_\epsilon^{-1}(E)$ in coordinates $(\tilde x,\tilde y)$ converges in $C^\infty$  to $\{\tilde x_1=\tilde y_1=0, b\tilde x_2^2+\tilde y_2^2=2\}$. Hence, for $E>0$ sufficiently small, $P_{2,E}$ has action $2\pi E/ \sqrt{b} + O(E^2),$ that is $\mathcal{A}(P_{2,E}) = O(E)\to 0$ as $E \to 0$. Suppose, by contradiction, that, for every $E>0$, there exists a periodic orbit $Q_E\subset  H_\epsilon^{-1}(E)$ so that $Q_E$ is not a Lyapunov orbit and $\mathcal{A}(Q_E) \to 0$ as $E \to 0$. Since the Lyapunov orbits are hyperbolic and $Q_E$ is not a Lyapunov orbit, we see that $Q_E$ cannot converge to any $p_i$. Hence, up to a subsequence, $Q_E$ converges to non-constant trajectories of $\lambda_0$, possibly homoclinic/heteroclinic orbits connecting the saddle-centers,  contradicting  $\mathcal{A}(Q_E)\to 0$ as $E \to 0^+$.  
\end{proof}

\subsection{The contact property}\label{sec:liouville}
In this section, $H=H_\epsilon =\frac{1}{2}|y|^2+V_\epsilon(x)$ is as in Proposition \ref{prop_weakconv}, where $\epsilon>0$ is fixed sufficiently small. We show that for $E>0$ sufficiently small, the regular sphere-like hypersurface $W_E= H^{-1}(E)$ near $S_0$  has contact type, and the induced contact structure $\xi=\ker \lambda_E$ is tight; see, for instance, \cite[Lemma 4.1]{abbas2019holomorphic}. The critical set $H^{-1}(0)$ is denoted by $W_0$. It contains $S_0$ whose projection to the $x$-plane is the compact disk-like set $K_0$.

In the next section, we shall define an almost complex structure $J_E$ on $\R \times W_E$, which admits a pair of holomorphic planes asymptotic to the Lyapunov orbits, see section \ref{sec:holocurves}. To control the location of these holomorphic planes, it will be necessary to choose a particular contact form $\lambda_E$ on $W_E$ as in the following statement.

\begin{proposition}\label{prop:transverseLiouville} The following assertions hold.
\begin{itemize}
    \item[(i)] For every $E>0$ sufficiently small,  there exists a Liouville vector field  $X_E$ defined on a neighborhood of the sphere-like hypersurface $W_E=H^{-1}(E)\subset \R^4$, which is  transverse to $W_E$ and the induced contact form $\lambda_{E}:= -\iota_{X_E}\omega_0$ on $W_{E}$ is tight. Moreover, for suitable symplectic coordinates near each saddle-center $p_i$ so that the Hamiltonian has the form $H = \frac{1}{2}(y_1^2 +y_2^2+  a x_1^2  + b x_2^2) + R(x), $ where $a<0, b >0, R = O(|x|^3),$ we can assume that $X_E = \frac{1}{2}(x_1\partial_{x_1} + x_2 \partial_{x_2} + y_1 \partial_{y_1} + y_2 \partial_{y_2})$ and thus $\lambda_E=\frac{1}{2} (x_1dy_1-y_1dx_1+x_2dy_2-y_2dx_2)|_{W_E}$  for every $|x_1|<c\sqrt{E}$, where $c>0$ is independent of $E>0$. 
    \item[(ii)] There exists a Liouville vector field $X_0$ defined on a neighborhood of $\dot W_0:= H^{-1}(0) \setminus \{p_1,\ldots,p_l\}$, which is transverse to $\dot W_0$. Given any neighborhood $\mathcal{U}\subset \R^4$ of $\{p_1,\ldots,p_l\}$, we have $X_0=X_E$ outside $\mathcal{U}$ for every $E>0$ sufficiently small. In particular,  $\lambda_E \to \lambda_0$ in $C^\infty_{\text{loc}}(\dot W_0)$ as $E\to 0^+$, where $\lambda_0:=-\iota_{X_0}\omega$ is the induced contact form on $\dot W_0$. 
    \end{itemize}
\end{proposition}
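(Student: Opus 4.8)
The plan is to construct $X_E$ by gluing two model Liouville vector fields: the radial one near the saddle-centers, where the prescribed normal form lives, and a modified tautological field on the remaining bulk. I would carry out the gluing at the level of primitives of $\omega_0$, so that the Liouville identity $\mathcal{L}_{X_E}\omega_0=\omega_0$ is automatic, and then verify the transversality $dH(X_E)>0$ on $W_E$ region by region. In the symplectic normal coordinates where $H=\tfrac12(y_1^2+y_2^2+ax_1^2+bx_2^2)+R(x)$ with $R=O(|x|^3)$ (Moser \cite{moser_coordinates}, R\"ussmann \cite{russmann_refinement}), the radial field $X^{\mathrm{rad}}=\tfrac12(x_1\partial_{x_1}+x_2\partial_{x_2}+y_1\partial_{y_1}+y_2\partial_{y_2})$ satisfies $-\iota_{X^{\mathrm{rad}}}\omega_0=\tfrac12(x_1dy_1-y_1dx_1+x_2dy_2-y_2dx_2)$, which is the claimed form of $\lambda_E$. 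A direct computation gives $dH(X^{\mathrm{rad}})=\tfrac12\langle x,\nabla V\rangle+(E-V)$, and since $\langle x,\nabla V\rangle=2V+O(|x|^3)$ this equals $E+O(|x|^3)$; as $|x|,|y|=O(\sqrt E)$ on $W_E$ near $p_i$, we obtain $dH(X^{\mathrm{rad}})=E\bigl(1+O(\sqrt E)\bigr)>0$ for $|x_1|<c\sqrt E$ with $c$ independent of $E$. This yields both the normal form and local transversality near the saddles.

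On the region away from the saddle-centers I would use the tautological field $Y:=y_1\partial_{y_1}+y_2\partial_{y_2}$, for which $-\iota_Y\omega_0=-(y_1dx_1+y_2dx_2)$ and $dH(Y)=|y|^2=2(E-V)$. This is positive in the interior of the Hill region but vanishes exactly on the zero-velocity curve $\{y=0\}\cap W_E$, so I restore transversality there by a Hamiltonian correction. Taking $g:=-\psi(x)\langle\nabla V(x),y\rangle$ with $\psi\ge 0$ a cutoff, depending on $x$ only and supported near $\partial\Omega_0=\partial K_0$ (hence near $\partial\Omega_E$ for $E$ small, and independent of $E$), the field $X^{\mathrm{bulk}}:=Y+X_g$, where $\iota_{X_g}\omega_0=dg$, is again Liouville, and $dH(X_g)=\sum_i(-V_{x_i}\partial_{y_i}g+y_i\partial_{x_i}g)$ gives $dH(X^{\mathrm{bulk}})|_{y=0}=\psi\,|\nabla V|^2$. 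Since the only singular point of the zero-velocity curve on the critical level is the saddle itself, which lies in the radial region and is excluded from $\dot W_0$, all relevant zero-velocity points are regular, and a compactness argument (choosing $\psi$ large enough on its support) makes $dH(X^{\mathrm{bulk}})>0$ on the whole bulk of $W_E$ and of $\dot W_0$.

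To glue, I note that $-\iota_{X^{\mathrm{rad}}}\omega_0$ and $-\iota_{X^{\mathrm{bulk}}}\omega_0$ are both primitives of $-\omega_0$ on the contractible normal-coordinate ball, so their difference is $dh$ for some function $h$; I set $\lambda_E:=-\iota_{X^{\mathrm{bulk}}}\omega_0+d(\chi h)$, where $\chi=1$ for $|x_1|<c\sqrt E$ and $\chi=0$ for $|x_1|>2c\sqrt E$. Then $d\lambda_E=-\omega_0$, so the associated field $X_E$ is Liouville, $\lambda_E$ equals the radial primitive near the saddles and $-\iota_{X^{\mathrm{bulk}}}\omega_0$ on the bulk. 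The main obstacle is transversality in the transition slab $\{c\sqrt E\le|x_1|\le 2c\sqrt E\}$, where the extra term $d(\chi h)$ perturbs $dH(X_E)$ by a quantity a priori comparable to the leading term. I expect to resolve this in the rescaled coordinates $(\hat x,\hat y)=(x,y)/\sqrt E$: both $X^{\mathrm{rad}}$ and $Y$ are scale-invariant, the slab becomes the fixed slab $\{|\hat x_1|\le 2c\}$, and $\tilde H_E$ converges in $C^\infty_{\mathrm{loc}}$ to the model $\hat H_0$ (section \ref{sec_rescaling}). Thus transversality of the glued field reduces to an $E$-independent computation on $\{\hat H_0=1\}$ together with an $O(\sqrt E)$ perturbation, which closes for $E$ small. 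Tightness of $\xi=\ker\lambda_E$ then follows because $W_E\cong S^3$ bounds a Liouville domain in $(\R^4,\omega_0)$, cf.\ \cite[Lemma 4.1]{abbas2019holomorphic}.

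Finally, for part (ii), the correction $d(\chi h)$ is supported in $\bigcup_i\{|x_1-(v_i)_1|\le 2c\sqrt E\}$, so any fixed neighborhood $\mathcal{U}$ of $\{p_1,\dots,p_l\}$ contains these slabs once $E$ is small; hence $X_E=X^{\mathrm{bulk}}$ outside $\mathcal{U}$. Since $X^{\mathrm{bulk}}$ is an $E$-independent vector field transverse to every $W_E$ and, by the $C^\infty_{\mathrm{loc}}$-convergence $W_E\to\dot W_0$ (section \ref{sec_rescaling} and Proposition \ref{prop_capping}), also transverse to $\dot W_0$, I set $X_0:=X^{\mathrm{bulk}}$ on a neighborhood of $\dot W_0$ and $\lambda_0:=-\iota_{X_0}\omega_0$. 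Then $X_0=X_E$ outside $\mathcal{U}$ for $E$ small, and consequently $\lambda_E\to\lambda_0$ in $C^\infty_{\mathrm{loc}}(\dot W_0)$ as $E\to 0^+$.
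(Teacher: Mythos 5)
Your overall architecture matches the paper's: a radial Liouville field near the saddle-centers (giving the prescribed normal form), the field $y_1\partial_{y_1}+y_2\partial_{y_2}$ on the bulk corrected by a Hamiltonian term supported near the zero-velocity curve, a gluing performed at the level of primitives (equivalently, adding Hamiltonian vector fields, since $-\iota_{X_F}\omega_0=dF$), and region-by-region verification of $dH(X_E)>0$. Part (ii) and the tightness argument are also as in the paper. The computations you do carry out ($dH(X^{\mathrm{rad}})=E+O(|x|^3)$, $dH(X^{\mathrm{bulk}})|_{y=0}=\psi|\nabla V|^2$) are correct, modulo the remark that the cutoff $\psi$ should be taken \emph{small} rather than ``large enough'': the correction $X_g$ also contributes terms linear in $y$ of size $\|\psi\|_{C^1}|y|$ that must be absorbed by $|y|^2+\psi|\nabla V|^2$, which is exactly why the paper carries an $\varepsilon$ in front of its analogous correction $f$.

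The genuine gap is the transversality in the transition slab, which you flag as ``the main obstacle'' but do not close. With $h=\tfrac12(x_1y_1+x_2y_2)$ (the primitive difference between the radial and tautological fields) and a cutoff $\chi=\chi(x_1)$, the perturbation of $dH$ is $\chi\,dH(X_h)+h\,dH(X_\chi)$, and the second term equals $-\tfrac12\chi'(x_1)\bigl(x_1y_1^2+x_2y_1y_2\bigr)$ up to sign conventions. The piece $-\tfrac12\chi'x_1y_1^2$ has a favorable sign, but $-\tfrac12\chi'x_2y_1y_2$ is sign-indefinite and of size $|\chi'|\,|x_2|\,|y|^2\sim\frac{1}{c\sqrt E}\cdot\sqrt E\cdot E=O(E)$ --- the same order as the leading positive terms $\chi E+(1-\chi)|y|^2$. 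Rescaling by $\sqrt E$ does not help: in the fixed slab $\{c\le\hat x_1\le 2c\}$ of $\{\hat H_0=1\}$ every quantity is $O(1)$, and the indefinite term $-\tfrac12\hat\chi'\hat x_2\hat y_1\hat y_2$ can be estimated below only by $-\frac{\sqrt{2/b}}{2c}|\hat y|^2(1+O(c^2))$, which for the actual values of $a,b$ coming from the potential (and for any admissible width of the slab) can exceed the positive terms; there is no residual $O(\sqrt E)$ smallness to exploit, since the term survives in the rescaled limit. This is precisely why the paper glues in \emph{two} stages: first from the bulk field to a radial field centered at the shifted point $(\delta_1,0,0,0)$ over a fixed-size annulus, with a cutoff depending on $(x_1-\delta_1)^2+x_2^2$ so that the cross term becomes $h'\cdot\bigl((x_1-\delta_1)y_1+x_2y_2\bigr)^2\ge 0$ plus an $\varepsilon$-small remainder; and then from the shifted radial field to the genuine one over the thin slab $\{\delta_E/2\le x_1\le\delta_E\}$, where the primitive difference is $g=-\delta_1y_1/2$ (no $x_2y_2$ term at all) and the cross term is $\tfrac{\delta_1}{2}y_1^2\tilde f_E'\ge 0$. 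To repair your argument you would need to reproduce this structure --- either decompose the interpolation so that each cross term is a square or manifestly nonnegative, or find another mechanism that kills the $x_2y_2$ contribution --- rather than appeal to rescaling.
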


\begin{proof} We need to interpolate a few Liouville vector fields to obtain the desired $X_E$.  
  We start with the Liouville vector field $$\mathcal{Z}_{0}:=y_{1}\partial_{y_{1}}+y_{2}\partial_{y_{2}},$$ which is transverse to $\dot{W}_{0} $ except at the points of $\dot W_0$ projecting to $\partial \Omega_0$. The set $\dot{W}_{0}\cap\{y=0\}\subset H^{-1}(0)$ is formed by finitely many embedded curves $\gamma_{1}, \dots, \gamma_{n}, n=2l,$ projecting to the corresponding embedded curves $\hat \gamma_1,\ldots, \hat \gamma_n\subset \partial\Omega_{0}\setminus  \{v_1,\ldots,v_l\}.$

Fix $\gamma=\gamma_{j}$ for some $j$, and consider the quaternion vector fields  $X_0,X_1,X_2,X_3$  defined on regular points of $H$, see \eqref{eq_quaternionic_mechanical}.     
 Since   $T {\gamma}=\R X_{2}|_\gamma$, the plane field $\mbox{span}\{X_{1}, X_{3}\}|_{\gamma}\subset T\dot W_0$  is transverse to   $\gamma$.
Take a parametrization $\gamma = \gamma(t), t\in I=[0,1]$, so that $\gamma(0) = p_i =0$ for some $i$, and consider  coordinates $(\tilde x, \tilde y, \tilde z, t)$ on a tubular neighborhood  $\mathcal{U} \equiv \tilde B:=  B_{r}(0)\times I$,   $r>0$ small,   of $\gamma$, given by $(\tilde{x}, \tilde{y}, \tilde{z}, t)\mapsto \exp_{\gamma(t)}(\tilde{x}X_{1}+\tilde{y}X_{3}+\tilde{z}X_{0})\in \R^4.$
Here, $B_{r}(0)\subset\R^{3}$ is the open ball of radius $r>0$ centered at $0$ and  $\exp$ is the exponential map induced by the Euclidean metric.
  Let $\beta=\beta(\tilde x ,\tilde  y,\tilde z,t)\in[0,1]$ be a smooth  function   so that $\beta =0$ near $\partial \tilde B$ and $\beta=1$ near $L:=0 \times (r,1-r)$.
 Let $f(\tilde{x}, \tilde{y}, \tilde{z}, t):=- \tilde y \beta(\tilde{x}, \tilde{y}, \tilde{z}, t)$ be defined on $\tilde B$.
 Then $X_f$ vanishes near $\partial \tilde B$ and $X_{f}=X_{0}$ is positively transverse to $\dot{W}_{0}$ near $L$. Hence  $$\mathcal{Z}_{\varepsilon}:=X_{\varepsilon f}+\mathcal{Z}_{0}$$
is transverse to  $\dot{W}_{0}$ near $L$ for every $\varepsilon>0$ sufficiently small. Repeating the same construction near each $\gamma_j$, we obtain a Liouville vector field still denoted by $\mathcal{Z}_\varepsilon$, which is positively transverse to $\dot W_0$ away from arbitrarily small neighborhoods of $p_i,i=1,\ldots,l$.  We may assume the existence of functions $f_i=f$ as above, defined on mutually disjoint open neighborhoods $\mathcal{U}_i=\mathcal{U}$ of $\gamma_i$ and they vanish near  $\partial \mathcal{U}_i.$ They give rise to a smooth function $f$ supported in the interior of $\cup_i \mathcal{U}_i.$

Let $p_c:=p_i$ for some $i$. We may assume that  $p_c=0\in \R^4$ and near $p_c$ the Hamiltonian writes as
$H(x,y)=y_{1}^{2}/2+y_{2}^{2}/2+a x_{1}^{2}/2+b x_{2}^{2}/2+R(x)
$, where $a<0,b>0$ and $R=O(|x|^3)$. We consider the subset of $\dot W_0$ near $0$ that is contained in $\{x_1>0\}$. Then there exists $C>0$ such that 
\begin{equation}\label{x1x2}
    x_1 \geq C|x_2|\geq 0
\end{equation}
for every $(x_1,x_2,y_1,y_2) \in \dot W_0\cap \{x_1 > 0\}\cap B_{\delta_1}(0)$, where $\delta_1$ is fixed sufficiently small.
Let 
$$
\mathcal{Z}:=\frac{1}{2}(x_1-\delta_1) \partial_{x_1} +\frac{1}{2} x_2 \partial_{x_2} + \frac{1}{2}y_1 \partial_{y_1} +\frac{1}{2} y_2 \partial_{y_2}
$$ be the radial Liouville vector field centered at $ (\delta_1,0,0,0).$
Then $\mathcal{Z}$ is positively transverse to $\dot{W}_0$ on $\{(x_1 - \delta_1)^2 + x_2^2 <(3\delta_1) ^2, x_1>0\}$, provided $\delta_1>0$ is small enough. Indeed, we find 
\[ 
  d H \cdot \mathcal{Z}|_{\dot W_0} = - \frac{1}{2}\delta_{1} a  x_1  -R + \frac{1}{2}( x_1 - \delta_1)\partial_{x_1}R  + \frac{1}{2}x_2 \partial_{x_2}R, 
\] 
which is positive in that region if $\delta_1>0$ is sufficiently small. We have used \eqref{x1x2}. Furthermore, this also shows that for every $E>0$ sufficiently small, we have
\begin{equation}\label{dHZ}
d H \cdot \mathcal{Z} =E- \frac{1}{2}\delta_1ax_1 -R   +\frac{1}{2}(x_1-\delta_1)\partial_{x_1}R+\frac{1}{2}x_2\partial_{x_2}R> 0
\end{equation} on $W_E\cap \{(x_1-\delta_1)^2 + x_2^2<(3 \delta_1) ^2,x_1 \geq 0\},$ 
if $\delta_1>0$ is fixed sufficiently small, see \eqref{x1x2}.
On the other hand, from the construction above, we may assume that $\mathcal{Z}_\epsilon$ is positively transverse to $\dot W_0$ on  $\{(2\delta_1)^2 \leq (x_1-\delta_1)^2 + x_2^2\leq (4\delta_1)^2,x_1> 0\}.$  This can be achieved by taking $r>0$ sufficiently small.

Now we construct an interpolation between $\mathcal{Z}$ and $\mathcal{Z}_\varepsilon$ on  $\{(2\delta_1)^2 \leq (x_1-\delta_1)^2 + x_2^2 \leq (3\delta_1)^2,x_1> 0\}$. Let
$
\ell(x_1,x_2,y_1,y_2)  := -(x_1-\delta_1) y_1/2 - x_2 y_2/2+\varepsilon f,
$
where $f$ is as above. 
Notice that the Hamiltonian vector field of $\ell$ satisfies 
$
X_{\ell} = \mathcal{Z}_0 + X_{\varepsilon f} - \mathcal{Z} = \mathcal{Z}_\varepsilon - \mathcal{Z}.
$ 
Let  $\tilde h:= h((x_{1}-\delta_1)^2+x_2^2)
$
be defined near $0\in \R^4$, where $h\colon\R \to [0,1]$ is  smooth and satisfies
$h(t)=0$ if $t\leq(2\delta_1)^2$, $h(t)=1$ if  $t\geq (3\delta_1)^2,$ and $h'(t)>0$ if $(2\delta_1)^2< t< (3\delta_1)^2$.
Let 
$$
\tilde{\mathcal{Z}}_\varepsilon:= \mathcal{Z}+ X_{\ell\tilde h}.
$$
Then $\tilde{\mathcal{Z}}_\varepsilon = \mathcal{Z}$ on $\{(x_1-\delta_1)^2 + x_2^2 \leq (2\delta_1)^2, x_1>0\}$ and $\tilde{\mathcal{Z}}_\varepsilon = \mathcal{Z}_\varepsilon$ on $\{(x_1-\delta_1)^2 + x_2^2 \geq (3\delta_1)^2, x_1 >0\}$. 

%\begin{figure}[t]
    %\includegraphics[width=0.5\textwidth]{fig_first_interpolation.pdf}
    %\caption{An interpolation between the Liouville vector fields $\mathcal{Z}^+$ and $\mathcal{Z}_\varepsilon$ {\color{red} indications on the $x_1$-axis have to be corrected}}
    %\label{fig:first_interpolation}
%\end{figure}

We claim that $\tilde {\mathcal{Z}}_\varepsilon$ is transverse to $\dot{W}_0$ near $0$ for every $\varepsilon>0$ sufficiently small.
It suffices to consider the set $A_0:=\{(2\delta_1)^2<(x_1-\delta_1)^2+x_2^2<(3\delta_1)^2, x_1>0\}\cap \dot W_0.$
Notice that
\begin{align*}
d H \cdot \tilde{\mathcal{Z}}_\varepsilon 
 = (1- \tilde h) d H \cdot \mathcal{Z} + \tilde h d H \cdot \mathcal{Z}_\varepsilon + \ell d H \cdot X_{\tilde h}.
\end{align*}
The set $\{(x_1-\delta_1)^2+x_2^2=(3\delta_1)^2,x_1>0\}\cap \{y=0\}\cap \dot W_0$ is formed by two points $p_1,p_2$ projecting to the boundary of the Hill 
region. Since $dH\cdot \tilde{ \mathcal{Z}}_\varepsilon|_{p_i} = dH \cdot \mathcal{Z}_\varepsilon|_{p_i}=\epsilon dH \cdot X_f,i=1,2,$ we find a neighborhood $\mathcal{U}\subset \dot W_0$ of $\{p_1,p_2\}$ and $\hat c>0$ such that $dH \cdot \tilde{\mathcal{Z}}_\varepsilon|_{\mathcal{U}} > \hat c \epsilon>0$ for every $\epsilon>0$ sufficiently small. Notice that $\hat c>0$ does not depend on $\epsilon.$
Since both $\mathcal{Z}$ and $\mathcal{Z}_\varepsilon$ are positively transverse to $A_0$, we find  $\sigma>0$, independent of $\epsilon,$ such that $(1- \tilde h) d H \cdot \mathcal{Z} + \tilde h d H \cdot \mathcal{Z}_\varepsilon>\sigma$ on $A_0\setminus \mathcal{U}$.    
For the last term of $dH\cdot \tilde{\mathcal{Z}}_\varepsilon$, we compute
$$
\begin{aligned}
\ell d H \cdot X_{\tilde h} & =h'\cdot [((x_1-\delta_1)y_{1}+x_2y_2)^2
-2 \varepsilon f \cdot ((x_1-\delta_1)y_1+x_2y_2).
%\\ & \quad   +((x_1-\delta_1) y_1 + x_2 y_2-2\varepsilon f) dR \cdot ((x_1-\delta_1)\partial_{y_1}+x_2 \partial_{y_2})].
\end{aligned}
$$
%Since $dR = \partial_{x_1}Rdx_1 + \partial_{x_2}R dx_2$, the last term on the right-hand side above vanishes. 
Recall that $f$ is bounded, and $\varepsilon>0$ can be taken arbitrarily small. Since $h'\geq 0$, we conclude that $\ell d H \cdot X_{\tilde h}|_{A_0 \setminus \mathcal{U}} >-\sigma$ for every $\varepsilon>0$ sufficiently small.
Hence $d H \cdot \tilde{\mathcal{Z}}_\varepsilon>0$ on 
$A_0$ for every $\varepsilon>0$ sufficiently small, from which the claim follows. This fact also implies that for fixed $\varepsilon>0$ sufficiently small, $\tilde{\mathcal{Z}}_\varepsilon$ is  positively transverse to $W_E\cap \{(x_1-\delta_1)^2+x_2^2< (3\delta_1)^2,x_1\geq 0\}$, for every $E>0$ sufficiently small, see \eqref{dHZ}.

The last step is to construct an interpolation between $\tilde{\mathcal{Z}}_\varepsilon$ and  
\begin{equation}\label{Liouville_vf}
Y:=\frac{1}{2} ( x_1 \partial_{x_1} + x_2 \partial _{x_2} +y_1 \partial_{y_1} +y_2 \partial_{y_2} )
\end{equation} 
near $p_c=0$. This interpolation depends on the energy $E>0$. Recall that $\tilde{\mathcal{Z}}_\varepsilon=\mathcal{Z}$ on $(x_1-\delta_1)^2 + x_2^2 \leq (2\delta_1)^2$. 
We fix $E>0$ sufficiently small and construct a Liouville vector field $\mathcal{Z}_E$ transverse to $W_E$, which coincides with $Y$ for $|x_1|$ sufficiently small. We restrict to the region $\{x_1^2+x_2^2<\delta_1^2\},$ where $\tilde{\mathcal{Z}}_\varepsilon$ coincides with $\mathcal{Z}.$

Let $0<\delta_E\ll\delta_1$ be a small number to be determined below,  and let $\tilde f_{E}=\tilde f_E(x_1)$ be a smooth function satisfying 
$\tilde f_{E}(x_1)=0$ if $x_1\leq \delta_{E}/2$, 
$\tilde f_{E}(x_1)=1$  if $x_1\geq \delta_{E}$, and 
$\tilde f_{E}'(x_1)>0$   if $\delta_{E}/2 < x_1 <  \delta_{E}.$
Let  $g(y_1):= -\delta_1y_1/2$ so that $X_{g} =\mathcal{Z} - Y=-\frac{\delta_1}{2}\partial_{x_1}$. Define  
$$
\mathcal{Z}_{E} := Y + X_{g \tilde{f}_{E}}
$$ 
that satisfies $\mathcal{Z}_E = Y$ on $0< x_1\leq \delta_E/2$ and $\mathcal{Z}_E = \mathcal{Z}$ on $\delta_E\leq x_1 .$ 

We claim that $\mathcal{Z}_E$ is transverse to $W_E$ on $0\leq x_1<\delta_E$ if $E>0$ is fixed sufficiently small.   
We compute 
$$
d H \cdot \mathcal{Z}_{E} = (1-\tilde{f}_{E})d H\cdot Y+\tilde{f}_{E}d  H \cdot \mathcal{Z} + g dH \cdot X_{\tilde{f}_{E}}.
$$
We see that $d H \cdot Y = \frac{1}{2}(y_1^2+y_2^2+ax_1^2+b x_2^2) +  \frac{1}{2}(x_1\partial_{x_1}R + x_2 \partial_{x_2}R)>0$ on $W_E\cap \{|x_1|\leq \delta_E\}$, where $\delta_E:=c\sqrt{E}$
for some $c>0$ sufficiently small independent of $E>0$. This follows from  the fact that $d H \cdot Y|_{W_E\cap\{x_1=0\}}=\frac{1}{2}(y_1^2+y_2^2+bx_2^2)+\frac{1}{2}x_2\partial_{x_2}R(0,x_2)>\frac{E}{2}>0$ for every $E>0$ fixed sufficiently small. The second and third terms are non-negative. This follows from \eqref{dHZ} and the inequality
$g d H \cdot  X_{\tilde{f}_{E}} = \delta_{1}y_{1}^{2}\tilde f_{E}'/2\geq 0.$
The claim is proved.  

The symmetry of  $Y$ with respect to the involution $x_1\mapsto -x_1$ implies that the same construction can be done on $\{x_1\leq 0\}$, and also near every $p_i$ to obtain the desired  Liouville vector field  $X_{E}$, transverse to $W_E$ for every $E>0$ sufficiently small. The Liouville vector field $X_0$ transverse to $\dot W_0$ is obtained in the above construction of $\mathcal{Z}_\varepsilon$ by taking $r\to 0$ near each saddle-center $p_i$ for suitable neighborhoods of the arcs $\gamma_i$.
\end{proof}

\subsection{Finite energy planes asymptotic to a Lyapunov orbit}\label{subsec:holocurves}

Let us consider as before that for suitable symplectic coordinates near any saddle-center the mechanical Hamiltonian admits the form $H =\frac{1}{2}(y_1^2+y_2^2+a x_1^2+bx_2^2)+R(x)$, where $a<0,b>0$ and $R=O(|x|^3)$. For $E>0$ small, let $A_E:= H^{-1}(E) \cap \{|x_1|<c\sqrt{E}\}$, where $c>0$ is given in Proposition \ref{prop:transverseLiouville}, and let 
$$
\lambda= \frac{1}{2}(x_1dy_1-y_1dx_1 + x_2dy_2-y_2dx_2).
$$
In this section, we construct an almost complex structure $J_E$ on $\R \times A_E$ adapted to $\lambda_E:=\lambda|_{A_E}$ and admitting a pair of finite energy planes asymptotic to the Lyapunov orbit $P_{2,E}$ through opposite directions.

Let $\hat H_E := H(\sqrt{E}  \hat x, \sqrt{E}  \hat y)/E=\frac{1}{2}(\hat y_1^2+\hat y_2^2+a\hat x_1^2+b\hat x_2^2)+R(\sqrt{E}\hat x,\sqrt{E}\hat y)/E$ and notice that $H^{-1}(E) \equiv \hat H_E^{-1}(1)$ under the re-scaling $(x,y)=\sqrt{E}(\hat x,\hat y)$. Since
$\hat H_E$ converges in $C^\infty_{\text{loc}}$ to $\hat H_0=\frac{1}{2}(\hat y_1^2+\hat y_2^2+a \hat x_1^2+b \hat x_2^2)$ as $E\to 0^+$, we see that the trajectories on $\hat H_E^{-1}(1)$ locally converges to the trajectories on $\hat H_0^{-1}(1)$ as $E \to 0^+$.  In coordinates $(\hat x,\hat y)$, we consider
\begin{equation}\label{1form}
\hat \lambda= \frac{1}{2}(\hat x_1d\hat y_1 - \hat y_1 d\hat x_1 + \hat x_2 d\hat y_2 -\hat y_2 d\hat x_2)
\end{equation}
which restricts to a contact form $\hat \lambda_E$ on $\hat A_E:= \hat H_E^{-1}(1) \cap \{ |\hat x_1| < c\}$,  for  $E \geq 0$ small.  Notice that $\hat \lambda$ differs from $\lambda$ by a constant factor depending on $E$.

Considering new symplectic coordinates  $(\hat x_1,b^{-1/4}\hat x_2,\hat y_1,b^{1/4}\hat y_2)$ and denoting them again by $(\hat x_1,\hat x_2,\hat y_1,\hat y_2)$, we may assume that 
$$
\hat H_0 = \frac{1}{2}(\hat y_1^2 + \sqrt{b}\hat y_2^2 +a\hat x_1^2+\sqrt{b}\hat x_2^2).
$$
The $1$-form $\hat \lambda$ has the same form \eqref{1form} as before.
The Reeb vector field on $\hat A_E$ is denoted by $\hat R_E$. The Reeb vector field on $\hat A_0$ is given by
$$
\hat R_0 = -(\hat y_1, \sqrt{b}\hat y_2,-a \hat x_1,-\sqrt{b} \hat x_2).
$$ 
Notice that $\hat \lambda_0(\hat R_0)=1$.
Our strategy is to define an almost complex structure $\hat J_0$ on $\R \times \hat A_0$ and find a pair of planes asymptotic to $\hat P_{2,0}\subset \hat H_0^{-1}(1)$ whose projections to $\hat A_0$ lie in $\{\hat x_1=0\}$. Then, for any family of almost complex structures $\hat J_E$ on $\R \times \hat A_E$, $E\geq 0$ small, that smoothly continues $\hat J_0$, we obtain a pair of corresponding $\hat J_E$-holomorphic planes by automatic transversality \cite{wendl2010automatic}. Such planes can now be re-scaled back to the original coordinates $(x,y)$, giving the desired pair of holomorphic planes asymptotic to $P_{2, E}\subset H^{-1}(E).$ 

We use the transverse frame $\{Y_1, Y_2\}=\{j_1 \nabla \hat H_0,j_2 \nabla \hat H_0\},$ where $j_1, j_2$ are as in \eqref{eq_quaternion}, adapted to $\hat A_0$ to define an almost complex structure $\hat J_0$ on $\R \times \hat A_0$. In that case, we have 
$$
Y_1=(\sqrt{b}\hat y_2,-\hat y_1,\sqrt{b} \hat x_2,-a \hat x_1) \quad \mbox{ and } \quad Y_2 = (\sqrt{b} \hat x_2,-a \hat x_1,-\sqrt{b} \hat y_2,\hat y_1).
$$
Since $\rm{span}\{Y_1,Y_2\}$ and $\ker \hat \lambda_0$ are transverse to the Reeb vector field $\hat R_0$, we can project $Y_1$ and $Y_2$ to $\ker \hat \lambda_0$ along $\hat R_0$ to obtain  $\hat Y_1 = Y_1 - \hat \lambda_0(Y_1)\hat R_0$ and $\hat Y_2 = Y_2 - \hat \lambda_0(Y_2)\hat R_0$, both contained in the contact structure $\ker \hat \lambda_0$. Then we take the compatible almost complex structure $\hat J_0$ on $\R \times \hat A_0$  determined by $\hat J_0 \cdot \partial_a = \hat R_0$ and
\begin{equation}\label{def:JandY}
\hat J_0 \cdot \hat Y_2 := \hat Y_1.
\end{equation}
Notice that $d \hat \lambda_0(\hat Y_2,\hat Y_1)>0$. 
We seek for a pair of $\hat J_0$-holomorphic planes, both asymptotic to 
$$
 \hat P_{2,0} = \{\hat x_1= \hat y_1=0,  \sqrt{b} \hat y_2^2+ \sqrt{b} \hat x_2^2=2\},
$$
which approach $\hat P_{2,0}$ through opposite directions, that is through different signs of $\hat y_1$. Taking the local behavior of the flow in $\hat A_0$ into account, we shall construct planes 
which project onto the hemispheres of the 2-sphere $\{\hat x_1=0\} \cap \hat H_0^{-1}(1)$ whose equator is $\hat P_{2,0}$. We consider  cylinders 
$$
\tilde u=(d,u)\colon \R \times \left( \R / 2\pi \Z \right) \to \R \times \hat A_0,
$$ 
of the form
\begin{equation}\label{anzats}
\begin{aligned}
    d=d(s) \quad \mbox{ and } \quad u(s,t) =\left(0,g(s)\cos t, f(s), g(s) \sin t\right),
\end{aligned}
\end{equation}
where $d(s), f(s)$ and $g(s)$ will be determined below. Notice that 
$$
f(s)^2+\sqrt{b}g(s)^2=2,  \forall s,
$$ 
since $u(s,t) \in \hat H_0^{-1}(1), \forall (s,t)$.  Also, $\tilde u$ projects to $\hat x_1=0$. Let us denote by $\pi\colon T\hat A_0 \to \ker \hat \lambda_0$ the projection along $\hat R_0$. The first condition we need to impose on $\tilde u$ to obtain a $\hat J_0$-holomorphic cylinder is the first equation in \eqref{eq_psedo_holom_2}.
Using the ansatz \eqref{anzats}, we compute
$$
\begin{aligned}
\pi u_s & = u_s - \hat \lambda_0(u_s) \hat R_0 = \left(0,g'(s)\cos t,f'(s),g'(s)\sin t\right)\\
\pi u_t & = u_t - \hat \lambda_0(u_t) \hat R_0 =\left(0,-g(s) \sin t,0,g(s) \cos t\right)\\ & \hspace{3cm} -\frac{g(s)^2}{2}\left(-f(s),-\sqrt{b}g(s) \sin t,0,\sqrt{b}g(s) \cos t\right). 
\end{aligned}
$$
We also compute along $u(s,t)$
$$
\begin{aligned}
    \hat Y_1 & =  ( \sqrt{b}g(s)\sin t,-f(s),\sqrt{b}g(s) \cos t,0)\\
    & -\frac{1}{2}(1-\sqrt{b})g(s)f(s)\sin t \left(-f(s),-\sqrt{b}g(s)\sin t,0,\sqrt{b}g(s) \cos t \right)\\
    \hat Y_2 &  = \left(\sqrt{b}g(s)\cos t,0,-\sqrt{b}g(s) \sin t,f(s) \right)\\
    & -\frac{1}{2}(1-\sqrt{b})g(s)f(s)\cos t\left(-f(s),-\sqrt{b}g(s)\sin t,0,\sqrt{b}g(s) \cos t \right). 
\end{aligned}
$$

It follows from the computation above that 
$$
\begin{aligned}
 \cos t \cdot \hat Y_1 - \sin t \cdot \hat Y_2 & = \frac{\sqrt{b}g(s)}{f'(s)}\pi u_s\\
 \sin t\cdot  \hat Y_1 + \cos t \cdot \hat Y_2 & = \frac{f(s)^2+bg(s)^2}{g(s)f(s)}\pi u_t.
\end{aligned}
$$
Imposing that $\hat J_0 \cdot \pi u_s = \pi u_t$, see \eqref{eq_psedo_holom_2}, together with $f(s)^2+\sqrt{b}g(s)^2=2$, and \eqref{def:JandY}, we end up with the following differential equation 
\begin{equation}\label{f(s)}
f' =  -\frac{(2-f^2)f}{f^2+\sqrt{b}(2-f^2)}.
\end{equation}
Given any initial condition  $f(0)\in (0, \sqrt{2} ),$  we integrate \eqref{f(s)} to determine $f(s)$ satisfying $f(s) \to 0^+$ as  $s\to +\infty$  and $f(s) \to \sqrt{2}^-$ as  $s\to -\infty$. The value of $g(s)= \sqrt{(2-f(s)^2) /\sqrt{b}}$ is then determined. Notice that $g(s)^2 \to 2/\sqrt{b}$ as $s\to +\infty$. We also define  
$$
d'(s) := \hat \lambda_0(u_t) = \frac{g(s)^2}{2}  \quad \Rightarrow  \quad d(s) = \int_0^s \frac{g(\tau)^2}{2} d\tau, 
$$
see    \eqref{eq_psedo_holom_2}.
Hence $s=+\infty$ is a positive puncture and $s=-\infty$ is a removable puncture. After removing the puncture at $-\infty$, we obtain a finite energy plane $\tilde u_1=(d_1,u_1)\colon\C \to \R \times \hat A_0$ asymptotic to $\hat P_{2,0}.$

Choosing an initial condition $f(0)\in (-\sqrt{2},0),$ we proceed in the same way to obtain a finite energy plane $\tilde u_2=(d_2,u_2)\colon\C \to \R \times \hat A_0$ asymptotic to $\hat P_{2,0}$ through the opposite direction.  

Such planes are automatically transverse, and their Fredholm index is $1$, see \cite{wendl2010automatic}. Hence if $\hat J_E$ is a smooth family of almost complex structures   on $\hat A_E$ extending $\hat J_0$ on $\hat A_0$, we obtain two families of $\hat J_E$-holomorphic planes $\hat {\tilde u}_{i,E}=(\hat d_{i,E},\hat u_{i,E}), i=1,2,$ asymptotic to the Lyapunov orbit $\hat P_{2,E} \subset \hat A_E$ through opposite directions, and whose projections to $\hat H_E^{-1}(1)$ lie in $\{|\hat x_1|<c\}$ for any $E>0$ sufficiently small.
We can bring such planes back to the original coordinates  $(x,y)= (\sqrt{E}\hat x,\sqrt{E}\hat y)$ in the following way: on the contact structure, we consider the almost complex structure $J_E$ which coincides with $\hat J_E$ under the re-scaling, and define $ u_{i, E}(s,t) = \sqrt{E}\hat u_{i, E}(s,t)$. Since $\lambda$ gets multiplied by $1/E$ in coordinates $(x,y)$, the new real-valued function $d_{i,E}$ is defined as $d_{i,E}(s):=E \hat d_{i,E}(s)$. The new planes $\tilde u_{i,E}=(d_{i,E},  u_{i,E})$ are finite energy $J_E$-holomorphic planes asymptotic to the Lyapunov orbit $P_{2,E}\subset H^{-1}({E})$ through opposite directions. Moreover, $u_{i, E}(\C)$ converges in the Hausdorff topology to the saddle-center at $0\in \R^4$ as $E \to 0^+$. This procedure can be done for every saddle-center. Using an auxiliary metric on $W_E$, the locally defined $J_E$ extends to an almost complex structure on $\R \times W_E.$ We have proved the following proposition.

\begin{proposition}\label{prop_rigid_planes}
    Under the conditions of Proposition \ref{prop:transverseLiouville}, the following holds. For every $E>0$ sufficiently small, there exists an almost complex structure $J_E$ on $\R \times W_E$ adapted to $\lambda_E$ so that the Lyapunov orbit near each $p_i$ is the asymptotic limit of a pair of $J_E$-holomorphic planes $\tilde u_{i,E}=(d_{i,E},u_{i,E}),i=1,2,$ through opposite directions, whose projections to $W_E$ are arbitrarily close to $p_i$ as $E\to 0$. 
\end{proposition}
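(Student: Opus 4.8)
The plan is to reduce the statement to a local model near each saddle-center and to build the holomorphic planes first for the decoupled quadratic Hamiltonian $\hat H_0$, then to transport them to the genuine energy surface by automatic transversality together with a rescaling argument. Fixing one saddle-center $p_i=0$, I would pass to the rescaled coordinates $(x,y)=\sqrt{E}(\hat x,\hat y)$, under which $H^{-1}(E)$ becomes $\hat H_E^{-1}(1)$ and $\hat H_E \to \hat H_0$ in $C^\infty_{\text{loc}}$ as $E\to 0^+$. After the normalizing substitution $(\hat x_1,b^{-1/4}\hat x_2,\hat y_1,b^{1/4}\hat y_2)$, the limit reads $\hat H_0=\tfrac12(\hat y_1^2+\sqrt{b}\hat y_2^2+a\hat x_1^2+\sqrt{b}\hat x_2^2)$, and the form $\hat\lambda$ of \eqref{1form} restricts to $\hat\lambda_E$ on $\hat A_E=\hat H_E^{-1}(1)\cap\{|\hat x_1|<c\}$. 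I would then take the quaternionic frame $\{Y_1,Y_2\}=\{j_1\nabla\hat H_0,\ j_2\nabla\hat H_0\}$, project it to $\ker\hat\lambda_0$ along $\hat R_0$, and define $\hat J_0$ by \eqref{def:JandY}, the compatibility being guaranteed by $d\hat\lambda_0(\hat Y_2,\hat Y_1)>0$.

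The analytic heart is to produce, for the model $\hat J_0$, a pair of finite-energy planes asymptotic to $\hat P_{2,0}=\{\hat x_1=\hat y_1=0,\ \sqrt{b}(\hat x_2^2+\hat y_2^2)=2\}$ through opposite signs of $\hat y_1$. Exploiting the $S^1$-symmetry of $\hat H_0$ in the $(\hat x_2,\hat y_2)$-plane, I would seek $\hat J_0$-holomorphic cylinders of the symmetric form \eqref{anzats}, forcing the image onto the invariant two-sphere $\{\hat x_1=0\}\cap\hat H_0^{-1}(1)$ whose equator is $\hat P_{2,0}$. Substituting the ansatz into the first line of \eqref{eq_psedo_holom_2} collapses the Cauchy--Riemann system to the scalar ODE \eqref{f(s)}, with $g$ and the $\R$-component $d$ then fixed by $f^2+\sqrt{b}g^2=2$ and $d'=g^2/2$. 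For any $f(0)\in(0,\sqrt{2})$, the trajectory of \eqref{f(s)} runs monotonically with $f(s)\to 0^+$ as $s\to+\infty$ and $f(s)\to\sqrt{2}^-$ as $s\to-\infty$; the first limit makes $s=+\infty$ a positive puncture asymptotic to $\hat P_{2,0}$, while the second makes $s=-\infty$ removable, so filling in that puncture yields a genuine plane $\tilde u_1$. Choosing $f(0)\in(-\sqrt{2},0)$ produces the companion plane $\tilde u_2$ approaching $\hat P_{2,0}$ from the opposite hemisphere.

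Persistence and globalization come next. Each model plane has Fredholm index $1$ and, being asymptotic to the nondegenerate hyperbolic orbit $\hat P_{2,0}$, satisfies the hypotheses of Wendl's automatic transversality \cite{wendl2010automatic}, hence is regular and survives deformation of the almost complex structure. For any smooth family $\hat J_E$ on $\R\times\hat A_E$ extending $\hat J_0$, the implicit function theorem then produces, for every small $E>0$, $\hat J_E$-holomorphic planes $\hat{\tilde u}_{i,E}$ asymptotic to $\hat P_{2,E}$ through opposite directions, with images confined to $\{|\hat x_1|<c\}$. Rescaling by $u_{i,E}=\sqrt{E}\,\hat u_{i,E}$ and $d_{i,E}=E\,\hat d_{i,E}$ (the latter compensating the factor $1/E$ absorbed by $\lambda$) gives the desired $J_E$-holomorphic planes asymptotic to $P_{2,E}\subset H^{-1}(E)$, and the factor $\sqrt{E}$ forces their projections to converge in the Hausdorff topology to $p_i$ as $E\to0^+$. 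Performing this near every saddle-center and extending the locally defined $J_E$ to a global adapted almost complex structure on $\R\times W_E$ via an auxiliary metric completes the construction.

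I expect the genuine obstacle to be twofold. First, one must verify the qualitative behavior of \eqref{f(s)}---monotonicity together with the precise endpoint asymptotics $0^+$ and $\sqrt{2}^-$---and confirm that the $-\infty$ puncture is removable rather than asymptotic to a second orbit, since this is exactly what upgrades the cylinder to a plane. Second, one must confirm that the planes lie in the scope of automatic transversality: pinning down the Fredholm index as $1$ and checking the index inequality relating the Conley--Zehnder index of $\hat P_{2,0}$ to the genus and puncture count, so that Wendl's criterion applies and the deformation from $\hat J_0$ to $\hat J_E$ carries the planes along. The remaining steps---the rescaling bookkeeping and the global extension---are routine once these two points are secured.
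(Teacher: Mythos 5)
Your proposal is correct and follows essentially the same route as the paper: rescaling to the model Hamiltonian $\hat H_0$, defining $\hat J_0$ via the projected quaternion frame, reducing the Cauchy--Riemann equations under the symmetric ansatz \eqref{anzats} to the scalar ODE \eqref{f(s)}, removing the puncture at $s=-\infty$, and invoking automatic transversality plus rescaling to transport the pair of planes to $H^{-1}(E)$. The two points you flag as remaining obstacles are handled in the paper exactly at the level of detail you indicate, so nothing essential is missing.
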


\subsection{Proof of Theorem \ref{main_theorem}}\label{subsec_proof}

We complete the proof of Theorem \ref{main_theorem} by applying the results of the previous sections. We can assume that the potential $V(x)=V_\epsilon(x)$ is as in Proposition \ref{prop_capping}, where $\epsilon>0$ is sufficiently small. Here, the potential $V$ is changed away from the disk-like compact set $K_0\subset \{V\leq 0\}$ given by the projection of the singular sphere-like subset $S_0\subset H^{-1}(0)$. The Hill region $\{V\leq 0\}$ becomes the union of $K_0$ and $l$ disjoint disk-like compact domains $K_i,i=1,\ldots,l,$ touching $K_0$ at the saddles $v_1,\ldots,v_l\in \partial K_0$. We know from Proposition \ref{prop_weakconv} that if $\epsilon>0$ is fixed sufficiently small, the following conditions hold. If $E>0$ is sufficiently small, then the sphere-like hypersurface $W_E= H^{-1}(E),$ whose projection to the $x$-plane contains $K_0\bigcup\cup_{i=1}^l K_i,$ is weakly convex, the only index-$2$ orbits are the Lyapunov orbits around $p_1,\ldots,p_l$ and every index-$3$ orbit is not linked with any Lyapunov orbit. Since $H$ is a mechanical system,  $W_E$ admits a contact form $\lambda_E$ whose Reeb flow parametrizes the Hamiltonian flow. Hence, we can apply the main result in \cite{Weakconvex}, see Theorem \ref{thm:weakconv}, to obtain a weakly convex foliation $\F_E$ whose binding orbits are the Lyapunov orbits and $l$ index-$3$ orbits in each domain bounded by the rigid planes. In particular, $W_E$ is the union of $l+1$ compact subsets bounded by the pairs of rigid planes.  We know that the foliation $\F_E$ given in Theorem \ref{thm:weakconv} is the projection to $W_E$ of a finite energy foliation in $\R \times W_E$ associated with an almost complex structure $J_E$. By Propositions \ref{prop:transverseLiouville} and \ref{prop_rigid_planes}, we can choose $\lambda_E$ and $J_E$ (perhaps after a $C^\infty$-small perturbation of $J_E$) so that the rigid planes asymptotic to the Lyapunov orbits project arbitrarily close to the saddles $v_1,\ldots,v_l$ as $E \to 0^+$, see Remark \ref{rem_fef}. In particular,  $W_E$ contains a compact subset $S_E$ bounded by the closure of all rigid planes so that its projection to the $x$-plane lies in the domain of the initial potential. In particular, $S_E$ lies inside the initial energy surface $H^{-1}(E)$, and thus $\F_E$ restricted to $S_E$ is the desired weakly convex foliation. This finishes the proof of Theorem \ref{main_theorem}. \qed

\section{Applications}\label{sec:examplesd}\label{sec_applications}

We apply Theorem \ref{main_theorem} to the H\'enon-Heiles potential for energies slightly above $1/6$. We also discuss the existence of weakly convex foliations for decoupled systems, including frozen Hill's lunar problem with centrifugal force, the Stark problem, the Euler problem of two centers, and a chemical reaction model. 

\subsection{The H\'enon-Heiles system}\label{sec_hh}
  In 1964, H\'enon and Heiles \cite{henon1964applicability} proposed the following potential to study the motion of a star in galaxy with an axis of symmetry
$$
V(x_1,x_2)=\frac{1}{2} (x_1^2+x_2^2)+x_1^2x_2-\frac{1}{3}x_2^3.
$$
The dynamics of $V$ is very rich, and we refer to the survey \cite{churchill1979survey}, and \cite{lunsford1972stability, de1998two, churchill1980pathology, ragazzo1994nonintegrability, bahni} for a discussion on periodic orbits, multiple horseshoes, non-integrability, etc. 

The potential $V$ is invariant under the rotation $(x_1,x_2) \mapsto e^{2\pi i/3}(x_1,x_2)$ and under the reflection $(x_1,x_2) \mapsto (-x_1,x_2)$, which means that $V$ exhibits a triangular symmetry ($\mathbb{Z}_3$-symmetry under $\frac{2\pi}{3}$-rotation) and also a reflection symmetry across the vertical axis $x_1=0$. If $0< E <1/6$, the energy surface $H^{-1}(E)$ contains a strictly convex sphere-like component $S_E$, see \cite{salomao2004convex,alexsandro_lens}, which admits a disk-like global surface of section. If $E=1/6$, the energy surface becomes singular and the Hill region $\Omega_{1/6}=\{V\leq 1/6\}$ contains a compact subset $K_{1/6}$ bounded by the triangle $T\subset V^{-1}(1/6)$ whose vertices are the saddle points $v_1=(0,1)$, $v_2=(-\sqrt{3}/2,-1/2)$ and $v_3=(\sqrt{3}/2,-1/2)$, see Figure \ref{fig:hh_proj}. The compact set $K_{1/6}$ is the projection of a singular sphere-like subset $S_{1/6}\subset H^{-1}(1/6)$ that contains three singularities $p_1,p_2$ and $p_3$ of saddle-center type, projecting to $v_1, v_2$ and $v_3$, respectively.

\begin{comment}
    \begin{figure}
    \includegraphics[width=0.5\textwidth]{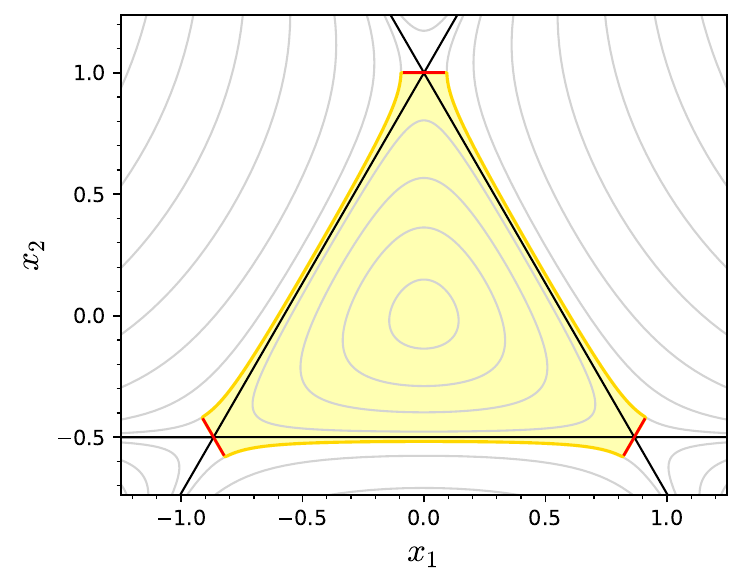}
    \caption{The H\'enon-Heiles system for energies $E$ slightly above $1/6.$  The energy surface $H^{-1}(E)$ contains a compact subset $S_E$ admitting a weakly convex foliation with $l=3$, see Figure \ref{fig:weak}. The yellowish region is the projection $K_E\subset \R^2$ of $S_E$ to the Hill region.}
    \label{fig:hh_proj}
\end{figure}
\end{comment}

\begin{figure}
\centering
\begin{subfigure}{.5\textwidth}
  \centering
  \includegraphics[width=1\linewidth]{hh_sem_cor_4.pdf}
%  \caption{A subfigure}
  \label{fig:sub1a}
\end{subfigure}%
\begin{subfigure}{.5\textwidth}
  \centering
  \includegraphics[width=0.9\linewidth]{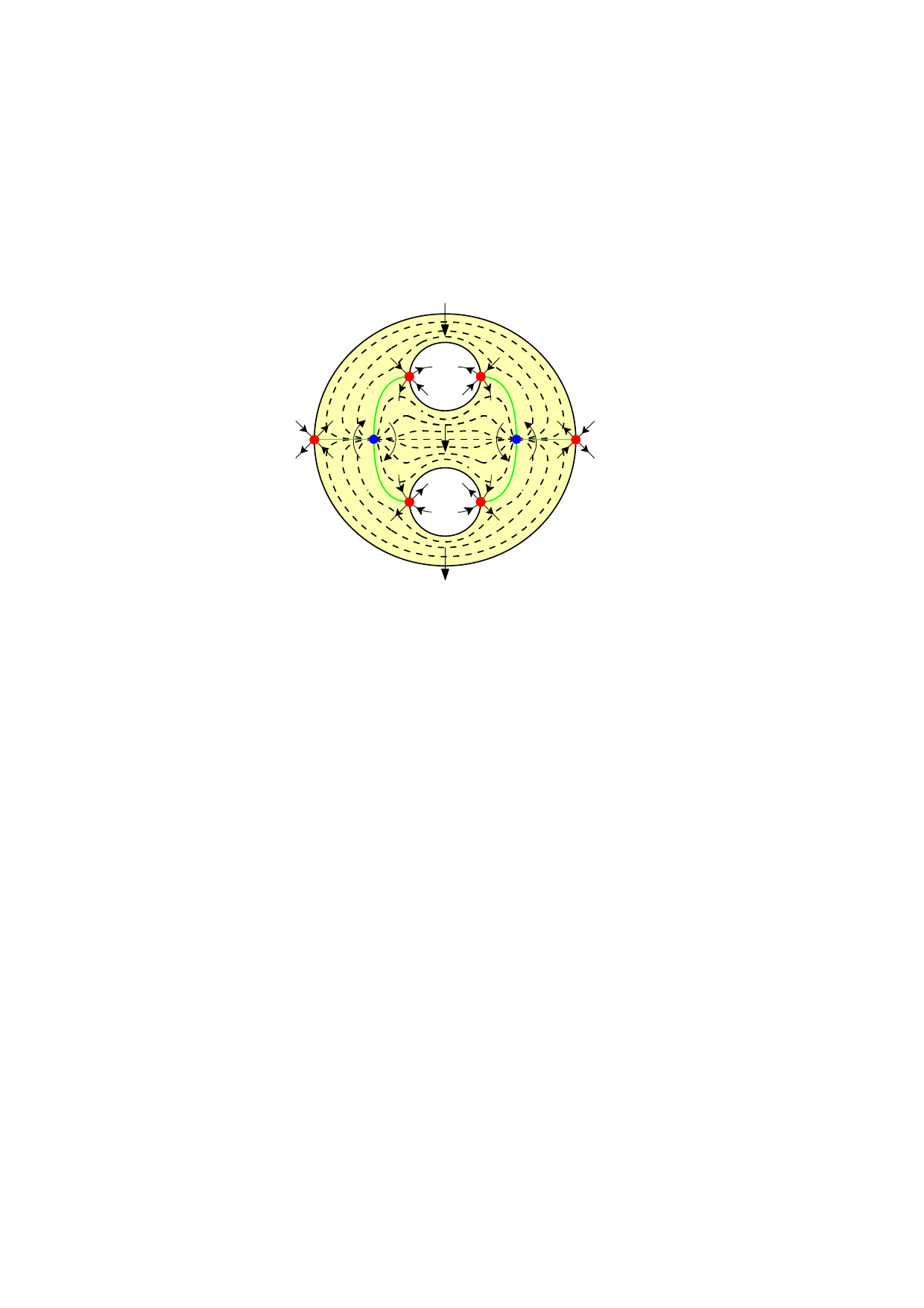}
%  \caption{A subfigure}
  \label{fig:sub2a}
\end{subfigure}
\caption{The H\'enon-Heiles system for energies $E>0$ slightly above the critical value $1/6.$ The yellowish region on the left is the projection $K_E\subset \R^2$ of the compact subset $S_E\subset H^{-1}(E)$ to the Hill region. On the right, a cross section of $S_E$ indicates the weakly convex foliation with $l=3$.}
\label{fig:hh_proj}
\end{figure}

\begin{proposition}\label{prop_hh2} The set $S_{1/6} \setminus \{p_1,p_2,p_3\}$ is dynamically convex and the quaternion frame is positive. 
\end{proposition}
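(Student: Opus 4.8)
The plan is to verify conditions H1 and H2 for $S_{1/6}$, treating the positive-frame condition (H2) as the computational core and deducing dynamical convexity (H1) afterwards. By \eqref{eq:varangle}, $\dot\theta$ is the quadratic form $S$ evaluated on the unit vector $(\cos\theta,\sin\theta)$, so the quaternion frame $\{X_1,X_2\}$ rotates strictly counterclockwise along every trajectory precisely when the symmetric matrix $S(w)$ of \eqref{eq:transflow} is positive-definite at every regular point $w\in S_{1/6}$. The structural feature that makes this tractable is that the cubic part $x_1^2x_2-\tfrac13x_2^3=\tfrac13\,\mathrm{Im}(x_1+ix_2)^3$ of the H\'enon-Heiles potential is harmonic, whence $V_{x_1x_1}+V_{x_2x_2}\equiv 2$. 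Substituting this into \eqref{eq:kappa} and using $g^{-2}=|\nabla V|^2+|y|^2$, the cross terms in $\kappa_{11}$ and $\kappa_{33}$ cancel and I obtain the identity $\kappa_{11}+\kappa_{33}=2g^2(|\nabla V|^2+|y|^2)=2$ on all of $S_{1/6}$. The $(1,1)$-entry of $S$ is thus always positive, and positivity of $S$ collapses to the single scalar inequality $\det S=2(\kappa_{22}+\kappa_{33})-\kappa_{12}^2>0$.

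It remains to prove $\det S>0$ on $S_{1/6}\setminus\{p_1,p_2,p_3\}$, and here I would use the symmetry group of $V$ (rotation by $2\pi/3$ and the reflection $x_1\mapsto-x_1$) to restrict to a fundamental sector of $\{\tfrac12|y|^2+V(x)=1/6\}$. On the zero-velocity curve $\{y=0\}$ the computation simplifies: by \eqref{kappaii} one has $\kappa_{12}=0$ and $\kappa_{33}=1$, so $\det S=2(\kappa_{22}+1)$ and it suffices to check $\kappa_{22}>-1$ along the open edges of the triangle $T$, which is exactly the implication \eqref{kappa22} and amounts to a one-variable estimate for $\kappa_{22}$ from \eqref{kappaii}. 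For $y\neq 0$ the inequality must be established with $x$ and $y$ both free subject to the energy constraint, and this is where I expect the main difficulty: the Hessian of $V$ has determinant $1-4|x|^2$, negative for $|x|>\tfrac12$ and in particular at the saddles $|x|=1$, so $S_{1/6}$ is genuinely non-convex and Lemma \ref{lema:convexpositive} cannot be applied. The estimate must therefore offset a possibly negative $\kappa_{22}+\kappa_{33}$ against $\kappa_{12}^2$; I would bound $\kappa_{12}^2$ by Cauchy--Schwarz and again invoke the identity $\kappa_{11}+\kappa_{33}=2$ to keep the diagonal terms under control.

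Finally, since the positive frame gives only $\dot\theta>0$ and hence $\mu\ge 1$, dynamical convexity (H1) needs the stronger statement that every periodic orbit winds more than one full turn, i.e. has transverse rotation number $\rho>1$. The route I favor is continuation from the subcritical levels: for $0<E<1/6$ the component $S_E\subset H^{-1}(E)$ is strictly convex \cite{salomao2004convex}, hence dynamically convex by \cite{hofer1998dynamics}, so all of its periodic orbits have $\rho>1$. A periodic orbit of $S_{1/6}\setminus\{p_i\}$ lies in the regular central region a definite distance from the saddle-centers, where (after a small perturbation if it is degenerate) it persists to nearby orbits of the strictly convex levels; continuity of $\rho$ then gives $\rho\ge 1$ in the limit. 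The delicate point, which I expect to be the main obstacle, is upgrading this to the strict inequality $\rho>1$: to do so I would combine it with the local analysis of Lemma \ref{lem_deltatheta} and Proposition \ref{prop_indicealto}, by which any orbit lingering near a saddle-center has arbitrarily large index, so that a hypothetical index-$2$ orbit would be trapped in a fixed compact region on which the limiting rotation numbers from the convex side are bounded below by a constant exceeding $1$.
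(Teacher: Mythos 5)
Your reduction of positivity of $S$ to $\det S>0$ via the identity $\kappa_{11}+\kappa_{33}=2$ (from harmonicity of the cubic part of $V$) is correct and a nice observation, but the proposal leaves precisely the hard half of H2 unproved: for $y\neq 0$ you only say you ``expect the main difficulty'' there and would ``bound $\kappa_{12}^2$ by Cauchy--Schwarz,'' without carrying out any estimate. Moreover, your reason for abandoning Lemma \ref{lema:convexpositive} is mistaken. The hypothesis of that lemma is positive-definiteness of the Hessian of $H$ restricted to $T S_{1/6}$, not positivity of $\det D^2V$. Points of $S_{1/6}$ with $y\neq 0$ project to the \emph{interior} of $K_{1/6}$, and there the tangential positive-definiteness is equivalent (by the criterion of \cite{salomao2004convex}) to
$G:=2(1/6-V)(V_{x_1x_1}V_{x_2x_2}-V_{x_1x_2}^2)+V_{x_1x_1}V_{x_2}^2+V_{x_2x_2}V_{x_1}^2-2V_{x_1}V_{x_2}V_{x_1x_2}>0$, which for H\'enon--Heiles collapses to $G=2(1/6-V)(1-x_1^2-x_2^2)>0$ since $K_{1/6}\setminus\partial K_{1/6}$ lies in the open unit disk. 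So Lemma \ref{lema:convexpositive} \emph{does} apply whenever $y\neq 0$; the only genuinely degenerate locus is the zero-velocity curve $\{y=0\}$, where \eqref{kappaii} gives $\kappa_{11}=\kappa_{33}=1$, $\kappa_{12}=0$, $\kappa_{22}=0$, hence $\dot\theta=1$. This is the computation your proposal is missing, and it is the entire content of the paper's proof of H2.

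For H1 your continuation argument from the subcritical convex levels does not close. First, a periodic orbit $P\subset S_{1/6}\setminus\{p_1,p_2,p_3\}$ need not persist to nearby energy levels (degenerate orbits need not continue, and you cannot perturb the Hamiltonian without changing the system), nor is every orbit on the critical level a limit of orbits on $S_E$, $E<1/6$. Second, even granting persistence, lower semicontinuity only yields $\rho\geq 1$, and your proposed fix via Lemma \ref{lem_deltatheta} and Proposition \ref{prop_indicealto} is a non sequitur: those results concern orbits passing close to a saddle-center, whereas the hypothetical index-$2$ orbit you are trying to exclude stays a definite distance away from all three, so they give no information. The claim that the rotation numbers on the convex side are bounded below by a constant $>1$ uniformly on a compact region as $E\to 1/6^-$ is exactly what needs proof and is not supplied. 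The paper avoids all of this by a direct argument: it modifies $H$ (not $E$) near the three saddles to produce a regular star-shaped hypersurface $\hat S_{1/6}$ containing a neighborhood of $P$, verifies via the computation of $G$ that the Hessian is positive-definite along $P$ except at the at most two zero-velocity points, and invokes the observation that the proof of \cite[Theorem~3.4]{hofer1998dynamics} only requires the Hessian to be $\geq 0$ along $P$ and positive somewhere to conclude $\mu(P)\geq 3$. As it stands, both halves of your proposal have genuine gaps.
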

\begin{proof}
Notice that $S_{1/6}$ is the Hausdorff limit of $S_E$ as $E\to 1/6^-$. Since $S_E$ is strictly convex for every $E<1/6$, see \cite{salomao2004convex,alexsandro_lens}, $S_{1/6}$ bounds a convex domain of $\R^4$. Let $P\subset S_{1/6}\setminus \{p_1,p_2,p_3\}$ be a periodic orbit. We shall construct a Hamiltonian $\hat H \colon \R^4 \to \R$ whose energy surface $\hat H^{-1}(1/6)$ is regular, star-shaped, and contains a neighborhood of $P$ in $S_E$. In particular, we formally use $\hat H$ to compute the index of $P$. We obtain $\hat H$ by modifying $H$ near $p_1,p_2$ and $p_3$. First, we modify it near $p_1=(0,1)$ and then use the $\Z_3$-symmetry to modify $H$ near $p_2$ and $p_3$.

Let $\delta>0$ be small, and let $f_2 \colon \R \to [0,+\infty)$ be a smooth nondecreasing function satisfying $f_2(t)=0, \forall t\leq 1-\delta,$ and $f(t)=\delta, \forall t\geq 1.$  Let
$$
\hat H (x_1, x_2, y_1, y_2): = H(x_1, x_2, y_1, y_2) + f(x_2) = \frac{|y|^2}{2} + V(x_1,x_2) +f(x_2).
$$
Observe that $\hat H$ coincides with $H$ in $\{x_2 <1-\delta\}$ and $\hat H$ has no critical point in $\{0<x_2<1\}$. Since $f\geq 0$ and $\hat H(x_1,1,0,0)=\frac{3}{2}x_1^2+\frac{1}{6}+\delta > 1/6, \forall x_1\in\R$, we see that $\hat H^{-1}(1/6)$ contains a singular sphere-like component $\hat S_{1/6}$ whose projection $\hat K_{1/6}$ to the $x$-plane lies in $K_{1/6}$ away from the vertex $(0,1)\in T$. Let $Z:=\frac{1}{2}(x_1,x_2,y_1,y_2)$ be the radial vector field. A direct computation gives
$$
\nabla \hat H \cdot Z = \frac{1}{6} + \frac{1}{2}x_1^2x_2 - \frac{1}{6}x_2^3+\frac{1}{2}f'_2(x_2)x_2>0 \quad \mbox{ on } \ \hat S_{1/6} \cap \{0\leq x_2 <1\}.
$$
We can restrict $\hat H$ to a small neighborhood of $\hat S_{1/6}$ and modify it near $p_2$ and $p_3$ in the same manner so that $\hat H$ becomes $\Z_3$-symmetric under $(x,y)\mapsto e^{2\pi i/3}(x,y)$, and $\hat S_{1/6}=\hat H^{-1}(1/6)$ is a regular star-shaped hypersurface. If $\delta>0$ is sufficiently small, then $\hat S_{1/6}$ contains a neighborhood of $P$ in $S_E$. 

Next, we show that the Hessian $\hat{\mathcal{H}}$ of $\hat H$ is positive-definite on $T\hat S_{1/6}$ along  $P$, except perhaps at precisely two points projecting to $\partial K_{1/6}$ in the case $P$ is a brake orbit. Notice that $f=0$ near $P$. Taking $\delta>0$ sufficiently small, it is enough to show that the Hessian $\mathcal{H}$ of $H$ is positive-definite on $TS_{1/6}$ near $P$ in the region projecting to the interior of $K_{1/6}$. From \cite{salomao2004convex}, this condition is equivalent to
$$
G:=2(1/6-V)( V_{x_1x_1}V_{x_2x_2}-V_{x_1x_2}^2) +V_{x_1x_1}V_{x_2}^2+V_{x_2x_2}V_{x_1}^2-2V_{x_1}V_{x_2}V_{x_1x_2}>0
$$
on $K_{1/6}\setminus \partial K_{1/6}$. A direct computation gives
$$
G  = 2(1/6-V)(1-x_1^2-x_2^2).
$$
Since $K_{1/6}\setminus \partial K_{1/6} \subset \{x_1^2+x_2^2<1\},$ we conclude that $G>0$ on $K_{1/6}\setminus \partial K_{1/6}$, and thus $\mathcal{H}$ is positive-definite on $TS_{1/6}$ near $P$ in the region projecting to $K_{1/6} \setminus \partial K_{1/6}.$ If $P$ is a brake orbit, then this condition holds along $P$ except at two points projecting to $\partial K_{1/6}$. 

Summarizing the construction above, we find a Hamiltonian $\hat H$ whose energy surface $\hat H^{-1}(1/6)$ is a star-shaped hypersurface $\hat S_{1/6}$ containing a neighborhood of $P$ in $S_{1/6}$. Moreover, the Hessian $\hat{\mathcal{H}}$ of $\hat H$ is positive-definite along $P$ except possibly at two points projecting to $\partial K_{1/6}$. The Liouville form $\lambda_0$ restricts to a contact form on $\hat S_{1/6},$ whose Reeb flow parametrizes the Hamiltonian flow on $\hat S_E$, and the (generalized) Conley-Zehnder index of $P$ is $\mu(P)$.

Theorem 3.4 in \cite{hofer1998dynamics} states that if $S=H^{-1}(1)\subset \R^4$ for a Hamiltonian $H \colon \R^4 \to \R$ is a strictly convex hypersurface and $P'\subset S$ is a periodic orbit of the Reeb flow of $\lambda_0|_S$, then $\mu(P')\geq 3.$ Investigating its proof, however, we see that it is enough that $S$ is star-shaped, the Hessian of $H$ is $\geq 0$ along $P'$ and positive at some point of $P'$. Such conditions hold for $P\subset \hat S_{1/6}=\hat H^{-1}(1/6)$,  and we conclude that $\mu(P)\geq 3$. 

Finally, we show that the quaternion frame is positive on $S_{1/6} \setminus \{p_1,p_2,p_3\}$. Recall  from \eqref{eq:varangle} that the transverse linearized flow is determined by \eqref{eq:transflow}, and the argument $\theta(t)$ of a solution $\alpha = \alpha_1(t) + i\alpha_2(t)$ satisfies
$$
\dot \theta = (\kappa_{11}+\kappa_{33})\cos^2\theta + 2\kappa_{12} \cos \theta \sin \theta + (\kappa_{22}+\kappa_{33})\sin^2\theta.
$$
We know that for every point of $S_{1/6}\setminus \{p_1,p_2,p_3\}$ projecting to $K\setminus \partial K,$ $\mathcal{H}$ is positive-definite on $TS_{1/6}$. This implies $\dot \theta>0$, see Lemma \ref{lema:convexpositive}. For those points projecting to $\partial K \setminus \{v_1,v_2,v_3\}$ we know from \eqref{kappa22}  that $\kappa_{11} = \kappa_{33}=1,$ $\kappa_{12}=0$, and  
$$
\kappa_{22} = \frac{V_{x_1x_1}V_{x_2}^2-2V_{x_1x_2}V_{x_1}V_{x_2} + V_{x_2x_2}V_{x_1}^2}{V_{x_1}^2+V_{x_2}^2}=0,
$$ 
which implies that $\dot \theta = 1.$ we conclude that the quaternion frame is everywhere positive on $S_{1/6} \setminus \{p_1,p_2,p_3\}$.
\end{proof}

Proposition \ref{prop_hh2} and Theorem \ref{main_theorem} imply the following theorem.

\begin{theorem}\label{thm_HH}
For every $E$ slightly above $1/6,$  there exists a compact subset $S_E \subset H^{-1}(E)$ admitting a weakly convex foliation as in Theorem \ref{main_theorem} with $l=3$, see Figure \ref{fig:hh_proj}. The binding orbits are the three Lyapunov orbits near the saddle-centers and an index-$3$ orbit projecting near $K_{1/6}$. In particular,  $S_E$ contains infinitely many periodic orbits, and each Lyapunov orbit admits infinitely many homoclinic orbits or heteroclinic orbits to other Lyapunov orbits. 
\end{theorem}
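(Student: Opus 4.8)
The plan is to verify that the H\'enon-Heiles system meets every hypothesis of Theorem~\ref{main_theorem} and then to invoke that theorem directly; the only genuinely new input beyond Proposition~\ref{prop_hh2} is a symmetry argument that upgrades the conclusion to the multiplicity statement. After replacing $H$ by $H-1/6$ (so that the critical value becomes $0$, matching the normalization of Theorem~\ref{main_theorem}) and $E$ by $E-1/6$, I would first confirm the geometric setup. The potential $V$ is a polynomial, hence real-analytic, and solving $\nabla V=0$ shows that $V$ has exactly three saddle points $v_1=(0,1)$, $v_2=(-\sqrt{3}/2,-1/2)$, $v_3=(\sqrt{3}/2,-1/2)$, all lying on $\{V=1/6\}$ and forming the vertices of the triangle $T=\partial K_{1/6}$. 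Using the identity $G=2(1/6-V)(1-x_1^2-x_2^2)$ from the proof of Proposition~\ref{prop_hh2}, together with the inclusion $K_{1/6}\setminus \partial K_{1/6}\subset \{x_1^2+x_2^2<1\}$, one sees that the remaining points of $T$ are regular and that $V<1/6$ on the interior of $K_{1/6}$. Thus $K_{1/6}$ is exactly the disk-like region $K_0$ of the main theorem, projecting the singular sphere-like subset $S_{1/6}\subset H^{-1}(1/6)$ with precisely $l=3$ saddle-center singularities $p_1,p_2,p_3$.

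Second, conditions H1 and H2 are precisely what Proposition~\ref{prop_hh2} establishes: $S_{1/6}\setminus\{p_1,p_2,p_3\}$ is dynamically convex and carries a positive quaternion frame. Hence Theorem~\ref{main_theorem} applies verbatim, producing, for every $E$ slightly above $1/6$, a compact subset $S_E\subset H^{-1}(E)$ carrying a weakly convex foliation $\F_E$ as in Definition~\ref{def:weakly_convex_foliation}, whose binding orbits are the three Lyapunov orbits $P_{2,E}^1,P_{2,E}^2,P_{2,E}^3$ near the saddle-centers and a single unknotted index-$3$ orbit $P_{3,E}$, and whose projection $K_E$ converges in the Hausdorff topology to $K_{1/6}$ as $E\to 1/6^+$.

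Finally, to obtain the multiplicity and connecting-orbit conclusions I would verify the hypothesis of the last sentence of Theorem~\ref{main_theorem}, namely that the three Lyapunov orbits have equal action. This is immediate from the $\Z_3$-symmetry $(x,y)\mapsto e^{2\pi i/3}(x,y)$ of the H\'enon-Heiles Hamiltonian: it permutes the saddle-centers $p_1,p_2,p_3$ cyclically and therefore carries each Lyapunov orbit $P_{2,E}^j$ to the next, so their actions coincide. Theorem~\ref{main_theorem} then yields infinitely many periodic orbits in $S_E$ and infinitely many homoclinics or heteroclinics to each Lyapunov orbit, completing the proof.

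I expect no serious obstacle here: the hard analytic work---dynamical convexity and positivity of the transverse frame for a level that is only the Hausdorff limit of strictly convex levels, and is not itself strictly convex---is already carried out in Proposition~\ref{prop_hh2}. The one point requiring care is to make the symmetry step fully rigorous, \emph{i.e.}~to confirm that the $\Z_3$-action genuinely permutes the set of Lyapunov orbits and preserves their actions, so that the equal-action hypothesis of Theorem~\ref{main_theorem} is legitimately satisfied and the multiplicity conclusion is justified.
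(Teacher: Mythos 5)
Your proposal is correct and follows essentially the same route as the paper, which simply combines Proposition \ref{prop_hh2} with Theorem \ref{main_theorem}. Your explicit verification of the equal-action hypothesis via the $\Z_3$-symmetry $(x,y)\mapsto e^{2\pi i/3}(x,y)$ is a worthwhile detail that the paper leaves implicit, and it is exactly the right justification for the multiplicity conclusion.
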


\subsection{Decoupled mechanical systems} \label{subsec:decoupled}

We present examples of decoupled Hamiltonian systems that admit weakly convex transverse foliations.  Although Theorem \ref{main_theorem} applies to such systems, the foliations in some cases are obtained from gradient flow lines. We discuss the frozen Hill's lunar problem with centrifugal force, the Stark problem, the Euler problem of two centers, and a chemical reaction model.

Let  $H=H_1(x_1,y_1)+H_2(x_2,y_2)$ be the sum of two one-degree-of-freedom mechanical Hamiltonians 
\begin{equation}\label{H12}
H_1(x_1,y_1)=\frac{y_1^2}{2} + V_1(x_1) \quad \mbox{ and } \quad H_2(x_2,y_2)= \frac{y_2^2}{2} + V_2(x_2),
\end{equation}
where $V_1, V_2$ are smooth one-dimensional potentials with nondegenerate critical points. The trajectories of $H^{-1}(E)$ project to trajectories of $H_1^{-1}(E_1)$ and $H_2^{-1}(E_2)$, where $E=E_1+E_2$. The following proposition implies that $H$ is weakly convex, i.e., all of its non-trivial periodic orbits have index $\geq 2$.

\begin{proposition}\label{prop_decoupled1} Let $H$ satisfy the conditions above, and let $\gamma(t)=(\gamma_1(t),\gamma_2(t)) \in \R^4$ be a simple non-trivial periodic orbit. Then the following statements hold:
   \begin{itemize}
   \item[(i)] If $\gamma_1(t)\equiv(\bar x_1,0)$ is an equilibrium of $H_1$ corresponding to a nondegenerate minimum of $V_1$, and $\gamma_2(t)$ is a non-trivial periodic orbit of $H_2$, then $\mu(\gamma) \geq 3.$ In particular, its rotation number is $>1$.
   \item[(ii)] If $\gamma_1(t)\equiv(\bar x_1,0)$ is an equilibrium point of $H_1$ corresponding to a nondegenerate maximum of $V_1$, and $\gamma_2(t)$ is a non-trivial periodic orbit of $H_2$, then $\gamma$ is a hyperbolic orbit and $\mu(\gamma) = 2.$ In particular, its rotation number is $1$.
   \item[(iii)] If $\gamma_1$ and $\gamma_2$ are non-trivial periodic orbits of $H_1$ and $H_2$, respectively, then $\rho(\gamma)$ is an integer $\geq 2$. In particular, $\mu(\gamma) \geq 3$.   
   \end{itemize} 
The same conclusions hold in (i) and (ii) for $H_1$ interchanged with $H_2$. 
\end{proposition}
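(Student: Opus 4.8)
The plan is to read off the index and rotation number directly from the transverse linearized flow in the quaternion frame, exploiting the fact that for a decoupled Hamiltonian the linearized flow along $\gamma$ splits as a block sum $\Phi_1(t)\oplus\Phi_2(t)$, where $\Phi_i$ is the linearized flow of $H_i$. Throughout I would write the transverse solution in the frame $\{X_1,X_2\}$, let $\theta(t)$ be its continuous argument, and recall that $\mu(\gamma)$ and $\rho(\gamma)$ are determined by the winding $\theta(T)-\theta(0)$ over the period $T$ and its iterates, through the definitions in Section \ref{def:CZ} and the interval \eqref{eq:vaiinterval}. The key device is the splitting $\theta=\psi-\arg v$, where $v(t)$ is the linearized solution expressed in fixed coordinates of $\mathbb{R}^4$ and $\psi(t)=\arg X_1(t)$ is the angle of the quaternion frame; this separates the total winding into the rotation of the frame (driven by $\nabla H$ along $\gamma$) and the intrinsic rotation of the linearized flow.

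For (i) and (ii) the first step is to note that, since $\gamma_1$ is an equilibrium, $\nabla H_1$ vanishes along $\gamma$, so both $\nabla H$ and the Hamiltonian vector field lie in the $(x_2,y_2)$-plane; substituting $y_1=0=V_{x_1}$ into \eqref{eq_quaternionic_mechanical} then shows that $X_1,X_2$ lie in the $(x_1,y_1)$-plane, which is therefore exactly the transverse symplectic plane, invariant under $\Phi_1\oplus\Phi_2$ with restricted flow $\Phi_1$. Next I would compute the frame rotation: identifying $(x_1,y_1)\cong\mathbb{C}$ gives $X_1\cong g\,(y_2+iV_2'(x_2))$ and $X_2=-iX_1$, so $\psi$ is the argument of $(y_2,V_2')$ along $\gamma_2$. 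A Poincar\'e--Hopf count---the map $(x_2,y_2)\mapsto(y_2,V_2')$ has local degree $-1$ at the enclosed center, around which $\gamma_2$ winds once---shows the frame makes exactly one positive turn per period, i.e. $\psi(T)-\psi(0)=2\pi$. Finally I would add the intrinsic term $-(\arg v(T)-\arg v(0))$: at a nondegenerate minimum $\Phi_1$ is elliptic with $\tfrac{d}{dt}\arg v<0$ strictly, so $\theta(T)-\theta(0)>2\pi$ for every initial condition, forcing $\mu(\gamma)\ge 3$ and $\rho(\gamma)>1$; at a nondegenerate maximum $\Phi_1$ is a positive-hyperbolic saddle with fixed eigendirections, so $\arg v$ varies within a single sector and the normalized winding interval contains $1$ in its interior, giving $\mu(\gamma)=2$, $\rho(\gamma)=1$, with hyperbolicity from the real eigenvalues $e^{\pm\alpha T}$ of the transverse monodromy.

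For (iii) both $\gamma_1,\gamma_2$ are nontrivial periodic orbits, the transverse plane genuinely mixes the two factors, and the clean splitting is unavailable. The strategy has two parts. First, integrality of $\rho$: writing the common period as $T=n_1T_1=n_2T_2$, each factor contributes a parabolic (shear) monodromy $\Phi_i(T)=\Phi_i(T_i)^{n_i}$, and reducing $\Phi_1(T)\oplus\Phi_2(T)$ modulo the flow direction $X_{H_1}+X_{H_2}$ and the energy constraint shows the transverse monodromy is again parabolic; since a parabolic symplectic path has integer rotation number, $\rho(\gamma)\in\mathbb{Z}$. Second, the lower bound: I would argue that each periodic factor, traversed $n_i\ge1$ times, forces at least $n_i$ full turns of the transverse flow, so $\rho(\gamma)\ge n_1+n_2\ge 2$ and hence $\mu(\gamma)\ge 3$. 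The statement about interchanging $H_1$ and $H_2$ is then immediate by symmetry.

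The main obstacle is the lower bound in (iii): in the two-dimensional transverse flow the quaternion frame no longer confines to one coordinate plane, so the per-factor winding count used in (i)--(ii) does not transcribe directly. I expect to handle it either by a continuity argument that deforms the two partial energies toward a regime in which one factor degenerates to an equilibrium---reducing to the elliptic case already settled, while tracking $\rho$, which is locally constant on the parabolic stratum---or by averaging $\dot\theta$ from \eqref{eq:varangle} with the decoupled coefficients $\kappa_{ij}$ (for which $V_{x_1x_2}=0$) and showing the mean winding rate integrates to $2\pi(n_1+n_2)/T$. Establishing this additivity rigorously, rather than the integrality or the one-equilibrium cases, is where the real work lies.
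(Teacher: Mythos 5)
Parts (i) and (ii) of your proposal are correct and coincide with the paper's own argument: both identify $\Pi_1=\mathrm{span}\{\partial_{x_1},\partial_{y_1}\}|_\gamma$ as the invariant transverse plane, observe from \eqref{eq_quaternionic_mechanical} that $X_1,X_2$ lie in $\Pi_1$ with reversed orientation and wind once positively per period (the paper phrases your degree count as: $\gamma_2'=(y_2,-V_2')$ winds $+1$ clockwise, hence $X_1,X_2$ wind $+1$ counterclockwise), and then add the intrinsic rotation of $\dot z=\left(\begin{smallmatrix}0&1\\-V_1''(\bar x_1)&0\end{smallmatrix}\right)z$, which is strictly negative in the fixed frame when $V_1''>0$ and sector-confined with variations of both signs when $V_1''<0$. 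Your bookkeeping $\theta=\psi-\arg v$ is consistent with the orientation reversal, and the hyperbolicity claim in (ii) is correct.

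The genuine gap is in (iii), and it is exactly where you say it is: you assert $\rho(\gamma)\geq n_1+n_2$ but defer its proof to either a deformation argument or an averaging of $\dot\theta$, neither of which you carry out; the deformation route in particular is delicate because $\rho$ need not be continuous as one factor degenerates to an equilibrium (the orbit's minimal period and the decomposition $T=n_1T_1=n_2T_2$ change in the limit). The paper closes (iii) in one step, with equality rather than an inequality: for a decoupled Hamiltonian the transverse rotation number splits as $\rho(\gamma)=\rho(\gamma_1)+\rho(\gamma_2)$, where $\rho(\gamma_i)$ is the rotation number of the linearized flow of $H_i$ in its own symplectic factor over the common period $T$. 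Each factor carries the nonvanishing invariant section $\gamma_i'(t)$, which winds exactly $n_i\geq 1$ times over $[0,T]$; an $\mathrm{Sp}(2)$-path with an invariant winding line field has rotation number equal to that integer winding, so $\rho(\gamma_i)=n_i\in\Z_{\geq 1}$ and $\rho(\gamma)=n_1+n_2\geq 2$, whence $\mu(\gamma)\geq 3$. This is the additivity of the mean index under direct sums of symplectic paths (the two invariant directions $\gamma_1'\oplus 0$ and $0\oplus\gamma_2'$ account for the passage from the full linearized flow to the transverse one), and it replaces both your parabolicity argument for integrality and your unproven lower bound. With this lemma supplied, your outline becomes a complete proof along the paper's lines.
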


\begin{remark}
    Proposition \ref{prop_decoupled1} easily generalizes for potentials $V_1$ and $V_2$ with degenerate critical points. In particular, the dynamics of any decoupled Hamiltonian  $H=(y_1^2+y_2^2)/2+V_1(x_1)+V_2(x_2)$ restricted to a regular energy surface is weakly convex.
\end{remark}

\begin{proof}[Proof of proposition \ref{prop_decoupled1}]
The conclusions can be obtained by summing up the rotation numbers on the symplectic planes $x_1y_1$ and $x_2y_2$. For clarity, however, we stick to the geometric definition of $\mu(\gamma)$ and compute it using the quaternion frame $\{X_1,X_2\}$. 

First, consider cases (i) and (ii). We have
$X_1|_\gamma = g\cdot (y_2\partial_{x_1} + V_2'(x_2) \partial_{y_1})$ and $X_2|_\gamma = g\cdot (V_2'(x_2)\partial_{x_1} - y_2\partial_{y_1})$, where $g=(y_2^2+V_2'(x_2)^2)^{-1/2}.$ Notice that the transverse plane $\Pi_1:=\text{span}\{\partial_{x_1},\partial_{y_1}\}|_\gamma$ is invariant by the flow. Also, the frame $\{X_1,X_2\}|_\gamma$ has opposite orientation with respect to the frame $\{\partial_{x_1},\partial_{y_1}\}|_\gamma$. 

Since $\gamma$ is simple, $\gamma_2$ is also simple. The winding number of $\gamma_2'=(y_2,-V_2')\in \R^2$ is $+1$ in the clockwise direction. Hence, the winding number of $X_1$ and $X_2$ on $\Pi_1$ is $+1$ in the counterclockwise direction. Since the linearized flow on $\Pi_1$ is given by  
$$
\dot z = \left(\begin{array}{cc}0 & 1 \\ -V_1''(\bar x_1) & 0 \end{array} \right) z, \quad z=(\alpha_1 \ \alpha_2)^T\equiv \alpha_1 \partial_{x_1} +\alpha_2\partial_{y_1},
$$ we see that if $V''_1(\bar x_1)>0,
$  then the variation in the argument of every linearized solution on the frame  $\{\partial_{x_1},\partial_{y_1}\}|_\gamma$ is $<0$ and thus is greater than  $2\pi$ on $\{X_1, X_2\}|_\gamma$. Hence, the index of $\gamma$ is $\geq 3$. If $V''_1(\bar x_1)<0$, then the variations in the argument of all non-trivial solutions on $\{\partial_{x_1},\partial_{y_1}\}|_\gamma$ give an interval containing $0$ in its interior, and thus, an interval containing $2\pi$ in its interior on the frame $\{X_1, X_2\}|\gamma$. Hence, $\gamma$ is an index-$2$ hyperbolic orbit. %In the above reasoning, the interval of argument variations always contains a point $>2\pi$ even if $\bar x_1$ is a degenerate critical point of $V_1$. This implies $\mu\geq 2$ in the general case.

In case (iii), we use a more direct proof. Since $H$ is decoupled, we have $\rho(\gamma) = \rho(\gamma_1) + \rho(\gamma_2),$ where $\rho(\gamma_i)$ is the rotation number of $\gamma_i$ in each symplectic factor. Since  $\gamma_1$ and $\gamma_2$ are non-trivial periodic orbits, we have $\rho(\gamma_i)\geq 1, i=1,2.$ Their velocity vectors are invariant under the flow in each factor and thus $\rho(\gamma_i)$ is an integer $\geq 1$. Hence $\rho(\gamma)$ is an integer $\geq 2$. In particular, $\mu(\gamma) \geq 3$. 
\end{proof}

Proposition \ref{prop_decoupled1} tells us that an index-$2$ orbit of $H$ has the form $\{(\bar x_1 ,0)\} \times \gamma_2$ or $\gamma_1 \times \{(\bar x_2 ,0)\}$, where $\bar x_i$ is a maximum of $V_i$ and $\gamma_j$ is a non-trivial periodic orbit of $H_j,j\neq i.$ 

Notice that every critical point of 
$$
V(x_1,x_2):=V_1(x_1)+V_2(x_2)
$$ 
has the form $(\bar x_1,\bar x_2)$, where $\bar x_i$ is a critical point of $V_i$. 
Assume that $0$ is a critical value of $H$ satisfying the conditions of Theorem \ref{main_theorem}, i.e., the Hill region $\{V\leq 0\}$ has a compact disk-like  subset $K_0\subset \R^2$ whose boundary $\partial K_0\subset V^{-1}(0)$ contains precisely $l\geq 1$ critical points $v_1,\ldots,v_l$ of $V$, which are all saddles of $V$, and $V|_{K_0\setminus \partial K_0}<0.$ Let $S_0\subset H^{-1}(0)$ be the sphere-like singular subset projecting to $K_0$, and let $p_i=(v_i,0)\in \R^4$ the corresponding saddle-center of $H$. 

Suppose that the saddle $v_i=(\bar x_{i,1},\bar x_{i,2})$ is such that $\bar x_{i,1}$ is a maximum of $V_1$ and $\bar x_{i,2}$ is a minimum of $V_2$. Then 
$p_i=(v_i,0)\in \R^4$ is a saddle-center and the Lyapunov orbit near $p_i$, for $E>0$ sufficiently small, has the form $\{(\bar x_{i,1},0)\}\times \gamma_{i,2,E}$, where $\gamma_{i,2,E}(t)$ is a non-trivial periodic orbit of $H_2$ near $(\bar x_{i,2},0)\in \R^2$. In particular, its projection to the $x$-plane is vertical. A similar statement holds if $\bar x_{i,1}$ is a minimum of $V_1$ and $\bar x_{i,2}$ is a maximum of $V_2$, and we obtain a Lyapunov orbit $\gamma_{i,1,E}\times \{(\bar x_{i,2},0)\}$ whose projection to the $x$-plane is horizontal.

For every $E>0$ small, we denote by $K_E$ the disk-like compact subset of the Hill region $\{V\leq E\}$ bounded by simple arcs of $V^{-1}(E)$ near $\partial K_0$ and the horizontal and vertical projections of the $l$ Lyapunov orbits near the saddle-centers. Denote by $S_E$ the subset of $H^{-1}(E)$ projecting to $K_E$. Let $\pi_i(x_1,x_2):=x_i,i=1,2,$ be the projection on each factor.

\begin{proposition}\label{prop_dec_dyn_conv}
    If $\pi_i(K_0 \setminus \partial K_0)\subset \R$ contains no maximum of $V_i$, i=1,2, then 
    \begin{itemize}
        \item[(i)] $S_0 \setminus \{p_1,\ldots,p_l\}$ is dynamically convex.
        \item[(ii)] For every $E>0$ sufficiently small, every periodic orbit on $S_E,$ which is not a Lyapunov orbit around $p_i,\ldots,p_l,$  has index $\geq 3$.
    \end{itemize}
\end{proposition}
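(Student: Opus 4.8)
The plan is to reduce both statements to the classification of low-index periodic orbits furnished by Proposition \ref{prop_decoupled1}. By that proposition every non-constant periodic orbit of the decoupled system has index $\geq 2$, and it has index exactly $2$ precisely when it is of the frozen form $\{(\bar x_1,0)\}\times\gamma_2$ with $\bar x_1$ a nondegenerate maximum of $V_1$ and $\gamma_2$ a nontrivial periodic orbit of $H_2$, or symmetrically $\gamma_1\times\{(\bar x_2,0)\}$ with $\bar x_2$ a maximum of $V_2$. Hence, to certify $\mu\geq 3$ for a given orbit it suffices to exclude that it is \emph{frozen at a maximum}. Throughout I treat the case in which the first factor is frozen; the other case follows by interchanging the roles of the indices $1$ and $2$.

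For part (i), let $P\subset S_0\setminus\{p_1,\dots,p_l\}$ be a periodic orbit. Under the standing assumptions the only rest points of $H$ in $S_0$ are the saddle-centers $p_i$ (these are the only critical points of $V$ with $V=0$ inside $K_0$), so $P$ is non-constant and by Proposition \ref{prop_decoupled1} has index $\geq 2$, with equality only if $P=\{(\bar x_1,0)\}\times\gamma_2$ for a maximum $\bar x_1$ of $V_1$. Writing $E_2=H_2(\gamma_2)$, the energy constraint on $S_0\subset H^{-1}(0)$ gives $V_1(\bar x_1)+E_2=0$, so for every interior point $x_2$ of the $x_2$-interval swept by $\gamma_2$ one has $V_2(x_2)<E_2$ and hence
\[
V(\bar x_1,x_2)=V_1(\bar x_1)+V_2(x_2)<V_1(\bar x_1)+E_2=0 .
\]
Since $P\subset S_0$ projects into $K_0$ and $V<0$ exactly on $K_0\setminus\partial K_0$, the point $(\bar x_1,x_2)$ lies in $K_0\setminus\partial K_0$, whence $\bar x_1\in\pi_1(K_0\setminus\partial K_0)$, contradicting the hypothesis that this set contains no maximum of $V_1$. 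Thus no index-$2$ orbit survives and $S_0\setminus\{p_1,\dots,p_l\}$ is dynamically convex.

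For part (ii), I would argue by contradiction and compactness. Suppose there are energies $E_n\to 0^+$ and periodic orbits $\gamma^n\subset S_{E_n}$, none a Lyapunov orbit, with $\mu(\gamma^n)\leq 2$. By Proposition \ref{prop_decoupled1} each $\gamma^n$ is frozen; after passing to a subsequence we may assume they are all frozen in the first factor, $\gamma^n=\{(\bar x_1^n,0)\}\times\gamma_2^n$, and, as $V_1$ has finitely many critical points, that $\bar x_1^n\equiv\bar x_1$ is a fixed maximum of $V_1$. The projection of $\gamma^n$ is the vertical segment $\{\bar x_1\}\times[x_2^{n,-},x_2^{n,+}]\subset K_{E_n}$, and since $K_{E_n}\to K_0$ in the Hausdorff topology this segment converges to $\{\bar x_1\}\times[x_2^-,x_2^+]\subset K_0$. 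If the limit segment were nondegenerate, its interior points would satisfy $V<0$ exactly as in part (i), hence lie in $K_0\setminus\partial K_0$, again forcing $\bar x_1\in\pi_1(K_0\setminus\partial K_0)$ and contradicting the hypothesis. So the segment degenerates, $x_2^{n,\pm}\to\bar x_2$, i.e.\ $\gamma_2^n$ shrinks to an equilibrium $(\bar x_2,0)$ of $H_2$. A nontrivial one-dimensional periodic orbit can only collapse onto a minimum, so $\bar x_2$ is a minimum of $V_2$; passing to the limit in $V_1(\bar x_1)+H_2(\gamma_2^n)=E_n$ gives $V_1(\bar x_1)+V_2(\bar x_2)=0$, so $(\bar x_1,\bar x_2)\in V^{-1}(0)\cap K_0$ is a critical point of $V$ with $\bar x_1$ a maximum of $V_1$ and $\bar x_2$ a minimum of $V_2$, hence a saddle. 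Therefore $(\bar x_1,\bar x_2)=v_i$ for some $i$, and $\gamma^n$ is a small index-$2$ orbit collapsing onto $p_i$. Since the Lyapunov orbit $P_{2,E}^i$ is the unique index-$2$ orbit in the neck of $p_i$ for small $E$ (in the decoupled picture it is exactly $\{(\bar x_{i,1},0)\}\times\gamma_{i,2,E}$), we get $\gamma^n=P_{2,E_n}^i$, contradicting that $\gamma^n$ is not a Lyapunov orbit.

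The main obstacle is part (ii): converting the global hypothesis on $\pi_i(K_0\setminus\partial K_0)$ into control of index-$2$ orbits on the nearby surfaces $S_E$. The delicate point is the dichotomy between a nondegenerate limiting segment, ruled out exactly as in (i), and a degenerate one, where the real content is that the collapse can only occur onto a saddle of $V$ and that the collapsing orbit must coincide with the Lyapunov orbit. The remaining steps are the elementary sign estimate used in part (i) together with the Hausdorff convergence $K_E\to K_0$.
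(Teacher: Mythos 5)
Your proof is correct and follows essentially the same route as the paper: both parts reduce to the classification of index-$2$ orbits in Proposition \ref{prop_decoupled1} and then use the sign of $V$ on the projected segment to contradict the hypothesis on $\pi_i(K_0\setminus\partial K_0)$. For part (ii) the paper only remarks that the argument of (i) carries over using the isolation of the nondegenerate critical points; your compactness argument, including the dichotomy between a nondegenerate limit segment and a collapse onto a saddle-center (where the orbit must be the Lyapunov orbit), is a faithful and more detailed rendering of that sketch.
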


\begin{remark}
The dynamical convexity in Proposition \ref{prop_dec_dyn_conv} also holds if $V_1$ and $V_2$ have degenerate critical points.
\end{remark}
\begin{proof}[Proof of Proposition \ref{prop_dec_dyn_conv}]
Assume by contradiction that $\gamma=\{(\bar x_1,0)\}\times \gamma_2$ is an index-$2$ periodic orbit in $S_0\setminus \{p_1,\ldots,p_l\}$, where $\bar x_1$ is a maximum of $V_1$ and $\gamma_2$ is a non-trivial periodic orbit of $H_2$. Since $V(\bar x_1, \pi_2 (  \gamma_2(t))) = V_1(\bar x_1) + V_2(\pi_2(\gamma_2(t)))\leq 0, \forall t,$ we see that  $V(\bar x_1, \pi_2(\gamma_2))$ attains a minimum $<0$ and thus $(\bar x_1,\pi_2(\gamma_2))$ contains points in the interior of $K_0$. Hence $\bar x_1$ is in the interior of $\pi_1(K_0\setminus \partial K_0),$ a contradiction. The same holds if $\gamma=\gamma_1(t)\times \{ (\bar x_2,0)\}$, where $\bar x_2$ is a maximum of $V_2$ and $\gamma_1$ is a non-trivial periodic orbit of $H_1$. This proves (i). 

The proof of (ii) follows the same argument as in (i) with the remark that every critical point of $V_1$ or $V_2$ is non-degenerate and thus isolated, and thus no index-$2$ periodic orbit on $S_E$ exists other than the Lyapunov orbits around $p_1,\ldots,p_l$. 
\end{proof}

We shall use Proposition \ref{prop_dec_dyn_conv} to obtain a weakly convex foliation on $H^{-1}(E)$.

\begin{theorem}\label{thm:decou}
    Assume that conditions in Proposition \ref{prop_dec_dyn_conv} hold. Then, for every $E>0$ sufficiently small, $S_E$ admits a weakly convex foliation whose binding is formed by the Lyapunov orbits near $p_1,\ldots,p_l$ and an index-$3$ orbit in $S_E \setminus \partial S_E$. 
\end{theorem}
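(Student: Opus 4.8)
The plan is to deduce Theorem \ref{thm:decou} directly from Theorem \ref{main_theorem}, exploiting the remark that the conclusions of that theorem remain valid under the weaker pair of hypotheses H1 and H3, both of which are already furnished by Proposition \ref{prop_dec_dyn_conv}. In other words, the work of this proof has essentially been done in Proposition \ref{prop_dec_dyn_conv}, and what remains is to match hypotheses.

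First I would check that the standing assumptions of Theorem \ref{main_theorem} are in force. The geometric configuration is part of the present setup: $0$ is a critical value of $H$, the Hill region $\{V\le 0\}$ contains the disk-like compact set $K_0$ whose boundary meets $V^{-1}(0)$ precisely at the $l$ saddles $v_1,\dots,v_l$, and $S_0\subset H^{-1}(0)$ is the corresponding singular sphere-like subset with saddle-centers $p_i=(v_i,0)$. Taking $V_1$ and $V_2$ real-analytic makes $V=V_1+V_2$ real-analytic, so the regularity hypothesis of Theorem \ref{main_theorem} holds as well.

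Next I would supply the two dynamical hypotheses from Proposition \ref{prop_dec_dyn_conv}. Its part (i) asserts that $S_0\setminus\{p_1,\dots,p_l\}$ is dynamically convex, which is exactly condition H1. Its part (ii) asserts that for all sufficiently small $E>0$ every periodic orbit in $S_E$ other than the Lyapunov orbits has index at least $3$; since $S_E$ is the component of $H^{-1}(E)$ projecting to $K_E$ and $K_E\to K_0$ in the Hausdorff topology as $E\to 0^+$, this is precisely condition H3, namely that $H^{-1}(E)$ carries no index-$2$ orbit near $S_0$ other than the Lyapunov orbits. With H1 and H3 verified, the remark following Theorem \ref{main_theorem} produces, for every sufficiently small $E>0$, a weakly convex foliation of $S_E$ whose binding consists of the $l$ Lyapunov orbits near the $p_i$ together with a single index-$3$ orbit $P_{3,E}\subset S_E\setminus\partial S_E$, which is the assertion.

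Thus the proof is a matter of invoking existing results rather than of new analysis, and the only point deserving care is the identification in the previous paragraph of Proposition \ref{prop_dec_dyn_conv}(ii) with condition H3, i.e.~confirming that for small $E$ the periodic orbits of $S_E$ and the index-$2$ orbits of $H^{-1}(E)$ lying near $S_0$ form the same collection. This follows from the Hausdorff convergence $K_E\to K_0$ together with the hyperbolicity of the Lyapunov orbits, which prevents any other short orbit from collapsing onto a saddle-center. For the specific decoupled systems treated in the remainder of this section, one can moreover read off $P_{3,E}$ and the regular leaves explicitly from the gradient flow lines of the subsystem potentials $V_1$ and $V_2$, giving a concrete description of the foliation beyond the abstract existence statement.
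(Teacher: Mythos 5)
Your proposal is correct and coincides with the paper's (implicit) argument: the paper states Theorem \ref{thm:decou} without a separate proof precisely because it is the remark following Theorem \ref{main_theorem} (conclusions valid under H1 and H3) combined with Proposition \ref{prop_dec_dyn_conv}, whose parts (i) and (ii) supply H1 and H3 exactly as you describe. Your side remarks — the identification of Proposition \ref{prop_dec_dyn_conv}(ii) with H3 via the Hausdorff convergence $K_E\to K_0$, and the need to assume $V_1,V_2$ real-analytic to match the regularity hypothesis of Theorem \ref{main_theorem} — are the right points to flag and are treated at the same level of detail as in the paper itself.
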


\subsection{Frozen Hill's lunar problem with  centrifugal force}

This problem is a mechanical system introduced in \cite{CFvK17} whose  Hamiltonian in canonical coordinates $(q=q_1+iq_2,p=p_1+ip_2)$  is given by
$$
H_0(q,p)= \frac{|p|^2}{2} -\frac{1}{|q|} -\frac{3}{2}q_1^2 - \frac{1}{2}(q_1^2+q_2^2).
$$
%An interesting feature is that this problem has the same Hill regions as Hill's lunar problem. 
Using Levi-Civita coordinates $(q=v^2,p=\frac{u}{2\bar v})$, we obtain the regularized Hamiltonian
$$
K(v,u) =4|v|^2 (H_0 +c)=\frac{|u|^2}{2}-2|v|^2\left( 3(v_1^2-v_2^2)^2+|v|^4-2c\right)-4,
$$
and $K^{-1}(0)$ doubly covers the regularized $H_0^{-1}(-c)$. Let us assume that $c>0$. Defining 
$u=c^{3/4}y$ and $v=c^{1/4}x$, we obtain the parameter-free Hamiltonian 
\begin{equation}\label{potential_frozen}
H(x,y) = \frac{1}{c^{3/2}}(K(v,u)+4)= \frac{|y|^2}{2} +4x_1^2-8x_1^6+4x_2^2-8x_2^6,
\end{equation}
and $H^{-1}(4/c^{3/2})$ still doubly covers the regularized $H_0^{-1}(-c).$

The potentials $V_1(x_1) = 4x_1^2-8x_1^6$ and $V_2(x_2) = 4x_2^2-8x_2^6$ have a minimum at $0$ and two nondegenerate maxima at $\pm 6^{-1/4}$. Hence $V(x_1,x_2) = V_1(x_1)+V_2(x_2)$ has four saddle-points $v_{1,2}=(\pm 6^{-1/4}, 0)$ and $v_{3,4}=(0, \pm 6^{-1/4})$ at level $e_*=\frac{4}{3}\sqrt{\frac{2}{3}}$. The sphere-like singular subset $S_{e_*}\subset H^{-1}(e_*)$ contains four saddle-center equilibrium points $p_i$ projecting to $v_i$, $i=1,2,3,4.$ The projection of $S_{e_*}$ to the $x$-plane is the disk-like compact domain $K_{e_*}$ with $\Z_4$-symmetry under the $\frac{\pi}{2}$-rotation, see Figure \ref{fig:frozen}. The $x_1$ and $x_2$-projections of $K_{e_*}$ lie in the interval $(-6^{1/4},6^{1/4})$, and hence contain no maximum point of $V_1$ and $V_2,$ respectively. For $E-e_*>0$ small, denote by $K_E\subset \R^2$ the disk-like subset of the Hill region $\{V\leq E\}$ near $K_{e_*}$ as in section \ref{subsec:decoupled}. Denote by $S_E$ the subset of $H^{-1}(E)$ projecting to $K_E$. By  Theorem \ref{thm:decou}, we obtain the following application.

\begin{proposition}
    For every $E-e_*>0$ sufficiently small, the energy surface $H^{-1}(E)$ admits a weakly convex foliation on $S_E$ with $l=4$. The binding orbits are the Lyapunov orbits around $p_1,\ldots,p_4$  and an index-$3$ orbit projecting to $K_E  $.  
\end{proposition}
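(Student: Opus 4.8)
The plan is to obtain this proposition as a direct application of Theorem \ref{thm:decou}, reducing the whole task to checking that the decoupled Hamiltonian \eqref{potential_frozen} meets the hypotheses of Proposition \ref{prop_dec_dyn_conv} at the critical value $e_*$. First I would record the one-dimensional data: for $V_1(t)=V_2(t)=4t^2-8t^6$ one has $V_i'(t)=8t(1-6t^4)$, so the only critical points are a nondegenerate minimum at $t=0$ with $V_i(0)=0$ and nondegenerate maxima at $t=\pm 6^{-1/4}$ with common value $V_i(\pm 6^{-1/4})=\tfrac{8}{3\sqrt 6}=\tfrac{4}{3}\sqrt{\tfrac{2}{3}}=e_*$; since $V_i(t)\to -\infty$ as $|t|\to\infty$, this is the global maximum of each $V_i$. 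Hence the four critical points of $V=V_1+V_2$ on the level $e_*$ are exactly $v_{1,2}=(\pm 6^{-1/4},0)$ and $v_{3,4}=(0,\pm 6^{-1/4})$, each of saddle type since it pairs a maximum of one factor with a minimum of the other, and the corresponding $p_i=(v_i,0)$ are saddle-centers of $H$.

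Next I would pin down the geometry of the component $K_{e_*}$ of $\{V\le e_*\}$ containing the origin. Using that $V_1,V_2$ are even and strictly monotone on $[0,6^{-1/4}]$, the first-quadrant arc of $V^{-1}(e_*)$ solving $V_1(x_1)+V_2(x_2)=e_*$ is a strictly decreasing curve joining $(0,6^{-1/4})$ to $(6^{-1/4},0)$; together with its three reflected copies it closes up into a Jordan curve bounding a compact, disk-like curvilinear square whose four vertices are exactly the saddles $v_1,\dots,v_4$, with $V<e_*$ on the interior. This identifies $K_{e_*}$ as the projection of a singular sphere-like set $S_{e_*}\subset H^{-1}(e_*)$ of the type required in Theorem \ref{main_theorem}, and in particular gives $K_{e_*}\subset[-6^{-1/4},6^{-1/4}]^2$ with the sides touched only at the vertices.

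The decisive step, and the only one requiring genuine care, is verifying the hypothesis of Proposition \ref{prop_dec_dyn_conv}, namely that $\pi_i(K_{e_*}\setminus\partial K_{e_*})$ contains no maximum of $V_i$. Because $x_1$ decreases monotonically from $6^{-1/4}$ to $0$ along each first-quadrant boundary arc, the value $x_1=6^{-1/4}$ is attained on $\partial K_{e_*}$ only at the vertex $v_1$, so the open projection $\pi_1(K_{e_*}\setminus\partial K_{e_*})$ equals $(-6^{-1/4},6^{-1/4})$ and excludes the maxima $\pm 6^{-1/4}$; the case of $\pi_2$ is symmetric. The mild subtlety I expect here is to exclude an interior point with $x_1=6^{-1/4}$ off the $x_1$-axis: such a point would force $V_1(x_1)=e_*$ and hence $V_2(x_2)<0$, so $|x_2|>2^{-1/4}>6^{-1/4}$, contradicting the bound $|x_2|<6^{-1/4}$ already established for the interior of $K_{e_*}$.

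With the hypotheses of Proposition \ref{prop_dec_dyn_conv} in force, $S_{e_*}\setminus\{p_1,\dots,p_4\}$ is dynamically convex and, for $E-e_*>0$ small, the four Lyapunov orbits are the only index-$2$ orbits on $S_E$. Theorem \ref{thm:decou} then yields the weakly convex foliation with $l=4$ whose binding consists of these four Lyapunov orbits together with a single index-$3$ orbit in $S_E\setminus\partial S_E$ projecting to $K_E$, which is the assertion. The $\Z_4$-symmetry $(x_1,x_2)\mapsto(-x_2,x_1)$ of $H$ is consistent with, but not needed for, this argument.
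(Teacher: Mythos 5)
Your proposal is correct and follows essentially the same route as the paper: compute the critical data of the one-dimensional factors $V_1=V_2=4t^2-8t^6$, identify the four saddles of $V$ at level $e_*$, check that $\pi_i(K_{e_*}\setminus\partial K_{e_*})$ avoids the maxima $\pm 6^{-1/4}$ of $V_i$, and then invoke Theorem \ref{thm:decou} (via Proposition \ref{prop_dec_dyn_conv}). You merely spell out the projection check, which the paper asserts in one line (and where the paper's interval $(-6^{1/4},6^{1/4})$ is evidently a typo for $(-6^{-1/4},6^{-1/4})$, as your argument makes clear).
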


\begin{figure}
\centering
\begin{subfigure}{.5\textwidth}
  \centering
  \includegraphics[width=1\linewidth]{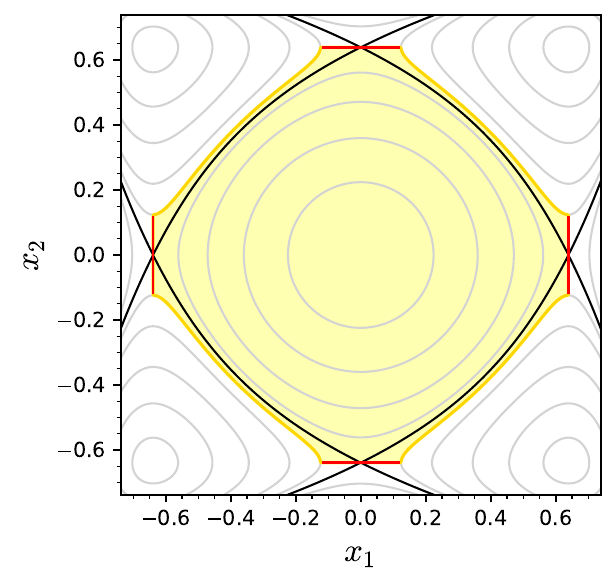}
%  \caption{A subfigure}
  \label{fig:sub1}
\end{subfigure}%
\begin{subfigure}{.5\textwidth}
  \centering
  \includegraphics[width=0.9\linewidth]{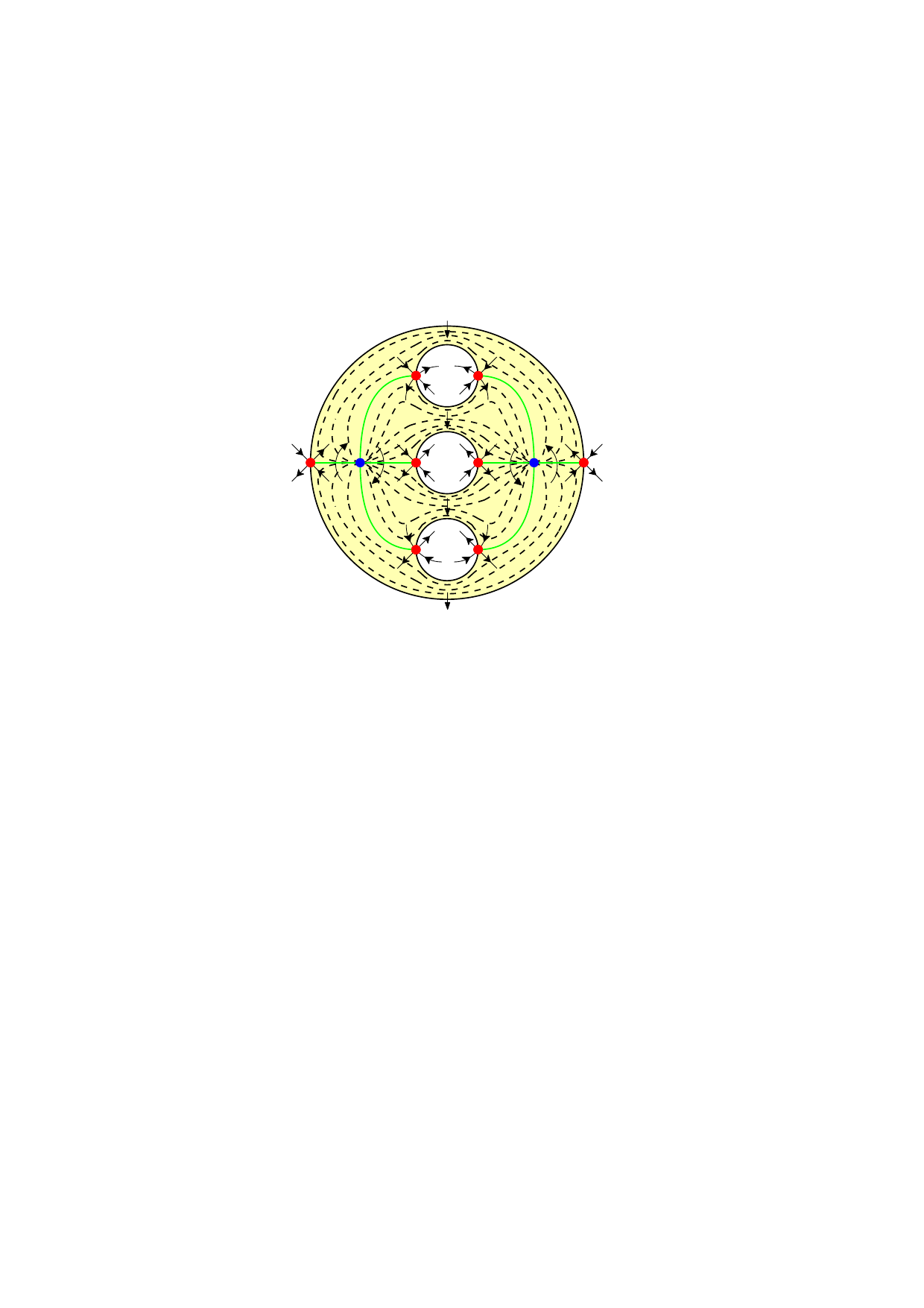}
%  \caption{A subfigure}
  \label{fig:sub2}
\end{subfigure}
\caption{The frozen Hill's lunar problem with centrifugal force for energies above the critical value. The projection $K_E$ of $S_E\subset H^{-1}(E)$ to the Hill region (left). $S_E$ is foliated by a weakly convex foliation with $l=4$ (right).}
\label{fig:frozen}
\end{figure}

\subsection{The Stark problem}
   
In \cite{CFvK17}, Cieliebak, Frauenfelder, and van Koert introduced a large class of Hamiltonian systems called Stark systems, which model the motion of an electron attracted by a proton subject to an external electric field.   A planar Stark system is a  mechanical system whose potential has the form 
$U= -\frac{1}{|q|} + U_0(q),$ where $U_0$ is smooth near $q=0$. We assume that   there exists $a>0$ such that $U_0(\nu^{2a} q) = \nu U_0(q)$ for every $q$ and $\nu>0$. 
The frozen Hill's lunar problem studied in the previous section is a Stark system with $a=1/4$. As before, the Hamiltonian $H_0=|p|^2/2 + U(q)$ can be regularized using adapted Levi-Civita coordinates $q=c^{1/2}x^2, p=c^{1/2}\frac{y}{2\bar x}$, $c>0$, and becomes 
$$
H(x,y) = \frac{|y|^2}{2} + V(x),\quad \mbox{ where } \quad 
V(x) = 4|x|^2(1+U_0(x^2)).
$$
The energy surface  $H^{-1}(4/c^{3/2})$ doubly covers the regularized $H_0^{-1}(-c)$.

  The problem studied by Stark  \cite{Stark1914}, known as the Stark problem, is the case of a constant electric field, that is $U_0(x) =  - \varepsilon x_1, \varepsilon >0$ ($a=1/2$). Many researchers have extensively investigated this problem in a variety of contexts. We refer the reader to  \cite{Isayev72, Lantoine09} and \cite{Barratt83, Hezel92}, where the Stark problem is studied in orbital and quantum mechanics, respectively.  We also refer to  \cite{Stark_Urs} in which the author studies the Stark problem in the flavor of symplectic geometry.
    
  The potential $V$ of the Stark problem is given by
$$
V(x_1,x_2)=V_1(x_1)+V_2(x_2) =4(x_1^2-\varepsilon x_1^4)+   4(x_2^2+  \varepsilon x_2^4). 
$$
Notice that $V$ admits a minimum at $(0,0)\in V^{-1}(0)$ and two saddles at $v_{1,2}=(\pm (2\varepsilon)^{-1/2},0)$ with critical value
$e_*=\varepsilon^{-1}>0$. The sub-level set $\{V\leq e_*\}$ contains a disk-like compact domain $K_{e_*}\subset \{V\leq e_*\}$  whose boundary contains the saddles $v_1$ and $v_2$. The compact $K_{e_*}$ is the projection of a singular sphere-like subset of $H^{-1}(e_*)$ with two saddle-centers at $p_i=(v_i,0),i=1,2.$ For each $E-e_*>0$, denote by $S_E$ the subset of $H^{-1}(E)$ whose projection to the $x$-plane is the disk-like region $K_E$ bounded by the projections of the Lyapunov orbits around $p_1,p_2$ and arcs in $V^{-1}(E)$. 

The hypotheses of Proposition \ref{prop_dec_dyn_conv} are trivially satisfied. However, since $V_2$ has a unique critical point (a nondegenerate global minimum), it is easier to construct the transverse foliation on $H^{-1}(E)$ using the gradient flow lines of $H_1$. For $0< E< e_*,$ $H^{-1}(E)$ contains a regular sphere-like hypersurface $S_E$ admitting an open book decomposition whose pages are disk-like global surfaces of section. The binding orbit is the one projecting to $x_1=y_1=0$, and the planes are the pre-images of the gradient flow lines of $H_1$ under the projection to the $x_1y_1$-plane, see Figure \ref{fig:stark}. The singular sphere-like subset $S_{e_*}$ has two singularities at $p_1,p_2$. A similar open book exists for $S_{e_*}.$ For $E>e_*$, $H^{-1}(E)$ admits a weakly convex foliation whose Lyapunov orbits are part of the binding, as in the following proposition. 

\begin{figure}[t]
\centering
\begin{subfigure}{.5\textwidth}
  \centering
  \includegraphics[width=1\linewidth]{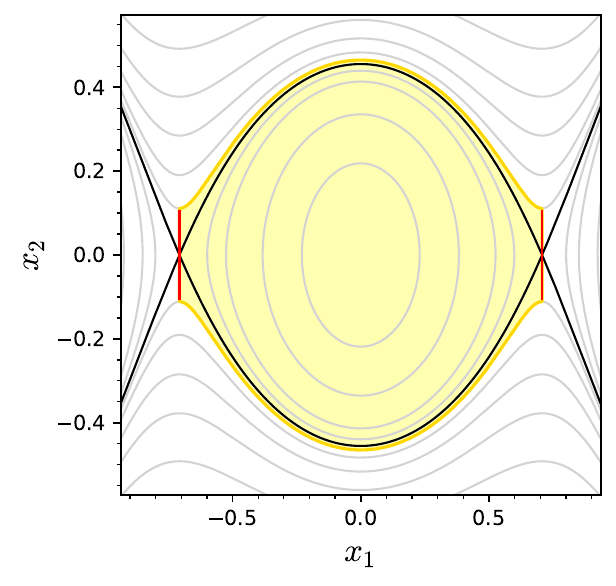}
%  \caption{A subfigure}
  \label{fig:sub1c}
\end{subfigure}%
\begin{subfigure}{.5\textwidth}
  \centering
  \includegraphics[width=1\linewidth]{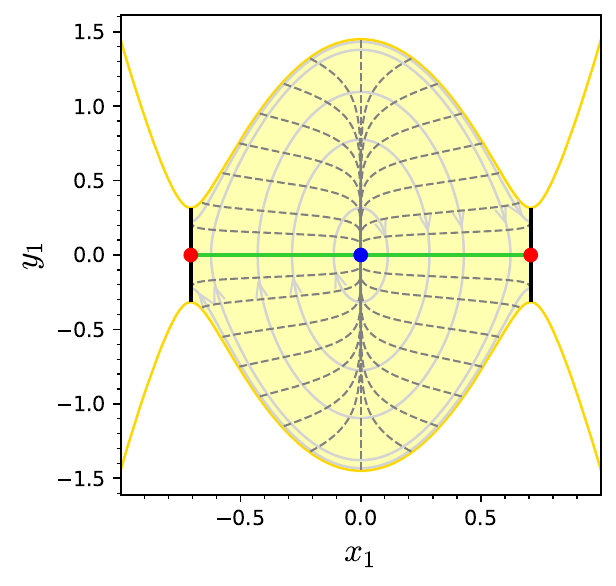}
%  \caption{A subfigure}
  \label{fig:sub2b}
\end{subfigure}
\caption{The Stark problem for energies above the critical value. The energy surface $H^{-1}(E)$ contains a sphere-like hypersurface $S_E$ admitting a weakly convex foliation with $l=2$, see Figure \ref{fig:weak}. The projection $K_E$ of $S_E$ to the Hill region (left). The binding is formed by the Lyapunov orbits and the orbit projecting to $(x_1,y_1)=(0,0)$; its index is $\geq 3$. The regular leaves project to the gradient flow lines of $H_1$ (right).}
\label{fig:stark}
\end{figure}

\begin{proposition} \label{prop:Starkconvex}
For every $E>e_*$, the energy surface $H^{-1}(E)$ admits a weakly convex foliation on $S_E$. The binding orbits are the two Lyapunov orbits around $p_1,p_2$  and an index-$3$ orbit projecting to $K_E,$ and the regular leaves project to the gradient flow lines of $H_1$.  
 \end{proposition}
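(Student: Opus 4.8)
The strategy is to construct the foliation explicitly from the gradient flow of the subsystem $H_1$, appealing to Theorem \ref{thm:decou} only to pin down the indices, rather than re-running the holomorphic-curve machinery. The construction exploits that $V_2$ has a single nondegenerate minimum (at $0$, with $V_2(0)=0$): every regular level $H_2^{-1}(E_2)$, $E_2>0$, is a single embedded circle in the $x_2y_2$-plane, and $H_2^{-1}(0)=\{(0,0)\}$. First I would record that the hypotheses of Proposition \ref{prop_dec_dyn_conv} hold, since $\pi_1(K_0\setminus\partial K_0)=(-(2\varepsilon)^{-1/2},(2\varepsilon)^{-1/2})$ avoids the maxima $\pm(2\varepsilon)^{-1/2}$ of $V_1$ and $V_2$ has no maximum at all. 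Hence Theorem \ref{thm:decou} already produces a weakly convex foliation of $S_E$ whose binding is the two Lyapunov orbits $P_{2,E}^1,P_{2,E}^2$ around $p_1,p_2$ together with one index-$3$ orbit; what remains is to exhibit a representative whose regular leaves project to gradient lines of $H_1$.

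Next I would describe the base of the construction. Projecting $S_E$ to the $x_1y_1$-plane gives the topological disk $D_E:=\{(x_1,y_1):|x_1|\le(2\varepsilon)^{-1/2},\ H_1(x_1,y_1)\le E\}$, because over any such $(x_1,y_1)$ the fibre is the full circle $H_2^{-1}(E-H_1(x_1,y_1))$ (degenerating to a point when $H_1=E$) and the brake condition $(x_1,x_2)\in K_E$ is then automatic. On $D_E$ the $H_1$-gradient flow has a minimum at the centre $(0,0)$ and two hyperbolic saddles at the Lyapunov feet $(\pm(2\varepsilon)^{-1/2},0)$, whose unstable manifolds are exactly the vertical boundary segments $\{x_1=\pm(2\varepsilon)^{-1/2}\}$ and whose stable manifolds are the two axis segments joining them to the centre, while the arcs $\{H_1=E\}$ close up $\partial D_E$. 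I would then define the leaves of $\F_E$ to be the preimages of the gradient lines of $H_1$, each fibred by the shrinking circles $H_2^{-1}(E-H_1)$. Reading off the fibres matches the pieces of Definition \ref{def:weakly_convex_foliation}: the fibre over the centre is the interior binding orbit $P_{3,E}=\{(0,0)\}\times\gamma_{2,E}$ (with $\gamma_{2,E}$ the $H_2$-orbit of energy $E$); the lifts of the two boundary segments are the rigid plane pairs $U_{1,E}^j,U_{2,E}^j$ forming the separating spheres $\mathcal{S}_j$ around $P_{2,E}^j$; the lifts of the two axis separatrices are the cylinders $V_E^j$ joining $P_{3,E}$ to $P_{2,E}^j$; and the lifts of the remaining gradient lines, running from the centre out to $\{H_1=E\}$ and capping off to a point as the fibre collapses, are the two families of planes $D_{\tau,E}^j$ asymptotic to $P_{3,E}$.

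The transversality in Definition \ref{def:weakly_convex_foliation}(iii) is the one genuinely clean point. Writing the Hamiltonian vector field as $X_H=X_{H_1}\oplus X_{H_2}$ under the splitting $\R^2_{x_1y_1}\oplus\R^2_{x_2y_2}$, the fibre tangent $X_{H_2}$ lies in every leaf, while the base component $X_{H_1}=(y_1,-V_1'(x_1))$ satisfies $\langle X_{H_1},\nabla H_1\rangle=0$, so $X_{H_1}$ is tangent to the level sets of $H_1$ and hence transverse to every gradient line. Away from the critical points of $H_1$, i.e. off the projections of the binding orbits, this forces $X_H$ to be transverse to the leaves, giving (iii) directly.

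The main work, and the expected obstacle, is the behaviour at the critical points of the $H_1$-gradient flow, where $X_{H_1}$ vanishes and the fibration degenerates. There one must verify the asymptotic convergence of the plane and cylinder ends to the binding orbits and, above all, the breaking of the families $D_{\tau,E}^j$ as $\tau\to0^+,1^-$: a gradient line leaving the centre nearly along the axis runs into a saddle along its stable manifold and veers off along the unstable (boundary) manifold, so in the limit $D_{\tau,E}^j$ converges to $\overline{U_{1,E}^j\cup V_E^j}$ on one side and to $\overline{U_{2,E}^{j+1}\cup V_E^{j+1}}$ on the other, exactly as required. Establishing this breaking uniformly, together with confirming that the central orbit has index exactly $3$ — it is $\ge 3$ by Proposition \ref{prop_decoupled1}(i), and since our configuration has a single interior binding orbit it must be the index-$3$ orbit furnished by Theorem \ref{thm:decou} — is the delicate part; by comparison the transversality and the global combinatorics of the leaf space are routine.
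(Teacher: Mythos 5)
Your construction is the one the paper intends: the leaves are the preimages, under the projection of $S_E$ to the $x_1y_1$-plane, of the gradient lines of $H_1$, with the vertical segments $\{x_1=\pm(2\varepsilon)^{-1/2}\}$ lifting to the rigid spheres around the Lyapunov orbits, the axis separatrices lifting to the rigid cylinders, and the generic gradient lines lifting to the two families of planes; the transversality argument via the splitting $X_H=X_{H_1}\oplus X_{H_2}$ and the breaking analysis at the saddles are exactly what the paper's (very terse) discussion and the remark following the proposition have in mind. Up to that point your write-up is correct and in fact more detailed than the paper's.

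There is, however, one genuine gap: the claim that the interior binding orbit $P_{3,E}=\{(0,0)\}\times H_2^{-1}(E)$ has index exactly $3$. Proposition \ref{prop_decoupled1}(i) only gives $\mu\geq 3$, and your way of upgrading this --- ``since our configuration has a single interior binding orbit it must be the index-$3$ orbit furnished by Theorem \ref{thm:decou}'' --- does not work: the foliation produced by Theorem \ref{thm:decou} comes from the holomorphic-curve machinery and is a priori a different foliation, so there is no reason its interior binding orbit should coincide with the central orbit of your explicit construction. If $\mu(P_{3,E})$ were $\geq 5$, your gradient-line construction would still yield a transverse foliation, but not a weakly convex one in the sense of Definition \ref{def:weakly_convex_foliation}, and the proposition as stated would fail. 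The fix is a short direct computation: the transverse linearized flow along $P_{3,E}$ is the linear flow of $\ddot z=-V_1''(0)\,z$ over one period $T$ of $H_2^{-1}(E)$, so in the constant frame $\{\partial_{x_1},\partial_{y_1}\}$ it rotates by $\sqrt{V_1''(0)}\,T$. Since $V_2(x_2)=4x_2^2+4\varepsilon x_2^4$ is a hardening potential ($V_2'(x)/x$ is increasing in $|x|$), the period satisfies $T<2\pi/\sqrt{V_2''(0)}=2\pi/\sqrt{8}=2\pi/\sqrt{V_1''(0)}$, so the rotation is strictly less than one full turn; together with the $+1$ relative winding of the quaternion frame used in the proof of Proposition \ref{prop_decoupled1}, this places the interval $I$ inside $(1,2)$ and gives $\mu(P_{3,E})=3$ exactly. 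Note that this uses the special feature $V_1''(0)=V_2''(0)$ of the regularized Stark potential; it is not automatic for a general decoupled system.
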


\begin{remark} In general, for a decoupled potential $V=V_1(x_1)+V_2(x_2)$, where $V_2$ is a coercive function with a single critical point (a global minimum), a transverse foliation whose regular leaves are planes and cylinders can be obtained from the gradient flow lines of $H_1=y_1^2/2+V_1(x_1)$. This follows from the fact that the trajectories of $H_2=y_2^2/2+V_2(x_2)$ are embedded circles around the minimum, and the inverse image of the gradient flow lines of $H_1$ project to $(x_2,y_2)$ as annuli between those circles. For certain energy surfaces, they may determine open book decompositions whose pages are global surfaces of section. For instance, see \cite{ kim2024transverse}.
\end{remark}

\subsection{The Euler problem of two centers} The Euler problem of two fixed centers in the plane describes the motion of a massless body (the satellite)  under the influence of two fixed bodies (the primaries)  which attract the satellite according to  Newton's law of gravitation. See \cite{OM, WDR}.  Fix $0<\mu<1$ and let $E=(-1,0)\in \R^2$ and $S=(1,0)\in \R^2$ denote the primaries with respective mass $\mu$ and $1-\mu$. The potential $U_\mu$ of central force fields centered at $S$ and $E$ is given by
$$
U_\mu(x_1,x_2) = -\frac{\mu}{\sqrt{(x_1+1)^2+x_2^2}} -\frac{1-\mu}{\sqrt{(x_1-1)^2+x_2^2}},
$$
and the mechanical Hamiltonian is $H_\mu=\frac{y_1^2+y_2^2}{2} + U_\mu(x_1,x_2)$. Euler was the first to notice in 1760 that this problem is completely integrable \cite{Euler1, Euler21, Euler22}.   
In coordinates $(x_1,x_2,y_1,y_2)\in \R^4$,  one easily checks the existence of a unique rest point, given by $p_c=(\bar x_1,0,0,0)$, where
\begin{equation*}\label{eq_xbarra}
-1<\bar x_1 = \frac{2\mu-1}{1+2\sqrt{\mu-\mu^2}} <1.
\end{equation*} 
This critical point is  a saddle-center and  the corresponding critical value is equal to
\begin{equation}\label{ccrit}
c_{\rm crit}:=H_\mu(p_c)=-\frac{1}{2}-\sqrt{\mu-\mu^2}.
\end{equation}

We change coordinates $(x_1,x_2)$ to elliptic coordinates $(u,v)\in \R \times \R / 2\pi \Z,$ where 
\begin{equation*}\label{eq_change}
x_1  = \cosh u \cos v, \ \ \ \ x_2  = \sinh u\sin v,
\end{equation*}
and momenta $(y_1,y_2)$  to $(p_u,p_v)$ satisfying
$$
\begin{aligned}
y_1= \frac{p_u\sinh u \cos v - p_v\cosh u \sin v}{\cosh ^2 u - \cos^2 v},\\
y_2= \frac{p_u\cosh u \sin v + p_v\sinh u \cos v}{\cosh ^2 u - \cos^2 v}.
\end{aligned}
$$
The change of coordinates $(x,y,p_x,p_y)\mapsto (u,v,p_u,p_v)$ is symplectic up to a constant factor, and we obtain the new Hamiltonian
$$
H_\mu = \frac{1}{\cosh^2 u- \cos^2 v} \left(\frac{p_u^2 + p_v^2}{2} -\mu(\cosh u - \cos v)-(1-\mu)(\cosh u + \cos v) \right).
$$
The collisions to $S$ and $E$ correspond to $(u,v)=(0,0)$ and $(u,v)=(0,\pi)$, respectively, where the denominator in the expression above vanishes. In order to regularize such collisions, we fix $c\in \R$ and consider the regularized Hamiltonian
$$
\begin{aligned}
K_{\mu,c}   :=& \; (H_\mu - c)(\cosh^2 u- \cos^2 v)\\
  =& \;  \frac{p_u^2}{2}-\cosh u -c \cosh^2 u +\frac{p_v^2}{2} +(2\mu-1) \cos v +c\cos^2 v,
\end{aligned}
$$ where the energy $c$ is now seen as a parameter.

Except for the collision points, the dynamics on
$K_{\mu,c}^{-1}(0)$ corresponds to the dynamics on $H_\mu^{-1}(c)$ via a double covering map which identifies antipodal points
$(u,v,p_u,p_v) \sim - (u,v,p_u,p_v).$

 Denote
$\Sigma_{\mu,c}  := K_{\mu,c}^{-1}(0).$
We consider $\Sigma_{\mu,c}$ lying inside $\R \times \R / 2\pi \Z \times \R \times \R$ with coordinates $(u,v,p_u,p_v)$.  The Hamiltonian $K:=K_{\mu,c}$  is decoupled, that is 
\begin{equation}\label{eq_ham_2}
    K = K_{1}(u,p_u)+K_{2}(v,p_v),
\end{equation} 
where
$
K_{1}(u,p_u) := \frac{p_u^2}{2}-\cosh u -c \cosh^2 u
$
and
$
K_{2}(v,p_v)  := \frac{p_v^2}{2} +(2\mu-1) \cos v +c\cos^2 v
$
are first integrals of motion. Denote by $V_1(u)=-\cosh u - c \cosh^2 u$ and $V_2(v)=(2\mu-1)\cos v+c \cos^2 v$ the respective potentials of $K_1$ and $K_2$.

We consider negative energies $c<0,$ so that the motions are bounded. If $c<c_{\rm crit}$, then $\Sigma_{\mu,c}$ contains two dynamically convex sphere-like components admitting disk-like global surfaces of section. If $c_{\rm crit} < c< 0,$ then $\Sigma_{\mu,c}$ is diffeomorphic to $S^1 \times S^2$  corresponding to the connected sum of the lower components. We consider the energy values $c_1<c_2$ given by 
$$ 
-1<c_{\rm crit} <  c_1:=-\frac{1}{2} < c_2   := - \big| \mu - \frac{1}{2} \big| \leq 0.$$ Notice that $ c_2 = 0$ only for $\mu=1/2.$

For $c\leq c_1$,  $K_1$ has a unique critical point at $(0,0)\in K_1^{-1}(-1-c)$, which is a global minimum. The dynamics of $K_1$ is that of a center about $(0,0)$. For $c_1<c<0,$ $K_1$ has three critical points, a saddle at $(0,0)\in K_1^{-1}(-1-c)$ and two symmetric minima on the $u$-axis.

For $c< c_2,$ $K_2$ has $4$ critical points in $\R / 2 \pi \Z \times \{0\} \subset \R / 2\pi \Z \times \R$, two local minima at $(0,0)$ and $(\pi,0)$, and two saddles with the same critical value $-\frac{(2\mu-1)^2}{4c}$. The saddles correspond to index-$2$ hyperbolic periodic orbits, and the minima correspond to elliptic periodic orbits with index $\geq 3$. For $c_2 \leq c <0,$ $K_2$ has only $2$ critical points at $(0,0)$ and $(\pi,0)$, a minimum and a saddle, depending on the sign of $2\mu -1$. The critical value of the saddle is $|2\mu-1|+c.$ These orbits correspond to an index-$2$ hyperbolic orbit and an elliptic orbit with index $\geq 3$. Their projections to the $up_u$-plane are simple orbits enclosing the origin. 

Lifting the gradient flow lines of $K_2$ to the energy surface $\Sigma_{\mu,c} = K_{\mu,c}^{-1}(0) \simeq S^1 \times S^2$, we obtain two different types of transverse foliations: 
\begin{itemize}
    \item[I.] If $c_{\rm crit} < c<c_2,$ then $\Sigma_{\mu,c}$ admits two subsets $S_E, S_E'$ admitting a weakly convex foliation with $l=2$. The index-$2$ orbits project to the continuation of the Lyapunov orbit under the $2:1$ regularization map. The other binding orbits project to collision-brake orbits in $x_2=0$. 
    \item[II.] If $c_2\leq c <0$, the Lyapunov orbit collapses to one of the collision-brake orbits, which becomes hyperbolic after bifurcation. $\Sigma_{\mu,c}$ has only one component that admits a weakly convex foliation with $l=2$. The two binding orbits project to collision-brake orbits under the double-covering map. 
\end{itemize}
    
    The bifurcation in the $up_u$-plane at level $c_1$ is transverse to the foliation and thus does not alter its type.

\begin{figure}[t]
\centering
\begin{subfigure}{.5\textwidth}
  \centering
  \includegraphics[width=1\linewidth]{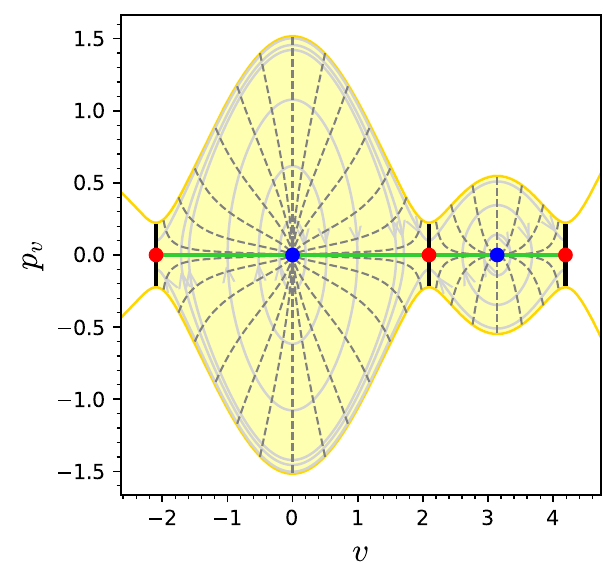}
%  \caption{A subfigure}
  \label{fig:sub1c_2}
\end{subfigure}%
\begin{subfigure}{.5\textwidth}
  \centering
  \includegraphics[width=1\linewidth]{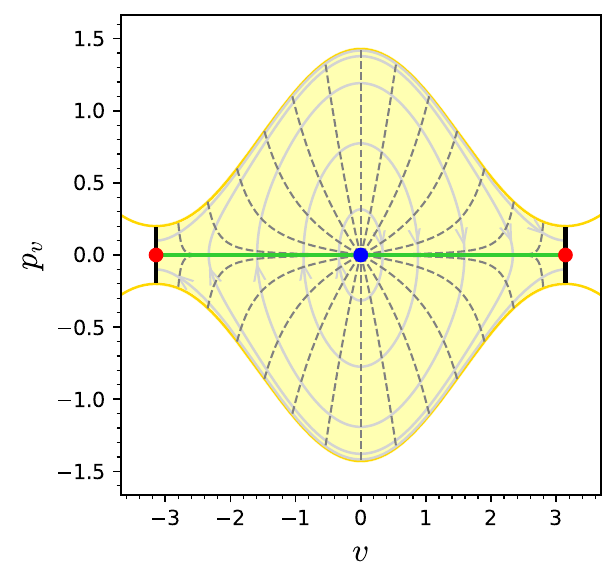}
%  \caption{A subfigure}
  \label{fig:sub2b_2}
\end{subfigure}
\caption{The Euler problem for $\mu=1/4$ and $c_{\rm crit} < c=-1/2 < c_2$ (left),  and  $c_2< c=-1/5<0$ (right). Each chamber admits a weakly convex foliation with $l=2$. The regular leaves project to the gradient flow lines of $K_2$.}
\label{fig:euler}
\end{figure}

\subsection{Chemical reaction model}

In \cite{Chemical}, the authors study a chemical reaction described by a mechanical system whose potential is
\begin{equation}\label{pome}
V(x_1,x_2) =V_1(x_1)+V_2(x_2) = -\frac{1}{2}\alpha x_1^2 + \frac{1}{4} \beta x_1^4 + \frac{1}{2} x_2^2,
\end{equation}
where $\alpha \beta >0.$ The potential $V$ has precisely three critical points at $v_0=(0,0)$  and $v_{1,2}=(\pm (\alpha/\beta)^{1/2},0)$. 
The corresponding critical values are $0$ and $-\frac{ \alpha^2}{4 \beta},$ respectively. We split it into two cases:
\begin{itemize}
\item[I.] $\alpha,\beta>0$: the critical level $H^{-1}(e_*)$,  $e_*=0$, contains two singular sphere-like subsets $S_0,S_0'$ with a singularity at the saddle-center $(v_0,0)\in \R^4$. Their projections to the $x$-plane are compact disk-like domains touching each other at  $v_0$. For $E>0$, the sphere-like hypersurface $H^{-1}(E)$ admits a weakly convex foliation whose leaves project to the gradient flow lines of $H_1$. This foliation is called a $3-2-3$ foliation, and the Lyapunov orbit at $x_1=0$ is one of the binding orbits. 

\item[II.] $\alpha,\beta<0$: the critical level $H^{-1}(e_*)$, $e_*=-\alpha^2/(4\beta)$, contains a singular sphere-like subset with two singularities at $(v_1,0)$ and $(v_2,0)$. Its projection to the $x$-plane is a compact disk-like domain with two singularities at $v_1,v_2$.  For $E-e_*>0$, the energy surface also admits a weakly convex foliation   whose leaves project to the flow lines of $H_1$. The binding includes the Lyapunov orbits at $x_1=\pm(\alpha/\beta)^{1/2}$. 
\end{itemize}

\begin{figure}[!ht]
\centering
\begin{subfigure}{.5\textwidth}
  \centering
  \includegraphics[width=1\linewidth]{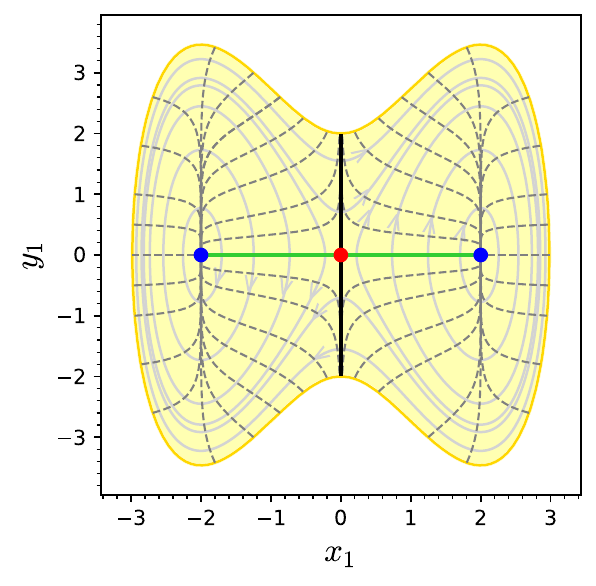}
 % \caption{}
  \label{fig:sub1d}
\end{subfigure}%
\begin{subfigure}{.5\textwidth}
  \centering
  \includegraphics[width=1\linewidth]{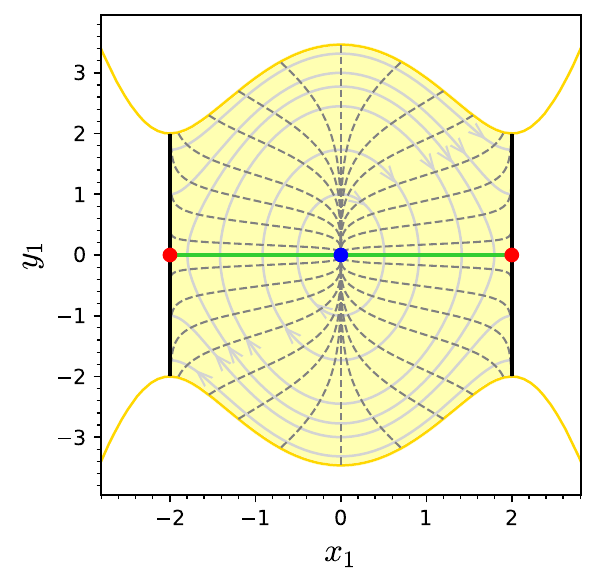}
%  \caption{}
  \label{fig:sub2d}
\end{subfigure}
\caption{ The chemical reaction model. Each leaf of the foliation is obtained as the pre-image of a gradient flow line of $V_1$. On the left (case I), $H^{-1}(E)$ admits a $3-2-3$ foliation, i.e., both sides $S_E=H^{-1}(E)\cap \{x_1\geq 0\}$ and $S_E':=H^{-1}(E) \cap \{x_1\leq 0\}$ admit a weakly convex foliation with $l=1$. On the right (case II),  $S_E= H^{-1}(E) \cap \{|x_1| \leq (\alpha/\beta)^{1/2}\}$ admits a weakly convex foliation with $l=2$.}
\label{fig:test3}
\end{figure}

\section*{Acknowledgement}

NdP was partially supported by CAPES/MATH-AMSUD 88881.878892/2023-01. SK was supported by the National Research Foundation of Korea (NRF) grant funded by the Korean government (MSIT) (No.\ RS-2025-16070003).  A part of this work was done during SK's visit to the Mathematisches Forschungsinstitut Oberwolfach (MFO) as an Oberwolfach Leibniz Fellow. SK cordially thanks the MFO for its excellent support and stimulating working atmosphere. PS acknowledges the support of the NYU-ECNU Institute of Mathematical Sciences at NYU Shanghai and the 2022 National Foreign Experts Program. 
PS was partially supported by FAPESP (2016/25053-8) and CNPq (306106/2016-7). PS was partially supported by the National Natural Science Foundation of China (grant number W2431007). PS thanks the support of the Shenzhen International Center for Mathematics - SUSTech. AS thanks the Instituto de Matemática Pura e Aplicada (IMPA) for the post-doc position. Part of this work was conducted during visits to the Southern University of Science and Technology (SUSTech) and the Kongju National University (KNU). AS thanks both institutes for their hospitality.

\bibliographystyle{plain}
\bibliography{bibliografia}

\end{document}